\newcommand{\HOX}[1]{\todo[noline, size=\footnotesize]{#1}}
\providecommand\@dotsep{5}\def\listtodoname{List of Todos}\def\listoftodos{\hypersetup{linkcolor=black}\@starttoc{tdo}\listtodoname\hypersetup{linkcolor=blue}}\makeatother
\newtheorem{lemma}{Lemma}
\newtheorem{proposition}{Proposition}
\newtheorem{theorem}{Theorem}
\newtheorem{corollary}{Corollary}
\newtheorem{definition}{Definition}
\theoremstyle{remark}
\newtheorem{remark}{Remark}
\def\V{\mbox{Var}}
\def\R\re
\def\V{\bf V}
\def\W{\mathcal{W}}
\def \re{{\mathbb R}}
\def \C{{\mathbb C}}
\def \J{{\mathbb J}}
\def \V{{\bf V}}
\def \M{{\mathcal M}}
\def \V{{\mathcal V}}
\def \g{{\mathfrak g}}
\newcommand{\id}{\mathrm{Id}}
\def\C{\mathbb C}
\def\R{\mathbb R}
\def\U{\mathbf U}
\def\P{\mathbf P}
\DeclareMathOperator{\GL}{GL}
\DeclareMathOperator{\Ad}{Ad}
\def \ba {\begin {eqnarray*} }
\def \ea {\end {eqnarray*} }
\def \beq {\begin {eqnarray}}
\def \eeq {\end {eqnarray}}
\def\g{\mathfrak g}
\def\p{\partial}
\DeclareMathOperator{\supp}{supp}
\DeclareMathOperator{\WF}{WF}
\title[Retrieving Yang--Mills--Higgs fields]{Retrieving Yang--Mills--Higgs fields in Minkowski space from active local measurements}
\author[X. Chen]{Xi Chen}
\address{Shanghai Center for Mathematical Sciences, Fudan University, Shanghai 200438, China;	Shanghai Academy of Artificial Intelligence for Science, Shanghai 200232, China;
Center for Applied Mathematics, Fudan University, Shanghai 200433, China. }
\email{xi\_chen@fudan.edu.cn}
\author[M. Lassas]{Matti Lassas}
\address{Department of Mathematics and Statistics, University of Helsinki, Helsinki FI-00014, Finland.  }
\email{Matti.Lassas@helsinki.fi}
\author[L. Oksanen]{Lauri Oksanen}
\address{Department of Mathematics and Statistics, University of Helsinki, Helsinki FI-00014, Finland.  }
\email{Lauri.oksanen@helsinki.fi}
\author[G.P. Paternain]{Gabriel P. Paternain}
\address{ Department of Pure Mathematics and Mathematical Statistics, University of Cambridge, Cambridge CB3 0WB, UK; Department of Mathematics, University
	of Washington, Seattle, WA 98195, USA.}
\email{g.p.paternain@dpmms.cam.ac.uk}
\begin{document}
\begin{abstract}  We show that we can retrieve a Yang--Mills potential and a Higgs field (up to gauge) from source-to-solution type data associated with the classical Yang--Mills--Higgs equations in Minkowski space $\mathbb{R}^{1+3}$. We impose natural non-degeneracy conditions on the representation for the Higgs field and on the Lie algebra of the structure group which are satisfied for the case of the Standard Model. Our approach exploits the non-linear interaction of waves generated by sources with values in the centre of the Lie algebra showing that abelian components can be used effectively to recover the Higgs field. 

\end{abstract}
\maketitle
\tableofcontents


\section{Introduction and overview} The goal of this paper is to solve an inverse problem for the classical Yang--Mills--Higgs equations in Minkowski space $\mathbb{R}^{1+3}$. The data for the problem arises from observations in a small set and the recovery is achieved on a causal domain where waves can propagate and return. The results here build upon our earlier work in \cite{CLOP,CLOP2} where we considered a cubic wave equation for the connection wave operator and the case of pure Yang--Mills theories, but this time we have the distinct objective of retrieving the Higgs field in addition to the connection (Yang--Mills field). The new couplings being introduced make the analysis of the non-linear interaction of waves quite complicated at first glance. The new insight is that this complication may be considerably avoided by focusing on sources taking values in the centre of the Lie algebra.

\subsection{The Yang--Mills--Higgs equations} We begin this overview by describing our setting in detail.
 Consider the trivial vector bundles $\Ad = M\times \g$
 and $E=M\times \mathcal{W}$, where \begin{itemize}
 	\item $(M,g)$ is the Minkowski space $\mathbb{R}^{1 + 3}$ with signature $(-, +, +, +)$;
 	\item $G$ is a compact connected matrix Lie group;
 	\item $\g$ is the Lie algebra of $G$ with an $\text{Ad}$-invariant inner product $\langle \cdot, \cdot \rangle_{\text{Ad}}$;
 	\item $\rho:G\to \text{GL}(\mathcal{W})$ is a complex linear representation of $G$ in the complex vector space       $\mathcal{W}$,
\item $\langle \cdot, \cdot \rangle_\mathcal W$ is a Hermitian $G$-invariant inner product on $\mathcal{W}$.
\end{itemize}
 

 We denote by $\Omega^k(M, \g)$ and $\Omega^k(M, \mathcal{W})$ the $\g$-valued and $\mathcal{W}$-valued smooth $k$-forms. In particular,  $\Omega^1(M, \g) = C^\infty(M, T^\ast M \otimes \g)$.
  Given $A\in \Omega^{1}(M,\g)$, it induces  exterior covariant derivatives in $\Ad$ and $E$ respectively,  
 \begin{eqnarray*}
	D_A : \Omega^k(M, \g) &\longrightarrow& \Omega^{k+1}(M, \g) \\  W &\longmapsto& dW + [A,   W],
	\\
	d_A : \Omega^k(M, \W) &\longrightarrow& \Omega^{k+1}(M, \W) \\  \Upsilon &\longmapsto& d\Upsilon + \rho_\ast(A) \wedge \Upsilon.
\end{eqnarray*}
The covariant derivatives $D_A$ and $d_{A}$ are compatible with the inner products in $\g$ and $\mathcal{W}$ respectively.
 The formal adjoints of $D_A$ and $d_A$ are defined through the Hodge star operator $\star$ on $(M, g)$ to be \begin{eqnarray*}
 D_A^\ast &=&  \star D_A \star, \\	d_A^\ast &=& \star d_A \star.
 \end{eqnarray*}

The $\text{Ad}$-invariant inner product on $\g$ can be naturally combined with the wedge product to give a pairing
\[\dot{\wedge}:\Omega^{p}(M,\g)\times\Omega^{q}(M,\g)\to \Omega^{p+q}(M,\mathbb{R}),\]
where we use the dot to indicate that the inner product on $\g$ is being used. Similarly we can use the Hermitian inner product on $\mathcal W$, to obtain a pairing
\[\dot{\wedge}:\Omega^{p}(M,\mathcal W)\times\Omega^{q}(M,\mathcal W)\to \Omega^{p+q}(M,\mathbb{C}).\]
Using these pairings and the Hodge star operator of $M$ we may define $L^2$-inner products as
	\begin{align*}
(\alpha,\beta)_{L^{2},\text{Ad}}&:=\int _{M}\alpha\dot{\wedge}\star\beta, \qquad \text{for}\;\alpha,\beta\in \Omega^{k}(M,\g),
\\
(\theta,\varphi)_{L^{2},\mathcal{W}}&:=\int _{M}\theta\dot{\wedge}\star\varphi,\qquad \text{for}\;\theta,\varphi\in \Omega^{k}(M,\mathcal W).
	\end{align*}
It turns out that $D_{A}^*$ and $d_{A}^*$ are also formal adjoints of $D_{A}$ and $d_{A}$ respectively when using the $L^2$-inner products.

  The curvature form $F_A$ is the $\g$-valued $2$-form defined  by $$F_A = dA +  \frac{1}{2} [A, A].$$
The Yang--Mills gauge field modelled by $A \in \Omega^{1}(M,\g)$ describes the interactions of gauge bosons, while the Higgs field represented by $\Phi \in C^{\infty}(M,\mathcal{W})$ addresses the interactions of Higgs bosons.  The Yang--Mills--Higgs Lagrangian density (or 4-form), \begin{equation}\label{eqn : YMH Lagrangian}2\mathcal L_{\text{YMH}}:=F_{A}\dot{\wedge}\star F_{A}+ d_{A}\Phi \dot{\wedge}\star d_{A}\Phi+\mathcal{V}(|\Phi|^{2})\,d\text{vol},\end{equation} 
encodes and couples the action of these fields, where $|\Phi|$ is the shorthand for $ \langle \Phi, \Phi \rangle_{\mathcal{W}}^{1/2}$ and the potential $\mathcal{V}(|\Phi|^{2})$ is a Mexican hat potential, say $\mathcal{V}(s) = s^{2}/2-s$ for simplicity. 
The specific form of $\mathcal{V}$ does not play a role in the proofs. 
By the  principle of least action, a Yang--Mills--Higgs field 
	\begin{align*}
(A, \Phi) \in \Omega^{1}(M,\g) \times C^{\infty}(M,\mathcal{W})
	\end{align*}
 is a solution to the Euler-Lagrange equations of $\mathcal L_{\text{YMH}}$, which are known as the Yang--Mills--Higgs equations. It can be easily computed that they form the coupled system
 \begin{align}
  	&D_{A}^*F_{A}+\J_{\rho}(d_{A}\Phi,\Phi)=0,\label{eq:ymh1}\\
  	&d_{A}^*d_{A}\Phi +\mathcal{V}'(|\Phi|^{2})\Phi=0.\label{eq:ymh2}
  \end{align} 
 The product $\J_{\rho}(\cdot,\cdot)$ is defined as follows.

\begin{definition} The {\bf real} bilinear form $\J_{\rho}:\mathcal W\times \mathcal W\to \g$ is uniquely defined by
the equation
\begin{equation*}
\Re \langle v,\rho_{*}(X)w\rangle_{\mathcal W}=\langle\J_{\rho}(v,w),X\rangle_{\text{\rm Ad}},\;\;\text{\rm for\;all}\;X\in \g\;\text{\rm and}\;v,w\in\mathcal W.
\label{eq:ex2}
\end{equation*}
\end{definition}

\begin{remark}\label{rem_antisymmetry}{\rm Since the Hermitian inner product in $\mathcal W$ is invariant under $\rho(g)$ for all $g\in G$ we see that
$\rho_{*}(X)$ is skew-Hermitian for all $X\in \g$ and thus $\J_{\rho}$ is anti-symmetric. }
\end{remark}

\begin{definition} We shall say that the representation $\rho$ is \text{\em fully charged} if its derivative
$\rho_{*}:\g\to\text{\rm End}(\mathcal{W})$ satisfies 
 \begin{eqnarray}\label{def : ideally charged}
 	\mbox{ $\rho_{*}(X)w=0$ \text{\rm for all} $X\in \g$} &\Longrightarrow&w=0.\end{eqnarray}
\end{definition}
The condition of $\rho$ being fully charged is equivalent to $\J_{\rho}$ being a non-degenerate bilinear form.

\begin{remark} In the literature one often encounters different choices of signs and factors of $1/2$ in the definition of $\mathcal L_{\text{YMH}}$. These choices will not really affect the analysis that we will perform on the equations \eqref{eq:ymh1}--\eqref{eq:ymh2}.

\end{remark}


    A section $\mathbf{U}\in C^{\infty}(M,G)$ acts on a Yang--Mills--Higgs field $(A, \Phi)$ as follows
 \begin{equation}\label{eqn : gauge transform}(A,\Phi)\mapsto 
(A, \Phi) \cdot \U = (A \cdot \mathbf{U},\rho(\mathbf{U}^{-1})\Phi),
\end{equation}
 where   $A \cdot \mathbf{U}=\mathbf{U}^{-1}d\mathbf{U}+\mathbf{U}^{-1}A\mathbf{U}$.  Two Yang--Mills--Higgs fields $(A, \Phi)$ and $(B, \Xi)$ are said to be gauge equivalent if there exists a section $\mathbf{U}\in C^{\infty}(M,G)$ such that  \[(B, \Xi) = (A ,\Phi) \cdot \mathbf{U}.\] The $\Ad$-invariance of $(\g, \langle \cdot, \cdot \rangle_{\Ad})$ along with the $G$-invariance of $(\mathcal{W}, \langle \cdot, \cdot \rangle_\mathcal W)$ guarantees that the Yang--Mills--Higgs system \eqref{eq:ymh1}--\eqref{eq:ymh2} is gauge invariant; namely, if $(A, \Phi)$ is a Yang--Mills--Higgs field then $(A,\Phi)\cdot \mathbf{U}$ is also a Yang--Mills--Higgs field for any section $\mathbf{U}\in C^{\infty}(M,G)$.

 \subsection{The inverse problem}

Following \cite{CLOP, CLOP2}, we shall work in the causal diamond
 $$
 \mathbb D
 := \{ (t,x) \in \R^{1+3} : |x| \le t + 1,\ |x| \le 1 - t \},
 $$
see Figure~\ref{fig_D} below for a visualization of $\mathbb D$.
Let $(A, \Phi)  \in \Omega^{1}(\mathbb{D},\g) \times C^\infty(\mathbb{D}, \mathcal{W})$ be a Yang--Mills--Higgs field i.e. $(A, \Phi)$ solves the Yang--Mills--Higgs system \eqref{eq:ymh1}--\eqref{eq:ymh2} in $\mathbb{D}$. The inverse problem we shall solve addresses whether $(A, \Phi)$ can be uniquely determined by some local measurements in $\mathbb{D}$. 
The gauge invariance of \eqref{eq:ymh1}--\eqref{eq:ymh2} implies that we will only be able to reconstruct the Yang--Mills--Higgs fields up to gauge. Hence, we shall consider the moduli space of the Yang--Mills--Higgs fields invariant under the action of the pointed gauge group \[G^0(\mathbb{D}, p) = \{\mathbf{U} \in C^\infty (\mathbb{D}; G) : \mathbf{U}(p) = \id\}, \quad \mbox{for $p = (-1, 0) \in \mathbb{D}$}.\] 
(The reason for requiring that $\mathbf{U}(p)=\id$ is technical in nature.)
For 
	\begin{align*}
(A, \Phi), (B, \Xi) \in \Omega^{1}(\mathbb{D},\g) \times C^\infty(\mathbb{D}, \mathcal{W}),
	\end{align*}
we say $(A, \Phi) \sim (B, \Xi)$ if there exists $\mathbf{U} \in G^0(\mathbb{D}, p)$ such that $(B, \Xi) = (A, \Phi) \cdot \mathbf{U}$ in $\mathbb{D}$.

The active local measurements that we perform to obtain the data for the problem have to account somehow for this gauge invariance. One possibility would be to attempt some gauge fixing in order to define a suitable source-to-solution map; in fact we shall do that at a later point, but in order to formulate the result it seems more appropriate to find a presentation of the data that is amenable to the gauge invariance. Having this in mind, we follow the approach we took in \cite{CLOP2} and define:
\begin{align}\label{def_mho}
	\mho := \{(t,x) : \text{$(t,x)$ is in the interior of $\mathbb D$ and $|x| < \varepsilon_0$} \},
\end{align}for a fixed $0 < \epsilon_0 < 1$, together with the following data set of $(A, \Phi)$, $$\mathcal{D}_{(A, \Phi)}: = \left\{\begin{array}{l|l}
	(V, \Psi)|_\mho & 
	\left.\begin{array}{l}
		\mbox{$(V, \Psi) \in C^3(\mathbb{D})$ solves \eqref{eq:ymh1}-\eqref{eq:ymh2} in $\mathbb{D} \setminus \mho$} \\ \mbox{and $(V, \Psi) \sim (A, \Phi)$ near $\partial^- \mathbb{D}$} \end{array} \right.  \end{array} \right\}.$$
Here
\[	\partial^- \mathbb{D}=\{(t,x)\in\mathbb{D}:\,|x|=t+1\}\]
and 	
$(V, \Psi) \sim (A, \Phi)$ near $\partial^- \mathbb{D}$ if there are $\mathbf{U} \in G^0(\mathbb{D}, p)$ and a neighbourhood $\mathcal  U$ of $\partial^- \mathbb{D}$ such that $(V,\Psi) = (A,\Phi)\cdot \mathbf{U}$ on $ \mathcal {U}\cap \mathbb{D}$.
Note that $p = (-1, 0) \in \bar{\mho}$.

Under suitable assumptions on $\g$ and $\rho_{*}$, we shall prove that one can uniquely determine the Yang--Mills--Higgs field $(A,\Phi)$ in $\mathbb{D}$ from the local information $\mathcal{D}_{(A,\Phi)}$ in $\mho$. Let $Z(\g)$ denote the centre of the Lie algebra $\g$.

\begin{theorem}\label{thm:main thm} Assume that the centre $Z(\g)$ is non-trivial,  $\rho$ is fully charged and 
	\begin{align*}
Z(\g)\cap \text{\rm Ker}\,\rho_{*}=\{0\}.
	\end{align*}
	Suppose $(A, \Phi)$ and $(B, \Xi)$ are two Yang--Mills--Higgs fields in $\mathbb{D}$.
	Then $$\mathcal{D}_{(A, \Phi)} = \mathcal{D}_{(B, \Xi)} \Longleftrightarrow (A, \Phi) \sim (B, \Xi) \, \mbox{in $\mathbb{D}$}.$$
	In other words,	 the data sets agree $\mathcal{D}_{(A, \Phi)} = \mathcal{D}_{(B, \Xi)}$ if and only if there exists a gauge transformation $\mathbf{U}\in G^0(\mathbb{D}, p)$ such that the following holds  in $\mathbb{D}$:
\[(B, \Xi) = (\mathbf{U}^{-1}d\mathbf{U}+\mathbf{U}^{-1}A\mathbf{U},\rho(\mathbf{U}^{-1})\Phi).\]
	 
\end{theorem}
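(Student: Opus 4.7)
The reverse implication is immediate from the gauge invariance of the Yang--Mills--Higgs equations, so the work lies in the forward direction. My plan is to follow the nonlinear wave interaction strategy of \cite{CLOP, CLOP2}, with the crucial new twist—signaled in the introduction—of driving the system by sources valued in $Z(\g)$.

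\textbf{Step 1: Gauge fixing and source-to-solution map.} First I would reduce the data set $\mathcal{D}_{(A,\Phi)}$ to an actual source-to-solution operator. Fix a representative $(A,\Phi)$ and impose a relative Lorenz-type gauge $D_A^{*}(V-A)=0$ on the alternative solution $V$, together with a matching condition for $\Psi$; this turns the YMH system into a quasilinear hyperbolic system of the schematic form $\Box V + \text{l.o.t.} = J_V + \text{cubic}(V,\Psi)$, $\Box \Psi + \text{l.o.t.} = J_\Psi + \text{cubic}(V,\Psi)$, well-posed with small source $(J_V,J_\Psi)$ supported in $\mho$. The equality $\mathcal{D}_{(A,\Phi)}=\mathcal{D}_{(B,\Xi)}$, together with finite speed of propagation and the pointed gauge group, yields equality of the associated source-to-solution maps $L_{(A,\Phi)}=L_{(B,\Xi)}$ after an auxiliary gauge transformation trivializing the two backgrounds near $\partial^-\mathbb{D}$.

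\textbf{Step 2: Higher-order linearizations with central sources.} Differentiating $L$ in the source parameters, I would extract the multilinear source-to-solution maps. The key idea is to feed the $A$-equation with three sources $f_1,f_2,f_3$ taking values in $Z(\g)$ and to activate or not activate a Higgs source. Because the bracket $[\,\cdot\,,\,\cdot\,]$ vanishes on $Z(\g)$, the pure Yang--Mills cubic self-interaction degenerates dramatically: the surviving trilinear interactions are those mediated either by the Higgs coupling $\J_\rho(d_A\Phi,\Phi)$ or by $\rho_{*}(A)\wedge\rho_{*}(A)\Phi$, both of which are sensitive to the representation $\rho_{*}$ evaluated on $Z(\g)$.

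\textbf{Step 3: Microlocal extraction of coefficients at a point.} Pick an interior point $q\in \mathbb{D}$ from which three distinct null geodesics emanate backward and meet $\mho$. Choose distorted plane wave sources with future-pointing null covectors $\zeta_1,\zeta_2,\zeta_3$ summing to a null covector $\zeta_4$ in generic position at $q$, valued in $Z(\g)$ (and in $\mathcal W$ for the Higgs channel). The standard paired-Lagrangian/symbol calculus, as used in \cite{CLOP,CLOP2}, expresses the principal symbol of the three-wave interaction along the light cone from $q$ as a polynomial in $\rho_*(X_i)\Phi(q)$, $(d_A\Phi)(q)$, and $F_A(q)$ evaluated on the $\zeta_i$. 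The hypotheses $Z(\g)\cap \text{Ker}\,\rho_*=\{0\}$ and full chargedness of $\rho$ make the relevant algebraic map injective modulo gauge, so the combined information from sufficiently many choices of covectors and central vectors $X_i$ recovers $\Phi(q)$ and the one-jet of $A$ at $q$ up to gauge.

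\textbf{Step 4: Propagation and global reconstruction.} Once the fields are determined up to gauge in a small neighborhood of $\overline{\mho}$, I would propagate the identification across $\mathbb{D}$ by the broken-scattering/causal-propagation argument of \cite{CLOP2}, adapted by tracking both the $\Ad$- and $E$-components along null geodesics. The pointed gauge fixing at $p=(-1,0)\in\overline{\mho}$ then yields a single $\mathbf{U}\in G^{0}(\mathbb{D},p)$ witnessing $(A,\Phi)\sim(B,\Xi)$ globally. The main technical obstacle I anticipate is Step 3: disentangling the Higgs-mediated contribution from the residual Yang--Mills contribution in the principal symbol, and verifying that the algebraic system obtained by varying $\zeta_i$ and $X_i\in Z(\g)$ is genuinely solvable for $\Phi(q)$; this is precisely where the two non-degeneracy hypotheses on $\rho$ and $Z(\g)$ must be used in tandem.
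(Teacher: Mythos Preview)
Your overall architecture (gauge-fix to a source-to-solution map, threefold linearization, microlocal interaction analysis) is right, but two genuine gaps would prevent the argument from going through as written.

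\emph{First, the source configuration.} You propose feeding the Yang--Mills channel with three sources $f_1,f_2,f_3$ all valued in $Z(\g)$. With that choice and no Higgs source, \emph{every} leading threefold interaction term vanishes: the commutator terms die because all $W_{(k)}$ are central, and the $\Upsilon$-dependent terms die because $\Upsilon_{(k)}=0$ for all $k$. There is nothing to read off. The paper's actual choice is asymmetric: two central YM sources $J_{(2)},J_{(3)}\in Z(\g)$ together with one Higgs source $\mathcal F_{(1)}$ (and $J_{(1)}=0$, $\mathcal F_{(2)}=\mathcal F_{(3)}=0$). This $2{+}1$ split is exactly what isolates, in the limit $s\to 0$, the single surviving interaction $\rho_*(b_{(2)})\rho_*(b_{(3)})\hat\Upsilon_{(1)}$ in the Higgs channel, from which one reads off $\mathbf S^{A,\rho}_{z\gets y\gets x}\upsilon_{(1)}$ and, in the YM channel, the off-diagonal piece $(\P^{A,\Phi,\rho}_{z\gets y})_{12}\P^{A,\rho}_{y\gets x}\upsilon_{(1)}$.

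\emph{Second, the missing adjoint step.} The central-source computation only yields $\mathbf S^{A,\rho}$; it does \emph{not} by itself recover $A$, since $\rho_*$ need not be faithful (e.g.\ the Standard Model, where $\mathfrak{su}(3)\subset\ker\rho_*$). The paper first runs the pure Yang--Mills argument of \cite{CLOP2} with three \emph{non-central} YM sources and $\mathcal F=0$ to obtain $\mathbf S^{A,\Ad}$; only then does it combine with $\mathbf S^{A,\rho}$ to get $\mathbf S^{A,\Ad\oplus\rho}$, and it is the hypothesis $Z(\g)\cap\ker\rho_*=\{0\}$ that makes $\Ad\oplus\rho$ faithful so Corollary~\ref{corollary:rhofaithful} applies. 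Your Step~3 conflates this: the hypothesis is not used to make a pointwise algebraic map injective, but to ensure faithfulness of $\Ad\oplus\rho$, after which $A$ is recovered by inverting the broken light ray transform. Finally, $\Phi$ is obtained not pointwise from a symbol at $q$, but by differentiating (in the endpoint $y$) the weighted line integral encoded in $(\P^{A,\Phi,\rho}_{z\gets y})_{12}$, using non-degeneracy of $\J_\rho$ (i.e.\ the fully-charged assumption).
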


\begin{remark}\label{rem_ss_map} Intuitively, it is helpful to think of the data set as produced by an observer creating sources $(J,\mathcal F)$ supported in $\mho$ and observing solutions $(V,\Psi)$ in $\mho$ to the coupled system
\begin{align}
  	&D_{V}^*F_{V}+\J_{\rho}(d_{V}\Psi,\Psi)=J,\label{eq:VJ}\\
  	&d_{V}^*d_{V}\Psi +\mathcal{V}'(|\Psi|^{2})\Psi=\label{eq:PsiF}\mathcal F.
  \end{align} 
  The first step in the proof of Theorem \ref{thm:main thm} consists in converting the data set into a source-to-solution map where the intuition above is rigorously realised. For this, we will follow \cite{CLOP2} and employ two different gauges: the temporal gauge and the relative Lorenz gauge.
  No new difficulty arises here in relation to \cite{CLOP2}.

 \end{remark}

\begin{remark} The hypotheses on $\g$ and $\rho_{*}$ are general enough to cover the cases that are of most interest to us. Let us start with $G=SU(2)\times S^1$ and $\rho$ as in the electroweak model.
In this instance, $\mathcal W=\mathbb{C}^2$ with the standard Hermitian
inner product and $\rho$ is given by
\[\rho(g,e^{\imath\theta})w=e^{\imath n_{Y}\theta}gw,\]
where $n_{Y}$ is an integer usually taken as $3$ and $(g,e^{\imath\theta})\in SU(2)\times S^{1}$. (Here $\imath = \sqrt{-1}$.) The derivative $\rho_{*}$ is easily computed as
\[\rho_{*}(X,\imath x)w=(X+\imath n_{Y}x\id)w\]
where $(X,\imath x)\in \g=\mathfrak{su}(2)\times \imath\mathbb{R}$. For $n_{Y}\neq 0$ we see that $\rho_{*}$ is fully charged and  $\text{\rm Ker}\,\rho_{*}=\{0\}$. The centre of the Lie algebra is non-trivial and equal to $\mathfrak{u}(1)=\imath\mathbb R$, the Lie algebra of the $S^1$-factor.

For the Standard Model we need to enlarge the group by adding an $SU(3)$-factor, that is\footnote{Strictly speaking the structure group of the Standard Model is $G/\Gamma$, where $\Gamma\subset \mathbb{Z}_{6}$ is some unknown subgroup \cite{Tong2017}. This is inconsequential for us as all these groups share the same Lie algebra.}, $G=SU(3)\times SU(2)\times S^{1}$. 
In this case the $SU(3)$-component acts trivially on the Higgs vector space $\mathcal W$ and this has the effect
of making $\text{Ker}\,\rho_{*}=\mathfrak{su}(3)$ but it remains fully charged.
Moreover, the hypotheses in the theorem are still satisfied since $\mathfrak{su}(3)\cap \mathfrak u(1)=\{0\}$.

In forthcoming work we shall complete the Standard Model (at the classical level) by including matter fields.

\end{remark}

 \subsection{Outline of the proof} \label{subsection:outline}We now provide an overview of the proof of Theorem~\ref{thm:main thm}.
 
\begin{itemize} 
	\item As in \cite{CLOP, CLOP2} we consider the non-linear interaction of three singular waves produced by sources which are conormal distributions following the approach pioneered in \cite{KLU}.

\item The sources $(J, \mathcal{F})$ in $\mho$ have the general form 
$$J = \sum_{k=1}^3\epsilon_{(k)} J_{(k)}$$
and 
$$\mathcal{F} = \sum_{k=1}^3\epsilon_{(k)} \mathcal{F}_{(k)}$$
for some small coefficients $\epsilon_{(k)}$. The source $J$ acts on the YM-channel (i.e. equation \eqref{eq:VJ}) while the source $\mathcal F$ activates the Higgs channel (i.e. equation \eqref{eq:PsiF}). When $J$ takes values in $Z(\g)$ we shall say that it is an EM-source (for electromagnetic).

\item The first step consists in choosing sources that do not activate the Higgs channel, i.e. $\mathcal F=0$, but
fully activate the YM-channel by letting $J = \sum_{k=1}^3\epsilon_{(k)} J_{(k)}$.
Arguing very closely to our previous work in \cite{CLOP2} we shall
be able to recover the broken non-abelian light ray transform of $A$ associated with the adjoint representation, $\mathbf{S}_{z\gets y\gets x}^{A, \text{Ad}}$.
There is not much new here in the sense that all the hard work was done in \cite{CLOP2}. Still, some care is needed as we are now dealing with coupled equations for $(A,\Phi)$.

\item The second step is the most interesting one and takes advantage that we have a Lie algebra with non-trivial centre and a representation that is fully charged. We choose the sources that produce the minimal disturbance in the system but that activate both channels. This means two sources in the EM-channel but just one in the Higgs channel. Specifically we take
$$J = \epsilon_{(2)} J_{(2)}+\epsilon_{(3)} J_{(3)},\;J_{(2)},J_{(3)}\in Z(\g)$$
and 
$$\mathcal{F} = \epsilon_{(1)} \mathcal{F}_{(1)}.$$
We will prove that with this choice, the triple interaction of the three singular waves allows the recovery
of the broken non-abelian light ray transform of $A$ in the representation $\rho$, $\mathbf{S}_{z\gets y\gets x}^{A, \rho}$, together with a weighted integral transform of $\Phi$. The weight is determined by $A$.

\item To obtain $A$ we just note that the direct sum representation $\text{Ad}\oplus\rho$ has a derivative that is faithful (i.e. trivial kernel) due to the hypothesis $Z(\g)\cap \text{\rm Ker}\,\rho_{*}=\{0\}$. Hence $A$ may also be recovered through the inversion of the broken non-abelian light ray in the fundamental representation, performed in \cite[Theorem 5]{CLOP}. Moreover, if $A$ is known, $\Phi$ is easily recovered from the weighted integral transform of $\Phi$ obtained in the second step.

\end{itemize}

\subsection{Comparison with existing literature} 

\begin{figure}
\centering
\includegraphics[width=0.5\textwidth]{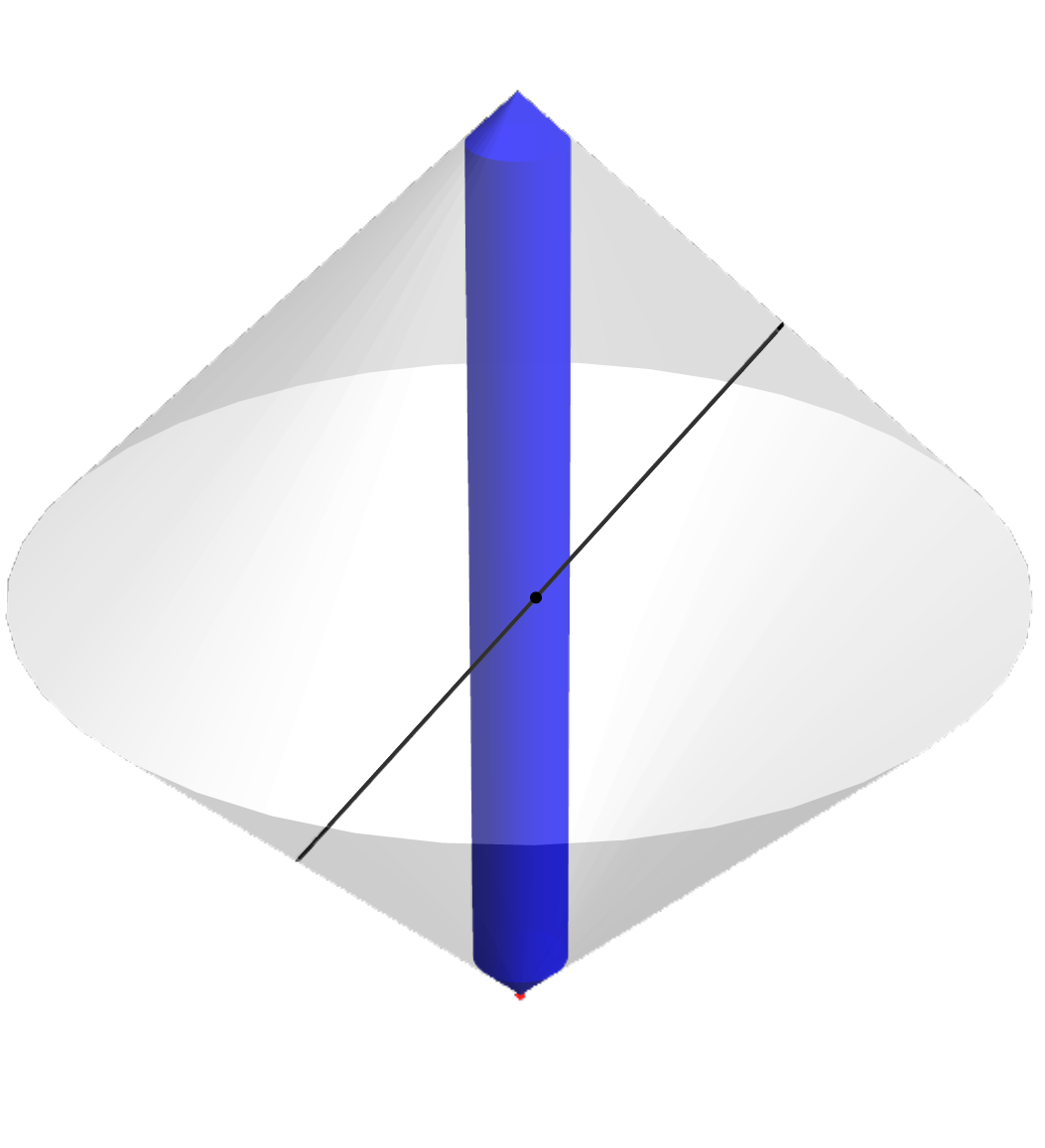}
\caption{The set $\mho$ (in blue) inside the diamond $\mathbb D$ in the $1+2$ dimensional case. The black line is a lightlike geodesic segment that intersects $\p \mho$ tangentially at the black point. The geodesic stays outside $\mho$ and this shows that the surface $\p \mho$ is not pseudoconvex. The point $p$ is drawn in red.}
\label{fig_D}
\end{figure}

The present work is a continuation of \cite{CLOP2} where we considered the pure Yang-Mills system. The direct theory in Section~\ref{section:perturbedYMH} is parallel to that in \cite{CLOP2}, and on the very basic level, so is the approach to solve the inverse problem via a threefold linearization. 

The coupled system (\ref{eq:ymh1})-(\ref{eq:ymh2}) is more intricate than the pure Yang-Mills system. From the point of view of the approach in \cite{CLOP2} this leads to two essential complications. First, the non-abelian ray transform associated to the latter system is simply the parallel transport given by the Yang-Mills connection, whereas for the former it is given by a coupled system of transport equations, see (\ref{eq:imp})-(\ref{eq:imp2}) below. Second, the algebraic computations related to principal symbols of the terms generated by the threefold linearization become more challenging than in \cite{CLOP2}. 

Nonetheless, we show that the Yang-Mills field in the coupled system can be recovered via a reduction to the pure Yang-Mills case, on the level of principal symbols. 
The Higgs field is not present in \cite{CLOP2}. Here we recover it using a new argument based on choosing sources for the Yang-Mills field that take values in the center $Z(\mathfrak g)$. This can be viewed as a key novelty in the paper as it makes the algebraic computations related to the the principal symbols manageable. 

The technique to utilize multiple linearizations originates from \cite{KLU}, where it was applied to recover the leading order terms in a semilinear wave equation with a simple power type nonlinearity. 
Subsequently, lower order terms and coefficients in nonlinear terms were recovered as well, see \cite{Feizmohammadi2022} and \cite{LUW}. Multiple linearizations have been used also to solve inverse problems for elliptic \cite{Feizmohammadi2020, Lassas2021}
and real principal type operators (containing both wave and elliptic operators as special cases) \cite{Oksanen2020}. 
We will not attempt to give a detailed review of the ever-growing body of literature using the technique. An early survey of the theory is given in \cite{Matti}.

The above mentioned inverse problems for nonlinear real principal type operators, and their special cases, differ from the problem in the present paper in two important ways. In the real principal type case, the direct problem is not gauge invariant and the unknowns to be recovered are coefficients in the equation. 
In the case of the Yang-Mills-Higgs equations, on the other hand, there is a gauge invariance and the unknowns are solutions to the equations, not independent coefficients. Inverse problems for the Einstein equations \cite{KLOU, Lassas2017a, Uhlmann2018} share these two features. 

In fact, the inverse problems for the Yang-Mills-Higgs and Einstein equations can be seen to be closer to unique continuation than coefficient determination problems. However, they cannot be solved using classical unique continuation results \cite{Hormander-Vol4}, since these are known to fail in geometric settings lacking pseudoconvexity \cite{Alinhac1983}. Unique continuation from $\mho$ to $\mathbb D$ is an example of such a setting, see Figure~\ref{fig_D}. The unique continuation results not requiring pseudoconvexity, \cite{Tataru} being a prime example, impose analyticity conditions, and thus are also unsuitable for our purposes. 

Finally, let us mention that the inverse problem associated to the linearized version of the Yang-Mills-Higgs system is open. In fact, time-dependent, matrix-valued first and zeroth order coefficients in linear vector-valued wave equations have not been recovered even in the Minkowski geometry; only the time-independent, symmetric case has been solved \cite{KOP}. 
Finally, we mention that the analogous problem for linear elliptic equations is open (see \cite{Cekic2017a, Cekic2020} for the closest positive results), but for the linear dynamical Schr\"odinger equation a solution is given in \cite{Tetlow2022}.

\subsection{Structure of the paper}
\begin{itemize}
\item Section \ref{section:lightray} discusses connection wave operators and the broken non-abelian light ray transform of a connection for an arbitrary representation $\rho$. Corollary \ref{corollary:rhofaithful} shows that if $\rho_{*}$ is faithful, then one can recover the connection up to gauge from the broken non-abelian light ray transform. We finish the section with a discussion of the parallel transport that arises for pairs $(A,\Phi)$ when considering the subprincipal symbol of the linearized Yang--Mills--Higgs equations.
\item Section \ref{section:perturbedYMH} considers the Yang--Mills--Higgs equations with sources. We use the temporal gauge and the relative Lorenz gauge to formulate a suitable source-to-solution map.
\item Section \ref{sec:linearization} computes the equations for the triple cross-derivative when three sources are introduced. The focus here is on the leading interaction terms. In this section we also set up the technical aspects of the sources being used.
\item Section \ref{section:recoveryad} explains how to recover the broken non-abelian light transform of the connection in the adjoint representation.
\item Section \ref{section:abelian} contains the key microlocal calculation that allows for the recovery of the broken light ray transform of $A$ in the representation $\rho$ and a weighted integral transform of $\Phi$ whose weight is determined by $A$. The proof of Theorem \ref{thm:main thm} is completed here.

\end{itemize}

  
 
\section{
Light ray transforms and wave operators}\label{section:lightray}
  
\subsection{The parallel transport}
 
Given a connection $A \in \Omega^1(M, \g)$, we can introduce the parallel transport $\mathbf{U}_\gamma^A$ on the principal bundle $M \times G$ along a curve $\gamma: [0, T] \rightarrow M$. Namely, $\mathbf{U}_\gamma^A = u(T)$ is given by the solution to the following ODE
 \begin{align*}
 	\dot{u} + A_{\gamma}(\dot{\gamma}) \, u &= 0\\
 	u(0) &= \id.
 \end{align*} 
 Here $A$ is viewed as a covector, $\dot{\gamma}(t)$ is deemed as a vector, and $A_{\gamma}(\dot{\gamma})$ denotes the pairing $\langle A, \dot{\gamma}(t) \rangle$.
Moreover, given a vector space $\mathbb V$ (real or complex) and a linear representation $\rho:G\to \GL(\mathbb{V})$, we obtain a parallel transport acting on $\mathbb{V}$ by setting $\mathbf{P}_\gamma^{A, \rho} = \rho(\mathbf{U}_\gamma^A)$.
Alternatively, one may define the parallel transport $\P_\gamma^{A, \rho}$ directly through the parallel transport equation \begin{equation} 	\label{eqn : parallel transport for rho}
  \begin{aligned}
 \dot{v} + \rho_\ast(A_{\gamma}(\dot{\gamma})) v &= 0\\
	v(0) &= w.  
\end{aligned} 
\end{equation}
Since $G$ is a matrix Lie group, we have $G\subset \GL(n,\C)$ for some $n$; in this case if $\rho=\text{inc}$ is just the inclusion, then $\mathbf{P}_\gamma^{A, \text{inc}}v = \mathbf{U}_\gamma^A v$ for all $v\in \C^n$.

Recall that the adjoint representation is defined by $$\Ad : G \rightarrow \text{Aut}(\g),\quad h \mapsto \Ad_h, \quad$$
with $\Ad_h w = h w h^{-1}$ for $w \in \g$. 
It follows in this case that $\mathbb{V}=\g$ and $$\mathbf{P}_\gamma^{A, \Ad} w = \Ad_{\mathbf{U}_\gamma^A} w =  \mathbf{U}_\gamma^A w (\mathbf{U}_\gamma^A)^{-1}\quad\mbox{for $w \in \g$}.$$ Equivalently, $v(t) = \mathbf{U}_\gamma^A w (\mathbf{U}_\gamma^A)^{-1}$ solves the following ODE, \begin{equation} 	\label{eqn : parallel transport for ad}\begin{aligned}
	\dot{v} + [ A_{\gamma}(\dot{\gamma}), v] &= 0\\
	v(0) &= w. 
\end{aligned} \end{equation}
 
 For any two points  $x, y \in \mathbb{D} \subset M$, there is a unique geodesic $\gamma$ from $x$ to $y$. Since the parallel transport $\mathbf{P}_\gamma^{A, \rho}$ is independent of the choice of parametrization of $\gamma$, we can define the parallel transport from $x$ to $y$ by $$\mathbf{P}_{y \gets x}^{A, \rho} = \mathbf{P}_\gamma^{A, \rho}.$$

\subsection{The connection wave operator}

For a fixed connection $A$, the second equation \eqref{eq:ymh2} in the Yang--Mills--Higgs system is a nonlinear hyperbolic equation for $\Phi$.
Its linear part is given by the connection wave operator
$\Box_{A,\rho} = d_A^* d_A$.
It turns out that the first equation \eqref{eq:ymh1} has a very similar structure, in a suitable gauge, but to keep the notation simple, we will focus on equations of the form
	\begin{align}\label{connection_wave}
\Box_{A,\rho} \Phi = 0
	\end{align}
for a moment. 
 
Connection wave operators fit into the framework of differential operators of real principal type, and the corresponding fundamental solutions are given by Fourier integral operators. 
There holds, modulo zeroth order terms,
    \begin{align*}
\Box_{A,\rho} \Phi 
=
-\p^\alpha \p_\alpha \Phi
-2 \rho_*(A_\alpha) \p^\alpha \Phi,
    \end{align*}
see e.g.   \cite[Section 2.1]{CLOP}.
Here the indices are raised and lowered with respect to the Minkowski metric.
Thus the principal and subprincipal symbols of $\Box_{A,\rho}$ are
    \begin{align}\label{sigma_Box}
   	\sigma[\Box_{A, \rho}](x, \xi) = \xi^\alpha \xi_\alpha, \quad \sigma_{\mathrm{sub}}[\Box_{A, \rho}](x,\xi) = 2 \imath^{-1}  \xi^\alpha \rho_\ast (A_\alpha).
   \end{align}
As the principal symbol does not depend on $A$ or $\rho$
we write simply $\sigma[\Box_{A, \rho}] = \sigma[\Box]$.

It is classical, see \cite{Duistermaat-Hormander-FIO2}, that 
if a conormal distribution $\Upsilon$ solves \eqref{connection_wave} then its principal symbol $\sigma[\Upsilon]$
satisfies a transport equation associated to $\sigma[\Box]$ and $\sigma_{\mathrm{sub}}[\Box_{A, \rho}]$. 
This transport equation can be reduced to the parallel transport equation \eqref{eqn : parallel transport for rho}, see e.g.   \cite[Section 2.6]{CLOP}. If $K \subset M$ is the submanifold on which 
$\Upsilon$ is singular, then $K$ is of codimension one and the conormal bundle $N^* K$ is contained in the characteristic variety 
	\begin{align*}
\{(x,\xi) \in T^* M : \sigma[\Box](x,\xi) = 0\}.
	\end{align*}
We may trivialize the half density bundle over $N^* K$
by choosing a strictly positive half density $\omega$, and it is convenient to choose $\omega$ so that it is positively homogeneous of degree 1/2. 
Let $\gamma(t) = t \xi + x$ be a lightlike line such that the segment from $\gamma(0) = x$ to $\gamma(s) = y$ is contained in $K$
for some $s \in \R$. 
We
recall that $\gamma$ being lightlike means that $\xi^\alpha \xi_\alpha = 0$.
Let us consider the rescaled principal symbol of $\Upsilon$ along $\gamma$,
\begin{align}\label{prin_symb_rescaled}
	e^{\varrho(t)}\left(\omega^{-1}\sigma[\Upsilon]\right)   (\gamma(t), \dot \gamma_\alpha(t) dx^\alpha),
\end{align} 
where 
\begin{align}\label{def_varrho}
\varrho(t) = \int_{0}^t \left(\omega^{-1}\mathscr{L}_{H_{\sigma[\Box]}} \omega\right)\left(\gamma(r), \dot \gamma_\alpha(r) dx^\alpha\right) dr.
\end{align} 
Here $\mathscr{L}_{H_{\sigma[\Box]}}$ is the Lie derivative with respect to the Hamilton vector field $H_{\sigma[\Box]}  = 2\xi^\alpha \p_\alpha$ associated to the principal symbol $\sigma[\Box]$, acting on half densities.
Now we can express the evolution of \eqref{prin_symb_rescaled}
as follows
\begin{align}
\label{pt_symbol_rho}
 		e^{\varrho(s)}(\omega^{-1}\sigma[\Upsilon])(y, \xi) &=  \P_{y \gets x}^{A,\rho} \left((\omega^{-1}\sigma[\Upsilon])(x, \xi)\right),
 \end{align} 
where $y = \gamma(s)$.

 \subsection{The broken light ray transform}
\label{sec_S}
 As was seen in \cite{CLOP, CLOP2}, the multiple linearization scheme leads to parallel transport along two lightlike geodesic segments. This type of parallel transport over broken geodesics is termed broken light ray transform. To formalize this notion, we introduce
 \begin{align}\label{def_diamonds_etc}
 	\mathbb L &= \{(x,y) \in {\mathbb D}^2 : \text{there is a lightlike line joining $x$ and $y$}\},
 	\\ \label{def_broken_triplet}
 	\mathbb S^+(\mho) &= \{(x,y,z) \in {\mathbb D}^3 : (x,y), (y,z) \in \mathbb L,\ x < y < z,\ x,z \in \mho,\ y \notin \mho \},
 \end{align}
 where $x<y$ means that there is a future pointing causal curve from $x$ to $y$.
 (For $(x,y) \in \mathbb L$, we have $x<y$ if and only if the time coordinate of $y-x$ is strictly positive.) 
Let $A\in \Omega^{1}(\mathbb{D},\g)$ and $\rho:G\to\GL(\mathbb{V})$ a linear representation.
The broken light ray transform $\mathbf{S}_{z \gets y \gets x}^{A, \rho}$ with respect to $(A, \rho)$ over the broken future pointing geodesic via $(x, y, z) \in \mathbb{S}^+(\mho)$ is defined as the parallel transport travelling from $x$ through $y$ to $z$, that is $$\mathbf{S}_{z \gets y \gets x}^{A, \rho}:= \mathbf{P}_{z \gets y}^{A, \rho} \mathbf{P}_{y \gets x}^{A, \rho}.$$

\begin{theorem} If 
\[\mathbf{S}^{A,\rho}_{z \gets y \gets x}=\mathbf{S}^{B,\rho}_{z \gets y \gets x}\]
for all $(x,y,z)\in  \mathbb S^+(\mho)$, then 
there exists a smooth $\mathbf{U}:\mathbb{D}\to \rho(G)$ such that $\mathbf{U}|_{\mho}=\id$ and
$\rho_{*}(B)=\mathbf{U}^{-1}d\mathbf{U}+\mathbf{U}^{-1}\rho_{*}(A)\mathbf{U}$.
\label{thm:main2}
\end{theorem}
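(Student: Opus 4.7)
The plan is to reduce the statement to the special case in which the derivative of the representation is faithful, which is already handled by Corollary \ref{corollary:rhofaithful} and ultimately by \cite[Theorem 5]{CLOP}. The key idea is to replace the structure group $G$ by its image $H := \rho(G) \subset \GL(\mathbb{V})$. Since $G$ is a compact connected matrix Lie group and $\rho$ is a continuous homomorphism, $H$ is again a compact connected matrix Lie group, with Lie algebra $\mathfrak h := \rho_*(\mathfrak g) \subset \mathrm{End}(\mathbb{V})$, and the inclusion $\mathrm{inc}: H \hookrightarrow \GL(\mathbb{V})$ is tautologically a faithful representation whose derivative is just the inclusion $\mathfrak h \hookrightarrow \mathrm{End}(\mathbb{V})$.

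Next I would view $\tilde A := \rho_*(A)$ and $\tilde B := \rho_*(B)$ as smooth $\mathfrak h$-valued one-forms in $\Omega^1(\mathbb{D}, \mathfrak h)$, i.e.\ as connections on the trivial principal $H$-bundle $\mathbb{D} \times H$. Inspection of the parallel transport equation \eqref{eqn : parallel transport for rho} shows that the parallel transport of $A$ in the representation $\rho$ is identical to the parallel transport of $\tilde A$ in the fundamental representation $\mathrm{inc}$ of $H$: in both settings $v$ solves the same ODE $\dot v + \tilde A_{\gamma}(\dot \gamma) v = 0$. Consequently,
$$\mathbf{P}^{A,\rho}_{y \gets x} = \mathbf{P}^{\tilde A,\mathrm{inc}}_{y \gets x}, \qquad \mathbf{S}^{A,\rho}_{z \gets y \gets x} = \mathbf{S}^{\tilde A,\mathrm{inc}}_{z \gets y \gets x},$$
and analogously for $B$ and $\tilde B$. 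The hypothesis therefore translates into the equality $\mathbf{S}^{\tilde A,\mathrm{inc}}_{z \gets y \gets x} = \mathbf{S}^{\tilde B,\mathrm{inc}}_{z \gets y \gets x}$ for all $(x,y,z) \in \mathbb S^+(\mho)$.

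At this point I would invoke \cite[Theorem 5]{CLOP} applied to the compact connected matrix Lie group $H$ and its faithful fundamental representation $\mathrm{inc}$. This produces a smooth gauge transformation $\mathbf{U}: \mathbb{D} \to H = \rho(G)$ with $\mathbf{U}|_{\mho} = \id$ such that
$$\tilde B = \mathbf{U}^{-1} d \mathbf{U} + \mathbf{U}^{-1} \tilde A \mathbf{U},$$
which, upon unpacking the definitions of $\tilde A$ and $\tilde B$, is precisely the desired identity $\rho_*(B) = \mathbf{U}^{-1} d \mathbf{U} + \mathbf{U}^{-1} \rho_*(A) \mathbf{U}$. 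The substantive analytic content, namely recovering a connection up to gauge from its broken non-abelian light ray transform in a faithful representation, is already contained in \cite[Theorem 5]{CLOP}; the present theorem merely removes the faithfulness assumption on $\rho_*$. Thus no serious obstacle remains beyond verifying the routine reduction above, with the only detail worth noting being that the resulting gauge transformation takes values in $\rho(G)$ (not in some larger overgroup) and satisfies $\mathbf{U}|_{\mho} = \id$, both of which are automatic from applying \cite[Theorem 5]{CLOP} directly to the group $H = \rho(G)$.
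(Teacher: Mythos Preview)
Your proposal is correct and follows essentially the same route as the paper: both arguments reduce to applying \cite[Theorem 5]{CLOP} to the connections $\rho_*(A)$ and $\rho_*(B)$ viewed as connections for the matrix Lie group $\rho(G)\subset\GL(\mathbb{V})$, noting that the proof in \cite{CLOP} works for any such group. One small presentational issue: you should not appeal to Corollary~\ref{corollary:rhofaithful} here, since that corollary is derived \emph{from} this theorem in the paper; your actual argument correctly bypasses it and invokes \cite[Theorem 5]{CLOP} directly, so no circularity arises in substance.
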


\begin{proof} We have previously inverted the transform $\mathbf{S}_{z \gets y \gets x}^{A, \text{inc}}$ in the case of $G = U(n)$, see \cite[Theorem 5]{CLOP}, where a slightly different choice of $\mho$ and $\mathbb{D}$ is used.
However, the proof works for any matrix Lie group or any $\rho(G)\subset \GL(\mathbb{V})$, and also for the present choice of $\mho$ and $\mathbb{D}$. Moreover, the gauge $u$ defined in \cite[Lemma 3]{CLOP} is smooth up to $\partial\mathbb D$ whenever the two connections $A$ and $B$ are smooth up to $\partial \mathbb{D}$.
\end{proof}

Since we are interested in the full recovery of the connection, we now give a simple condition on $\rho_{*}$ that allows for this recovery:

\begin{corollary} Suppose $\rho_{*}$ is faithful, i.e. $\text{\rm Ker}\,\rho_{*}=\{0\}$. If
\[\mathbf{S}^{A,\rho}_{z \gets y \gets x}=\mathbf{S}^{B,\rho}_{z \gets y \gets x}\]
for all $(x,y,z)\in  \mathbb S^+(\mho)$, then 
there exists a smooth $\mathbf{U}:\mathbb{D}\to G$ such that $\mathbf{U}|_{\mho}=\id$ and
$B=\mathbf{U}^{-1}d\mathbf{U}+\mathbf{U}^{-1}A\mathbf{U}$.
\label{corollary:rhofaithful}
\end{corollary}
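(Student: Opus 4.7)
The plan is to reduce the corollary to Theorem~\ref{thm:main2} by lifting the gauge transformation from $\rho(G)$ to $G$. By Theorem~\ref{thm:main2} applied to the hypothesis, we obtain a smooth map $\mathbf{U}:\mathbb{D}\to\rho(G)$ with $\mathbf{U}|_{\mho}=\id$ and $\rho_{*}(B)=\mathbf{U}^{-1}d\mathbf{U}+\mathbf{U}^{-1}\rho_{*}(A)\mathbf{U}$. The corollary will follow once I produce a smooth lift $\tilde{\mathbf{U}}:\mathbb{D}\to G$ with $\rho\circ \tilde{\mathbf{U}}=\mathbf{U}$, for then applying $\rho_{*}$ to the candidate identity $B=\tilde{\mathbf{U}}^{-1}d\tilde{\mathbf{U}}+\tilde{\mathbf{U}}^{-1}A\tilde{\mathbf{U}}$ and using $\rho_{*}(\tilde{\mathbf{U}}^{-1}d\tilde{\mathbf{U}})=\mathbf{U}^{-1}d\mathbf{U}$ together with the intertwining relation $\rho_{*}\circ\mathrm{Ad}_{g}=\mathrm{Ad}_{\rho(g)}\circ\rho_{*}$ yields the identity from Theorem~\ref{thm:main2}; injectivity of $\rho_{*}$ then closes the argument.

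To produce $\tilde{\mathbf{U}}$, I observe that the hypothesis $\mathrm{Ker}\,\rho_{*}=\{0\}$ makes the Lie group homomorphism $\rho:G\to \rho(G)$ an immersion, hence a local diffeomorphism. Its kernel is therefore a discrete closed normal subgroup of the compact connected group $G$, and is thus a finite central subgroup. Consequently $\rho:G\to \rho(G)$ is a finite-sheeted covering map. Since $\mathbb{D}$ is a convex subset of $\R^{1+3}$, it is contractible and in particular simply connected, so by standard covering space theory I can lift $\mathbf{U}$ uniquely to a continuous (and then automatically smooth) map $\tilde{\mathbf{U}}:\mathbb{D}\to G$ once I fix $\tilde{\mathbf{U}}(p)=\id$ at the basepoint $p\in\mho$. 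The restriction $\tilde{\mathbf{U}}|_{\mho}$ is then a lift of $\mathbf{U}|_{\mho}=\id$ that agrees with $\id$ at $p$; by uniqueness of lifts on the connected set $\mho$, we conclude $\tilde{\mathbf{U}}|_{\mho}=\id$.

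It then remains only to run the verification sketched above: along any smooth curve, differentiating $\mathbf{U}=\rho(\tilde{\mathbf{U}})$ yields $\mathbf{U}^{-1}d\mathbf{U}=\rho_{*}(\tilde{\mathbf{U}}^{-1}d\tilde{\mathbf{U}})$, and the adjoint intertwining relation gives $\mathbf{U}^{-1}\rho_{*}(A)\mathbf{U}=\rho_{*}(\tilde{\mathbf{U}}^{-1}A\tilde{\mathbf{U}})$. Combining these with the conclusion of Theorem~\ref{thm:main2} shows
\[
\rho_{*}\bigl(B-\tilde{\mathbf{U}}^{-1}d\tilde{\mathbf{U}}-\tilde{\mathbf{U}}^{-1}A\tilde{\mathbf{U}}\bigr)=0,
\]
and the faithfulness of $\rho_{*}$ forces the bracketed expression to vanish. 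The only conceptual step is the existence of the lift, which is routine once simple connectivity of $\mathbb{D}$ and the covering property of $\rho$ are noted; I do not anticipate any substantive obstacle.
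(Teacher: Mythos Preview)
Your proof is correct and follows essentially the same route as the paper: apply Theorem~\ref{thm:main2}, use that $\rho:G\to\rho(G)$ is a covering and $\mathbb{D}$ is simply connected to lift $\mathbf{U}$ to $\tilde{\mathbf{U}}$ with $\tilde{\mathbf{U}}|_{\mho}=\id$, and then verify the gauge relation via the intertwining identities $\rho_{*}(\tilde{\mathbf{U}}^{-1}d\tilde{\mathbf{U}})=\mathbf{U}^{-1}d\mathbf{U}$ and $\rho_{*}\circ\Ad_{\tilde{\mathbf{U}}^{-1}}=\Ad_{\mathbf{U}^{-1}}\circ\rho_{*}$ together with injectivity of $\rho_{*}$. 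The paper phrases the last verification in terms of Maurer--Cartan forms, but the content is identical.
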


\begin{proof} If $\rho_{*}$ is faithful, the map $\rho:G\to\rho(G)$ is a covering of Lie groups ($\rho_{*}$ becomes an isomorphism from $\g$ to the Lie algebra of $\rho(G)$). Since $\mathbb{D}$ is simply connected
there is a unique smooth lift of $\mathbf{U}:\mathbb{D}\to\rho(G)$ to a map
$\tilde{\mathbf{U}}:\mathbb{D}\to G$ such that $\tilde{\mathbf{U}}|_{\mho}=\id$. Now it is easy to check that
$\mathbf{U}$ satisfies $\rho_{*}(B)=\mathbf{U}^{-1}d\mathbf{U}+\mathbf{U}^{-1}\rho_{*}(A)\mathbf{U}$ iff $\tilde{\mathbf{U}}$ satisfies
$B=\tilde{\mathbf{U}}^{-1}d\tilde{\mathbf{U}}+\tilde{\mathbf{U}}^{-1}A\tilde{\mathbf{U}}$.
Indeed, if we let $\omega$ and $\tilde{\omega}$ be the (left) Maurer--Cartan 1-form of $\rho(G)$ and $G$ respectively, then the gauge equivalences may be expressed as 
\[\rho_{*}(B)=\mathbf{U}^*\omega+\text{Ad}_{\mathbf{U}^{-1}}(\rho_{*}(A))\]
and 
\[B=\tilde{\mathbf{U}}^*\tilde{\omega}+\text{Ad}_{\tilde{\mathbf{U}}^{-1}}(A).\]
Using that $\rho^*\omega=\rho_{*}(\tilde{\omega})$, $\rho\circ \tilde{\mathbf{U}}=\mathbf{U}$, 
$\rho_{*}\circ \text{Ad}_{\tilde{\mathbf{U}}^{-1}}=\text{Ad}_{\mathbf{U}^{-1}}\circ\rho_{*}$ (cf. \cite[Proposition 2.1.46]{Ha}) and that $\rho_{*}$ is an isomorphism between the Lie algebra of $G$ and that of $\rho(G)$, the equivalence between the two gauge equivalences follows.
\end{proof}

\begin{remark} One consequence of the argument presented above is that the data set $\mathcal D_{(A,\Phi)}$ does not really depend on the group $G$ as long as it has Lie algebra $\g$.
\end{remark}

\subsection{The parallel transport coupled with Higgs fields}
 
Let us now consider the system \eqref{eq:VJ}--\eqref{eq:PsiF}
for $(V,\Psi)$
and write 
	\begin{align*}
W = V - A, \quad \Upsilon = \Psi - \Phi,
	\end{align*}
where $(A,\Phi)$ is assumed to solve the corresponding homogeneous system \eqref{eq:ymh1}--\eqref{eq:ymh2}.
It turns out that $(W, \Upsilon)$ solves then a system 
with the linear part given by 
    \begin{align*}
P \begin{pmatrix}
W
\\
\Upsilon
\end{pmatrix}
=
\begin{pmatrix}
\Box_{A,\Ad} W + \mathbb J_\rho(d \Upsilon, \Phi)
\\
\Box_{A,\rho} \Upsilon 
\end{pmatrix},
    \end{align*}
whenever $W$ satisfies the relative gauge condition $D_A^* W = 0$. This is discussed in detail in Section \ref{sec_rel_lor} below.
Here we will study the linear operator $P$. 

The operator $P$ has the principal and subprincipal symbols given by (cf. \eqref{sigma_Box}), 
    \begin{align*}
\sigma[P](x,\xi) = \xi^\alpha \xi_\alpha,
\quad
\sigma_{sub}[P](x,\xi)
\begin{pmatrix}
w \\ \upsilon
\end{pmatrix}
= 
\imath^{-1} \begin{pmatrix}
2[\xi^{\alpha}A_\alpha, w_{\beta}]dx^{\beta} -\xi_{\beta} \mathbb J_\rho(\upsilon, \Phi)dx^{\beta} 
\\
2\rho_*(A_\alpha) \xi^\alpha \upsilon
\end{pmatrix},
    \end{align*}
and the associated parallel transport equation along a lightlike line $\gamma$ reads
 \begin{align} 	\dot{w}_{\beta}+\left[A_{\gamma}(\dot{\gamma}),w_{\beta}\right] - \frac12\dot{\gamma}_{\beta}(t)\,\J_{\rho}\left(\upsilon,\Phi(\gamma(t))\right)&=0,\label{eq:imp}\\
 	\dot{\upsilon}+\rho_{*}(A_{\gamma}(\dot{\gamma}))\upsilon&=0, \label{eq:imp2}
 \end{align} for $\beta = 0, 1, 2, 3$. 
  If $\gamma:[0,T]\to M$, then $w_{\beta}:[0,T]\to \g$ for all $\beta$ and $\upsilon:[0,T]\to \mathcal{W}$. The solutions $(w_{\beta}(t), \upsilon(t))$ to \eqref{eq:imp}--\eqref{eq:imp2} are uniquely determined by fixing initial conditions $(w_{\beta}^0, \upsilon^0)$.
 The second equation is decoupled and its fundamental solution is the parallel transport
 $\P_\gamma^{A,\rho} = \rho(\U_\gamma^A)$. Indeed we have
 $\upsilon(t)=\P_\gamma^{A,\rho}\upsilon^0$. By Duhamel's principle, the solution $w_{\beta}(t)$ to \eqref{eq:imp} with initial conditions $(w_{\beta}^0, \upsilon^0)$ is given by
 	\begin{equation}\label{eqn : solution to the transport (YM)}w_{\beta}(t)=\P_\gamma^{A,\text{\rm Ad}}(t)w_{\beta}^{0}   +   \frac12  \P_\gamma^{A,\text{\rm Ad}}(t)\int_{0}^{t}(\P_\gamma^{A,\text{\rm Ad}}(s))^{-1} \dot{\gamma}_{\beta}(s)\J_{\rho}\left(\upsilon(s),\Phi(\gamma(s))\right)\,ds.\end{equation}

Since $\P_\gamma^{A,\text{\rm Ad}}=\text{\rm Ad}_{\U^{A}_{\gamma}}$, we employ the following equivariance property of the coupling product $\J_\rho$ to simplify this formula.
 \begin{lemma} 	For all $g\in G$ and $v,w\in \mathcal{W}$
 	\[\text{\rm Ad}_{g}(\J_{\rho}(v,w))=\J_{\rho}(\rho(g)v,\rho(g)w).\]
 
 	\label{lemma:equivariance}
 \end{lemma}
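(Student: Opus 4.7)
The plan is to unfold the defining identity for $\J_\rho$ and exploit two compatibility properties: (i) the $\mathrm{Ad}$-invariance of $\langle\cdot,\cdot\rangle_{\mathrm{Ad}}$ on $\g$, and (ii) the $G$-invariance of $\langle\cdot,\cdot\rangle_{\mathcal{W}}$, which means $\rho(g)$ is unitary so $\rho(g)^{-1} = \rho(g)^{*}$ (adjoint with respect to the Hermitian inner product). Alongside these I will use the standard identity $\rho_{*}(\mathrm{Ad}_{g}X) = \rho(g)\rho_{*}(X)\rho(g)^{-1}$, obtained by differentiating $\rho(ge^{tX}g^{-1}) = \rho(g)\rho(e^{tX})\rho(g)^{-1}$ at $t=0$.

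Since the bilinear form $\langle\cdot,\cdot\rangle_{\mathrm{Ad}}$ is non-degenerate on $\g$, it suffices to show that pairing both sides of the claimed equality against an arbitrary $X\in\g$ yields the same real number. For the left-hand side, the $\mathrm{Ad}$-invariance gives
\[
\langle \mathrm{Ad}_{g}(\J_{\rho}(v,w)), X\rangle_{\mathrm{Ad}} = \langle \J_{\rho}(v,w), \mathrm{Ad}_{g^{-1}}X\rangle_{\mathrm{Ad}} = \Re\langle v, \rho_{*}(\mathrm{Ad}_{g^{-1}}X)w\rangle_{\mathcal{W}}.
\]
Applying the conjugation identity yields
\[
\Re\langle v, \rho(g^{-1})\rho_{*}(X)\rho(g)w\rangle_{\mathcal{W}} = \Re\langle \rho(g)v, \rho_{*}(X)\rho(g)w\rangle_{\mathcal{W}},
\]
where in the last step I move $\rho(g^{-1})$ to the other entry using that $\rho(g)$ preserves $\langle\cdot,\cdot\rangle_{\mathcal{W}}$. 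By the definition of $\J_{\rho}$ again, this last expression equals $\langle \J_{\rho}(\rho(g)v,\rho(g)w), X\rangle_{\mathrm{Ad}}$, which is what we wanted.

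There is no real obstacle here: the statement is purely algebraic and the proof is a one-line chain of identities once the conjugation formula for $\rho_{*}\circ\mathrm{Ad}_{g}$ is recorded. The only point requiring a modicum of care is the passage $\rho(g^{-1}) = \rho(g)^{*}$ under the Hermitian pairing, which uses that $\rho$ is unitary (guaranteed by the $G$-invariance assumption on $\langle\cdot,\cdot\rangle_{\mathcal{W}}$); this is also the reason the identity holds on the nose without a complex conjugate appearing on $g$.
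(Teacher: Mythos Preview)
Your proof is correct and follows essentially the same chain of identities as the paper: test against an arbitrary $X\in\g$, use $\mathrm{Ad}$-invariance to move $\mathrm{Ad}_g$ across, apply the definition of $\J_\rho$, invoke $\rho_*(\mathrm{Ad}_{g^{-1}}X)=\rho(g^{-1})\rho_*(X)\rho(g)$, and then use the $G$-invariance of $\langle\cdot,\cdot\rangle_{\mathcal W}$ to shift $\rho(g^{-1})$ to the other slot. The paper's proof is literally this sequence of equalities, citing the conjugation identity from a textbook rather than deriving it.
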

 \begin{proof} For any $X\in\g$ we have, using invariance of the inner products,
 	\begin{align*}
 		&(\text{\rm Ad}_{g}(\J_{\rho}(v,w)), X)_{\text{\rm Ad}}=(\J_{\rho}(v,w), \text{\rm Ad}_{g^{-1}}X)_{ \text{\rm Ad}}
 		=\Re (v,\rho_{*}(\text{\rm Ad}_{g^{-1}}(X))w)_{ \mathcal{W}}\\&\qquad
 		=\Re (v,\text{\rm Ad}_{\rho(g^{-1})}(\rho_{*}(X))w)_{ \mathcal{W}}
		=\Re (v,\rho(g^{-1})\rho_{*}(X)\rho(g)w)_{ \mathcal{W}}
		\\&\qquad
 		=\Re (\rho(g)v,\rho_{*}(X)\rho(g)w)_{ \mathcal{W}}
 		=(\J_{\rho}(\rho(g)v,\rho(g)w),X)_{ \text{\rm Ad}},
 	\end{align*}
 	as desired (in the third equality, we used \cite[Proposition 2.1.46]{Ha}).
 	\end{proof}
It follows that \eqref{eqn : solution to the transport (YM)} is reduced to
 \begin{equation}
 	w_{\beta}(t)=\P_\gamma^{A,\text{\rm Ad}}(t)w_{\beta}^{0}+ \frac12 \P_\gamma^{A,\text{\rm Ad}}(t)\int_{0}^{t}\dot{\gamma}_{\beta}(s)\J_{\rho}(\upsilon^0,\rho(\U^{A}_{\gamma}(s))^{-1}\Phi(\gamma(s)))\,ds.
 	\label{eq:wbeta}
 \end{equation}
 If $x=\gamma(0)$ and $y=\gamma(T)$ we define the following linear transformation
 \[\mathbf{P}_{y\gets x}^{A,\Phi,\rho} (w_{\beta}^{0},\upsilon^0):= (w_{\beta}(T),\upsilon(T)).\]
One can check that this map is independent of the parametrization chosen for $\gamma$ thanks to the presence of the term $\dot{\gamma}_{\beta}$ in \eqref{eq:wbeta}. We note that $\mathbf{P}_{y\gets x}^{A,\Phi,\rho} $ depends on $\beta$ even though we do not indicate this explicitly in order to simplify the notation.

Using \eqref{eq:wbeta}, we may represent $\P_\gamma^{A, \Phi, \rho}$ from $x = \gamma(t_1)$ to $y = \gamma(t_2)$ in terms of an upper triangular block matrix, 
\begin{equation*}
	\mathbf{P}_{y\gets x}^{A, \Phi, \rho} = \left( 
	\begin{array}{cc}
		\P_{y\gets x}^{A,\text{\rm Ad}}   & \left(\mathbf{P}_{y\gets x}^{A, \Phi, \rho}\right)_{12}
		\\ & \\
		0 & \P_{y\gets x}^{A, \rho}   
	\end{array}\right), 
\end{equation*} where the $(1,2)$-entry of $\mathbf{P}_{y\gets x}^{A, \Phi, \rho}$ operates as
\[ \begin{aligned} & \left( 	\P_{y\gets x}^{A,\Phi,\rho}   \right)_{12} :   \mathcal W \to\g  \\  & \left( 	\P_{y\gets x}^{A,\Phi,\rho}   \right)_{12}(\upsilon) =	\frac12\P_{y\gets x}^{A,\text{\rm Ad}} \int_{t_1}^{t_2}\dot{\gamma}_{\beta}(s) \J_{\rho}(\upsilon ,\rho(\U^{A}_{\gamma}(s))^{-1}\Phi(\gamma(s)))\,ds. \end{aligned} \]

\begin{remark} For each $\beta$ fixed, it is possible to interpret the linear transformation $\mathbf{P}_{y\gets x}^{A, \Phi, \rho}$ as the parallel transport of a suitable connection constructed from $A$ and $\Phi$. Indeed consider the $\text{End}(\g\oplus \mathcal W)$-valued connection $\mathbb{A}$ given by
\[\mathbb{A}_{x}(u)(X,\upsilon):=\left ([A_{x}(u),X]-\frac{1}{2}u_{\beta}\mathbb{J}_{\rho}(\upsilon, \Phi(x)), \rho_{*}(A_{x}(u))\upsilon\right )\in \g\oplus\mathcal W,\]
where $u\in T_{x}M$ and $(X,\upsilon)\in \g\oplus \mathcal W$. It is a simple exercise to check that $\mathbf{P}_{y\gets x}^{A, \Phi, \rho}=\mathbf{P}_{y\gets x}^{\mathbb{A}}$. This implies in particular that $\mathbf{P}_{y\gets x}^{A, \Phi, \rho}$ is independent of the parametrization of $\gamma$ as mentioned above.
\end{remark}
 
Suppose now that $P(W,\Upsilon) = 0$ and that $(W,\Upsilon)$ is a conormal distribution associated to a submanifold $K$. 
Let $\gamma(t) = t \xi + x$ be a lightlike line such that the segment from $\gamma(0) = x$ to $\gamma(s) = y$ is contained in $K$
for some $s \in \R$.
Then the principal symbol of $(W,\Upsilon)$ evolves along $\gamma$ analogously to (\ref{pt_symbol_rho}), as recorded in the following proposition. 
The proposition tells also that the evolution preserves homogeneity.

\begin{proposition}\label{prop_homogeneity}
If the principal symbol $\sigma[(W, \Upsilon)]$ is positively homogeneous of degree $q + 1/2$ at $(x,\xi)$ as a half density, that is,
$$\sigma[(W, \Upsilon)] (x, \lambda \xi) = \lambda^q \sigma[(W, \Upsilon)] (x, \xi) \quad \mbox{for any $\lambda > 0$},$$
then we have 
\HOX{Here $\mathbf{P}_{y\gets x}^{A, \Phi, \rho}$ acts on all components simultaneously. We need to fix the notations better.}
\begin{align}
	e^{\varrho(s)}(\omega^{-1}\sigma[(W,\Upsilon)])(y, \lambda \xi) = \lambda^q  \mathbf{P}_{y\gets x}^{A, \Phi, \rho} ((\omega^{-1}\sigma[(W,\Upsilon)])(x, \xi) ), \quad &\mbox{for any $\lambda > 0$}.
\end{align}
\end{proposition}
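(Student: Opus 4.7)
The strategy is to combine the standard propagation of the principal symbol in the spirit of \eqref{pt_symbol_rho} with parameterization invariance and the given homogeneity assumption. I would break the argument into three clean steps.

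First, at fixed $\xi$, I would apply the real principal-type symbol calculus of Duistermaat--H\"ormander to $P$ using the explicit expressions for $\sigma[P]$ and $\sigma_{sub}[P]$ recorded immediately above the proposition. The induced first-order ODE on the bicharacteristic lift $t\mapsto(\gamma(t),\dot\gamma_\alpha(t)\,dx^\alpha)$ is exactly the coupled linear system \eqref{eq:imp}--\eqref{eq:imp2}, whose fundamental solution was just identified with the block-triangular operator $\mathbf{P}_{y \gets x}^{A,\Phi,\rho}$. The trivialization of the half-density bundle by $\omega$ produces the factor $e^{\varrho(s)}$ via integration of $\omega^{-1}\mathscr{L}_{H_{\sigma[\Box]}}\omega$ as in \eqref{def_varrho}. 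This gives the conclusion for $\lambda = 1$.

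Second, I would run the same argument along the reparameterized null line $\tilde\gamma(t) = t\lambda\xi + x$, which reaches $y$ at parameter $s/\lambda$. Two scaling facts make the resulting identity coincide with the one at scale $\lambda$: (a) $\mathbf{P}_{y \gets x}^{A,\Phi,\rho}$ is invariant under orientation-preserving reparameterization of $\gamma$; for the diagonal blocks $\mathbf{P}^{A,\Ad}$ and $\mathbf{P}^{A,\rho}$ this is classical, and for the mixed $(1,2)$-block in \eqref{eq:wbeta} the factor $\dot\gamma_\beta(s)\,ds$ is itself invariant, as already emphasized in the text. (b) The accumulated phase $\varrho(s)$ is also unchanged: since $\omega$ is positively homogeneous of degree $1/2$ and $H_{\sigma[\Box]} = 2\xi^\alpha\partial_\alpha$ is homogeneous of degree $1$ in $\xi$, the integrand $\omega^{-1}\mathscr{L}_{H_{\sigma[\Box]}}\omega$ is homogeneous of degree $1$, and the substitution $u = \lambda r$ in \eqref{def_varrho} absorbs the excess factor of $\lambda$. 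Combining (a) and (b) with the transport formula from the first step applied at $(x,\lambda\xi)$ yields
\[
e^{\varrho(s)}(\omega^{-1}\sigma[(W,\Upsilon)])(y,\lambda\xi) = \mathbf{P}_{y \gets x}^{A,\Phi,\rho}\bigl((\omega^{-1}\sigma[(W,\Upsilon)])(x,\lambda\xi)\bigr).
\]

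To conclude, I would invoke the homogeneity hypothesis. As $\sigma[(W,\Upsilon)]$ has degree $q + 1/2$ and $\omega$ has degree $1/2$, the scalar $\omega^{-1}\sigma[(W,\Upsilon)]$ has degree $q$ at $(x,\xi)$, so $(\omega^{-1}\sigma[(W,\Upsilon)])(x,\lambda\xi) = \lambda^q(\omega^{-1}\sigma[(W,\Upsilon)])(x,\xi)$. Linearity of $\mathbf{P}_{y \gets x}^{A,\Phi,\rho}$ then pulls the scalar $\lambda^q$ out of the right-hand side, delivering the claim. I expect the main delicate point to be the bookkeeping in the second step: one must ensure that the $(1,2)$-block with its $\dot\gamma_\beta$ factor and the exponent $\varrho$ transform in exactly the compensating way under $\xi\mapsto\lambda\xi$, so that passing from the bicharacteristic generated by $\xi$ to the one generated by $\lambda\xi$ leaves the non-symbol side of the transport formula untouched.
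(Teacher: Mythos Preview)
Your proposal is correct and follows essentially the same route as the paper's own proof: establish the $\lambda=1$ case via the Duistermaat--H\"ormander transport equation reducing to \eqref{eq:imp}--\eqref{eq:imp2}, then reparametrize by $\lambda$ and use invariance of both $\varrho$ and $\mathbf{P}_{y\gets x}^{A,\Phi,\rho}$ under reparametrization, finally invoking homogeneity and linearity. The paper cites \cite[Proposition~1]{CLOP} for the reparametrization invariance of $\varrho$ rather than re-deriving it via the homogeneity argument you sketch, but otherwise the arguments coincide.
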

\begin{proof}
It follows from \cite{Duistermaat-Hormander-FIO2}
that the principal symbol $a = \sigma[(W, \Upsilon)]$ 
satisfies the transport equation
	\begin{align}\label{Lie_transport}
\mathscr L_{H_{\sigma[\Box]}} a + \imath \sigma_{\mathrm{sub}}[P] a = 0,
	\end{align}
see also \cite[Proposition 5.4]{Melrose-Uhlmann-CPAM1979}
and \cite[Theorem 3]{CLOP} for formulations that are close to the present case. 
We write 
	\begin{align*}
\Gamma : [0, s] \to N^* K, \quad
\Gamma(t) = (\gamma(t), \dot \gamma_\alpha(t) dx^\alpha),
	\end{align*}
and $(w, \upsilon) = e^\varrho \omega^{-1} a \circ \Gamma$,
where $\varrho$ is as in \eqref{def_varrho}.
Then the argument in \cite[Section 2.6]{CLOP} gives 
	\begin{align*}
 e^{\varrho} \omega^{-1} \mathscr L_{H_{\sigma[\Box]}} a \circ \Gamma
= 2\p_t 
\begin{pmatrix}
w \\ \upsilon
\end{pmatrix}.
	\end{align*}
Moreover,
	\begin{align*}
e^{\varrho} \omega^{-1} \imath \sigma_{\mathrm{sub}}[P] a \circ \Gamma
= 
\begin{pmatrix}
2[\dot \gamma^{\alpha}A_\alpha, w_{\beta}]dx^{\beta} -\dot \gamma_{\beta} \mathbb J_\rho(\upsilon, \Phi)dx^{\beta} 
\\
2\rho_*(A_\alpha) \dot \gamma^\alpha \upsilon
\end{pmatrix},
	\end{align*}
and we see that $(w, \upsilon)$ satisfies the transport equation \eqref{eq:imp}--\eqref{eq:imp2}. 
This shows the claim in the case that $\lambda = 1$.

Let us consider arbitrary $\lambda > 0$, and set $\gamma_\lambda(t) = t \lambda \xi + x$.
Then $\gamma_\lambda(0) = x$ and $\gamma_\lambda(s/\lambda) = y$.
We showed as a part of \cite[Proposition 1]{CLOP}
that $\varrho$ is independent of the parametrization of $\gamma$
in the sense that if $\varrho_\lambda$ is defined by \eqref{def_varrho}, with $\gamma$ replaced by $\gamma_\lambda$, then 
$\varrho_\lambda(s / \lambda) = \varrho(s)$.
Applying the above argument to $\gamma_\lambda$ gives
	\begin{align*}
e^{\varrho(s)}(\omega^{-1}\sigma[(W,\Upsilon)])(y, \lambda \xi) = \mathbf{P}_{y\gets x}^{A, \Phi, \rho} ((\omega^{-1}\sigma[(W,\Upsilon)])(x, \lambda\xi) ).
	\end{align*}
Here we used the fact that $\mathbf{P}_{y\gets x}^{A, \Phi, \rho}$ is independent of the parametrization of $\gamma$.
The claim follows from the assumed homogeneity of $\sigma[(W,\Upsilon)]$, together with the homogeneity of $\omega$ and linearity of $\mathbf{P}_{y\gets x}^{A, \Phi, \rho}$. 
\end{proof}

 \section{The perturbed Yang--Mills--Higgs system}\label{section:perturbedYMH} Suppose $(A,\Phi)$ is a background Yang--Mills--Higgs field on the fibre bundles $\Ad$ and $E$ over $M$ for a given $\rho$. We want to reconstruct $(A,\Phi)$ in the diamond $\mathbb{D}$ by doing active local measurements in $\mho$.
That is, we add sources $(J,\mathcal{F})$ with $J\in \Omega^{1}(M,\g)$ and $\mathcal{F}\in C^{\infty}(M,\mathcal{W})$, both supported in $\mho$, in the right-hand side of \eqref{eq:ymh1}--\eqref{eq:ymh2}. Then the Yang--Mills--Higgs field $(A, \Phi)$ will be perturbed by a quantity $(W, \Upsilon)$ and the perturbed field $(V, \Psi)$, with $V=A+W$ and $\Psi=\Phi+\Upsilon$, solves the system
 \begin{align}
 	&D_{V}^*F_{V}+\J_{\rho}(d_{V}\Psi,\Psi)=J;\label{eq:ymhs1}\\
 	&d_{V}^*d_{V}\Psi +\mathcal{V}'(|\Psi|^{2})\Psi=\mathcal{F}.\label{eq:ymhs2}
 \end{align} 
The active local measurements $\mathcal{D}_{(A, \Phi)}$ of the background Yang--Mills--Higgs field can be reduced to the source-to-solution map of the perturbed Yang--Mills--Higgs system \eqref{eq:ymhs1}--\eqref{eq:ymhs2}, which we shall define in this section.

 \subsection{Compatibility condition}
The sources $(J, \mathcal{F})$ in \eqref{eq:ymhs1}--\eqref{eq:ymhs2} cannot be arbitrarily chosen but must satisfy an additional compatibility condition. \begin{lemma}\label{lem_comp_cond}
 Suppose $(V, \Psi)$ and $(J, \mathcal{F})$ solve \eqref{eq:ymhs1}--\eqref{eq:ymhs2}. Then the sources $(J,\mathcal{F})$ satisfy the compatibility condition \begin{equation}D_{V}^*J=\J_{\rho}(\mathcal{F},\Psi).\label{eq:compatability for ymhs}\end{equation}
\end{lemma}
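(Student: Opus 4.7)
The plan is to apply $D_V^\ast$ to the first equation \eqref{eq:ymhs1} and reduce everything to two algebraic identities, then close the loop by substituting the Higgs equation \eqref{eq:ymhs2} and invoking the antisymmetry of $\J_\rho$ from Remark~\ref{rem_antisymmetry}.

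Concretely, applying $D_V^\ast$ to \eqref{eq:ymhs1} yields
\[
D_V^\ast D_V^\ast F_V + D_V^\ast\J_\rho(d_V\Psi,\Psi) = D_V^\ast J,
\]
so it suffices to prove the following two claims:
\textbf{(i)} $D_V^\ast D_V^\ast F_V = 0$, and
\textbf{(ii)} $D_V^\ast\J_\rho(d_V\Psi,\Psi) = \J_\rho(d_V^\ast d_V\Psi,\Psi)$.
Granted these, substituting \eqref{eq:ymhs2} gives
\[
D_V^\ast J = \J_\rho\bigl(\mathcal F - \mathcal V'(|\Psi|^2)\Psi,\Psi\bigr) = \J_\rho(\mathcal F,\Psi) - \mathcal V'(|\Psi|^2)\J_\rho(\Psi,\Psi),
\]
and the last term vanishes by antisymmetry of $\J_\rho$.

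For \textbf{(i)}, I would pair against an arbitrary test section $\chi\in C^\infty(M,\g)$ and move $D_V^\ast D_V^\ast$ onto the other side. Using $D_V^2\chi = [F_V,\chi]$ (which follows directly from $F_V = dV + \tfrac12[V,V]$ plus Jacobi), this reduces to showing
\[
\int_M F_V\,\dot\wedge\star\,[F_V,\chi] = 0.
\]
Expanding in an orthonormal basis $\{e_a\}$ of $\g$, the integrand becomes $f^{\,c}_{ab}\,\chi^b\,F^c\wedge\star F^a$, where $F^c\wedge\star F^a$ is symmetric in $(a,c)$ while the structure constants $f_{abc}$ are totally antisymmetric by Ad-invariance of the inner product; the pairing therefore vanishes pointwise.

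For \textbf{(ii)}, the core computation is the Leibniz-type identity
\[
\rho_\ast(D_V\chi)\Psi = d_V(\rho_\ast(\chi)\Psi) - \rho_\ast(\chi)\, d_V\Psi,
\]
which is verified directly using $[\rho_\ast(V),\rho_\ast(\chi)] = \rho_\ast([V,\chi])$. Pairing $D_V^\ast\J_\rho(d_V\Psi,\Psi)$ against $\chi$ and using the definition of $\J_\rho$ gives
\[
\int_M \Re\pair{d_V\Psi,\rho_\ast(D_V\chi)\Psi}_{\mathcal W}\,d\mathrm{vol},
\]
and the Leibniz identity splits this into two pieces: one becomes $\int\Re\pair{d_V^\ast d_V\Psi,\rho_\ast(\chi)\Psi}_{\mathcal W}$, which is precisely $\int\pair{\J_\rho(d_V^\ast d_V\Psi,\Psi),\chi}_{\mathrm{Ad}}$; the other is $\int\Re\pair{d_V\Psi,\rho_\ast(\chi)d_V\Psi}_{\mathcal W}$, which vanishes because $\rho_\ast(\chi)$ is skew-Hermitian (Remark~\ref{rem_antisymmetry}) and the metric contraction on the form indices is symmetric.

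The main obstacle is the bookkeeping in \textbf{(ii)} — keeping track of how $d_V$, $\rho_\ast$, and the 1-form index on $d_V\Psi$ interact — but this is a finite set of manipulations rather than a genuine difficulty. The identity \textbf{(i)} is essentially the classical statement that $D_A^\ast D_A^\ast F_A = 0$ for Yang--Mills, and \textbf{(ii)} together with antisymmetry of $\J_\rho$ is the Noether identity encoding gauge invariance of the YMH Lagrangian.
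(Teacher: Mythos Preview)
Your proof is correct, but it takes a different route from the paper's. The paper argues via the Noether identity directly: it differentiates the gauge invariance $\mathcal{L}_{\text{YMH}}(V\cdot\mathbf{U}_s,\rho(\mathbf{U}_s^{-1})\Psi)\equiv\mathcal{L}_{\text{YMH}}(V,\Psi)$ along a one-parameter family $\mathbf{U}_s$ with $\dot{\mathbf{U}}=\partial_s\mathbf{U}_s|_{s=0}$, obtaining
\[
(D_V^\ast F_V+\J_\rho(d_V\Psi,\Psi),D_V\dot{\mathbf{U}})_{L^2,\Ad}-\Re(d_V^\ast d_V\Psi+\mathcal V'(|\Psi|^2)\Psi,\rho_\ast(\dot{\mathbf{U}})\Psi)_{L^2,E}=0,
\]
and then substitutes \eqref{eq:ymhs1}--\eqref{eq:ymhs2}. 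Your approach instead applies $D_V^\ast$ directly and proves the two algebraic identities \textbf{(i)} and \textbf{(ii)} by hand. Both arguments rest on the same ingredients (the Bianchi-type identity $D_V^2=[F_V,\cdot]$, the Leibniz rule for $\rho_\ast$, skew-Hermiticity of $\rho_\ast(\chi)$), just packaged differently. The paper's route is shorter and more conceptual; yours is self-contained and does not invoke the Lagrangian. It is worth noting that your computation \textbf{(ii)} is essentially the one the paper carries out later in the proof of Lemma~\ref{lem_undo_gauge}, and your identity \textbf{(i)} is cited there as well (referencing \cite[Lemma~2]{CLOP2}), so the paper certainly endorses the individual steps.
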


\begin{proof}

For a pair $(V, \Psi) \in \Omega^1(M, \g) \times C^\infty(M, \mathcal{W})$, the Yang--Mills--Higgs Lagrangian density $\mathcal{L}_{\text{YMH}}(V, \Psi)$, defined as in \eqref{eqn : YMH Lagrangian}, is gauge invariant, i.e. \[\mathcal{L}_{\text{YMH}}\left(V \cdot\mathbf{U}, \rho(\mathbf{U}^{-1})\Psi\right) = \mathcal{L}_{\text{YMH}}\left(V, \Psi\right).\]

We take a family of gauges    \[\left\{\mathbf{U}_s \in C^\infty(M, G) : \mathbf{U}_0 = \id, \, \partial_s \mathbf{U}_s |_{s=0} = \dot{\mathbf{U}} \in C_0^\infty(M, \g), \, s \in \mathbb{R}\right\}.\] 
It follows from the precise form of gauge transforms \eqref{eqn : gauge transform} that \begin{align*}
 	\partial_s (V \cdot \mathbf{U}_s) |_{s=0} &= d \dot{\mathbf{U}} + [V, \dot{\mathbf{U}}] = D_V \dot{\mathbf{U}}\\
 	\partial_s (\rho(\mathbf{U}_s^{-1}) \Psi) |_{s=0} &= - \rho_\ast(\dot{\mathbf{U}}) \Psi.
 \end{align*}
The gauge invariance of $\mathcal{L}_{\text{YMH}}(V, \Psi)$ implies that  \[\partial_s \mathcal{L}_{\text{YMH}}(V \cdot \mathbf{U}_s, \rho(\mathbf{U}_s^{-1})\Psi) \equiv 0,\] which, in terms of covariant derivatives, takes the form 
\[ ( D_V^\ast F_V + \mathbb{J}_\rho(d_V \Psi, \Psi), D_V \dot{\mathbf{U}} )_{L^2,\Ad} + \Re ( d_V^\ast d_V \Psi + \mathcal{V}'(|\Psi|^2) \Psi, - \rho_\ast(\dot{\mathbf{U}})\Psi )_{L^2, E} =0.\]
If $(V, \Psi)$ solve \eqref{eq:ymhs1}--\eqref{eq:ymhs2}, we have \[ ( D_V^\ast J, \dot{\mathbf{U}} )_{L^2,\Ad} -  ( \mathbb{J}_\rho(\mathcal{F}, \Psi), \dot{\mathbf{U}} )_{L^2,\Ad} = 0,\] which is equivalent to \eqref{eq:compatability for ymhs}.
\end{proof}

\subsection{Temporal gauge}
In this section we write $(x^0, x^1, x^2, x^3) = (t,x) \in \R^{1+3}$ for the Cartesian coordinates.
A connection $A \in \Omega^1(M;\mathfrak{g})$ is said to be in the {\it temporal gauge} if $A_0 = 0$ where $A = A_\alpha dx^\alpha$.

For a connection $V \in \Omega^1(\mathbb D; \mathfrak g)$ we define a connection $\mathscr T(V)$ in the temporal gauge by
    \begin{align}\label{temporal_U}
\mathscr T(V) = V \cdot \U,
\quad \text{where} \quad
\begin{cases}
\p_t \U = -V_0 \U,
\\
\U|_{t = \psi(x)} = \id,
\end{cases}
    \end{align}
and $\psi(x) = |x|-1$.
Observe that $\{(t,x) \in \mathbb D : t = \psi(x)\} = \p^- \mathbb D$ and $\U \in G^0(\mathbb D,p)$.
Occasionally we write also for $(V, \Psi) \in C^\infty(\mathbb D;T^*\mathbb D\otimes\mathfrak{g} \oplus \mathcal W)$
	\begin{align*}
\mathscr T(V, \Psi) = (V, \Psi) \cdot \U,
	\end{align*}
where $\U$ is as above. 

\begin{proposition}\label{prop_tempg_uniq}
Let $(V^{(j)}, \Psi^{(j)}) \in C^3(\mathbb D;T^*\mathbb D\otimes\mathfrak{g} \oplus \mathcal W)$, $j=1,2$, satisfy
\eqref{eq:ymhs1}--\eqref{eq:ymhs2}
in $\mathbb D$ with the same right-hand side $(J,\mathcal F)$,
and suppose that 
there is $\U \in G^0(\mathbb D, p)$ such that 
    \begin{align}\label{gauge_equiv_temp}
(V^{(1)}, \Psi^{(1)}) = (V^{(2)}, \Psi^{(2)}) \cdot \U 
    \end{align}
near $\p^- \mathbb D$.
Then we have the following uniqueness results:
\begin{itemize}
\item[(i)] If $(J, \mathcal F) = 0$ in $\mathbb D$ then 
$(V^{(1)}, \Psi^{(1)}) \sim (V^{(2)}, \Psi^{(2)})$
in $\mathbb D$.
\item[(ii)] If $(J, \mathcal F) = 0$
in $\mathbb D \setminus \mho$, $\U = \id$ in $\mho$ near $\p^- \mathbb D$, and if both $V^{(j)}$, $j=1,2$, are in the temporal gauge, then $\U$ does not depend on $t$, and \eqref{gauge_equiv_temp} holds in $\mathbb D$.
\end{itemize}
\end{proposition}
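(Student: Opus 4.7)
The strategy is to reduce both parts to a uniqueness result for the Cauchy problem associated to \eqref{eq:ymhs1}--\eqref{eq:ymhs2} in the temporal gauge. In this gauge, the YMH system decouples into a semi-linear hyperbolic wave system for the spatial components of $V$ and for $\Psi$, while the remaining $0$-component of \eqref{eq:ymhs1} becomes a Gauss law constraint, which is propagated in time by the compatibility identity of Lemma \ref{lem_comp_cond}. Thus, once the system is placed in the temporal gauge, standard energy estimates and finite speed of propagation for semi-linear wave equations will govern the uniqueness.

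For part (ii), taking the $0$-component of $V^{(1)} = \mathbf{U}^{-1} d\mathbf{U} + \mathbf{U}^{-1} V^{(2)} \mathbf{U}$ and using $V^{(j)}_0 = 0$ forces $\p_t \mathbf{U} = 0$ on a neighborhood of $\p^-\mathbb{D}$. I then introduce the candidate time-independent extension
$$\widetilde{\mathbf{U}}(t,x) := \mathbf{U}(\psi(x), x),$$
which agrees with $\mathbf{U}$ near $\p^-\mathbb{D}$ and equals the identity throughout $\mho$: indeed, for $|x| < \varepsilon_0$ the point $(\psi(x),x)$ lies in $\bar{\mho}\cap\p^-\mathbb{D}$, where the hypothesis $\mathbf{U}=\id$ in $\mho$ near $\p^-\mathbb{D}$ applies. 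Since $\widetilde{\mathbf{U}}$ is time-independent and $V^{(2)}_0 = 0$, the transformed pair $(V^{(3)},\Psi^{(3)}) := (V^{(2)},\Psi^{(2)})\cdot\widetilde{\mathbf{U}}$ remains in the temporal gauge, and because $\widetilde{\mathbf{U}}=\id$ on $\supp(J,\mathcal{F})\subset\mho$, the gauge-transformed source equals the original $(J,\mathcal{F})$. Near $\p^-\mathbb{D}$ the pairs $(V^{(1)},\Psi^{(1)})$ and $(V^{(3)},\Psi^{(3)})$ coincide, so choosing a spacelike slice inside that neighborhood as initial surface and invoking Cauchy uniqueness for the temporal-gauge system propagates the equality to all of $\mathbb{D}$. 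Finally, the identity $V^{(1)} = V^{(2)}\cdot\widetilde{\mathbf{U}}$ combined with the ODE $d\mathbf{U} = \mathbf{U}V^{(1)} - V^{(2)}\mathbf{U}$ and the pointed condition $\mathbf{U}(p)=\id$ (which $\widetilde{\mathbf{U}}$ also satisfies) forces $\mathbf{U}=\widetilde{\mathbf{U}}$ throughout $\mathbb{D}$, yielding both the time-independence of $\mathbf{U}$ and the validity of \eqref{gauge_equiv_temp} on the whole diamond.

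For part (i), I first apply the temporal-gauge map $\mathscr{T}$ of \eqref{temporal_U} to both $(V^{(j)},\Psi^{(j)})$, producing pairs $\mathscr{T}(V^{(j)},\Psi^{(j)}) = (V^{(j)},\Psi^{(j)})\cdot\mathbf{U}^{(j)}_T$ that solve \eqref{eq:ymhs1}--\eqref{eq:ymhs2} with zero source in the temporal gauge. The hypothesis translates into a gauge equivalence of these temporal solutions near $\p^-\mathbb{D}$ by $\widehat{\mathbf{U}} := (\mathbf{U}^{(2)}_T)^{-1}\mathbf{U}\,\mathbf{U}^{(1)}_T$. With zero source, the condition $\widehat{\mathbf{U}}=\id$ on $\supp(J,\mathcal{F})$ required in (ii) is vacuous, so the construction of a time-independent extension and the Cauchy uniqueness step of (ii) apply verbatim to give global equivalence of the two temporal-gauge solutions. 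Undoing the gauges $\mathbf{U}^{(j)}_T$ then yields $(V^{(1)},\Psi^{(1)}) \sim (V^{(2)},\Psi^{(2)})$ on $\mathbb{D}$.

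The main obstacle is establishing the uniqueness for the Cauchy problem of the YMH system in the temporal gauge with data on a spacelike slice close to $\p^-\mathbb{D}$. Once in the temporal gauge, the difference of two solutions satisfies a linear hyperbolic system to which standard energy estimates apply, but one must carefully verify that the Gauss law constraint is transported by the evolution (which follows from Lemma \ref{lem_comp_cond}), and that a spacelike slice $\{t = -1+\delta\}$ with $\delta>0$ sufficiently small inside the neighborhood of agreement has forward domain of dependence covering all of $\mathbb{D}$; the latter is guaranteed by the diamond geometry, possibly after iterating the energy estimate on a finite family of slices.
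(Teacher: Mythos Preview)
Your proposal is correct and follows essentially the same route as the paper: pass to temporal gauge, extend the intertwining gauge as a time-independent map, and invoke Cauchy uniqueness for the resulting semilinear wave system. Two small imprecisions are worth flagging. First, in part (ii) your final ODE argument does not show $\mathbf U=\widetilde{\mathbf U}$ on all of $\mathbb D$: the relation $d\mathbf U=\mathbf U V^{(1)}-V^{(2)}\mathbf U$ is only assumed near $\partial^-\mathbb D$, so away from there $\mathbf U$ is unconstrained. What you have actually proved (and what the paper proves and uses) is that the time-independent $\widetilde{\mathbf U}$, which agrees with $\mathbf U$ near $\partial^-\mathbb D$, realises the gauge equivalence globally; the paper's phrasing simply identifies $\mathbf U$ with this extension. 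Second, in part (i) you cannot invoke (ii) as a black box, since the hypothesis $\widehat{\mathbf U}=\id$ in $\mho$ near $\partial^-\mathbb D$ is not verified; what works (and what the paper does) is to rerun the construction of (ii) directly, noting that with $(J,\mathcal F)=0$ the source is trivially gauge-invariant. For the uniqueness step itself, the paper differentiates the temporal-gauge equations once in $t$ to obtain wave equations for $(\partial_t A_j,\partial_t\Phi)$ coupled to $(A_j,\Phi)$ via $\partial_t$, and then applies \cite[Lemma 14]{CLOP2}; this is a concrete realisation of your ``standard energy estimates'' sketch.
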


The proof of Proposition \ref{prop_tempg_uniq}
is very similar to that of \cite[Proposition 2]{CLOP2}.
However, for the convenience of the reader, we outline it. 
Let 
    \begin{align*}
(A, \Phi) \in C^3(\mathbb D;T^*\mathbb D\otimes\mathfrak{g} \oplus \mathcal W)
    \end{align*}
and suppose that $A$ is in the temporal gauge.
Write $D_{A}^*F_{A}+\J_{\rho}(d_{A}\Phi,\Phi) = J$.
Then 
    \begin{align*}
D_A^* F_A
&= \left(
\partial_\beta (\partial^\alpha A_\alpha)
-\partial^\alpha \partial_\alpha A_\beta
- [\partial^\alpha  A_\alpha, A_\beta]\right.
\\&\qquad \left.
- 2 [A^\alpha, \partial_\alpha A_\beta]
+ [A^\alpha, \partial_\beta A_\alpha]
- [A^\alpha, [A_\alpha, A_\beta]]\right) dx^\beta,
    \end{align*}
see e.g.   \cite[Lemma 12]{CLOP2}.
Taking $\beta = 0$ we get the constraint equation
    \begin{align}\label{constraint}
\p_0 (\partial^a A_a) + \tilde N_0 = J_0,
    \end{align}
with $a=1,2,3$, and taking $\beta = j = 1,2,3$ we get
    \begin{align}\label{YM_red}
\partial_j(\partial^a A_a)
-\partial^\alpha \partial_\alpha A_j
+ \tilde N_j = J_j.
    \end{align}
Here $\tilde N_\alpha$, $\alpha=0,1,2,3$, contain the terms that are of order one and zero,
    \begin{align*}
\tilde N_0 =
&
[A^a, \partial_0 A_a] + \J_\rho(\p_0 \Phi, \Phi),
\\
\tilde N_j =
&
-[\partial^a  A_a, A_j] - 2 [A^a, \partial_a A_j] + [A^a, \partial_j A_a] - [A^a, [A_a, A_j]] 
\\&\quad
+ \J_\rho\left(\p_j \Phi + \rho_*(A_j) \wedge \Phi, \Phi\right).
    \end{align*}
In the remainder of this section, we will use systematically Greek letters for indices over $0,1,2,3$ and Latin letters for $1,2,3$.

We differentiate (\ref{constraint}) using $\p_j$ and (\ref{YM_red}) using $\p_0$, and eliminate the common term $\p_j \p_0 (\partial^a A_a)$. This gives 
    \begin{align}\label{YM_red2}
\Box \p_t A_j + N_j = \p_t J_j - \p_j J_0,
    \end{align}
where $N_j =  -\p_j \tilde N_0 + \p_0 \tilde N_j$.
We write $d_{A}^*d_{A}\Phi +\mathcal{V}'(|\Phi|^{2})\Phi = \mathcal F$
and differentiate in time to obtain
    \begin{align}\label{H_red2}
\Box \p_t \Phi + N = \p_t \mathcal F
    \end{align}
where 
    \begin{align*}
N = -\p_t ( 2 \rho_*(A_a) \p^a \Phi - \p^a \rho_*(A_a) \Phi - \rho_*(A_a) \rho_*(A^a) \Phi) + \p_t (\mathcal V'(|\Phi|^2) \Phi).
    \end{align*}
We call (\ref{YM_red2})--(\ref{H_red2}) the reduced Yang--Mills--Higgs equations.

Recall that $\mathcal V'$ is affine.
Thus $N_j$ and $N$ are third order polynomials in $(A, \Phi)$.
Suppose now that $(\tilde A, \tilde\Phi)$ also
satisfies (\ref{YM_red2})--(\ref{H_red2}), and write
    \begin{align*}
u = (A - \tilde A, \Phi - \tilde \Phi), 
\quad
v = \p_t u.
    \end{align*}
By inspecting $N_j$ and $N$, we see that 
    \begin{align*}
\Box v + X_1 v + X_2 u = 0
    \end{align*}
where $X_j$, $j=1,2$, are first order differential operators in the $x^1, x^2$ and $x^3$ variables, with coefficients that depend on $(A, \Phi)$ and $(\tilde A, \tilde\Phi)$, and whence also on the $x^0$ variable.
The above equation together with $\p_t u - v= 0$ gives a system of the form studied in \cite[Section B.1, Appendix B]{CLOP2}.
In particular, if $(A, \Phi)$ and $(\tilde A, \tilde\Phi)$ coincide near $\p^- \mathbb D$ then they coincide in the whole $\mathbb D$ by \cite[Lemma 14]{CLOP2}.

\begin{proof}[Proof of Proposition \ref{prop_tempg_uniq}]
We begin with claim (i).
We define
	\begin{align*}
(\tilde V^{(j)}, \tilde \Psi^{(j)}) = \mathscr T(V^{(j)}, \Psi^{(j)}).
	\end{align*}
As $(V^{(1)}, \Psi^{(1)}) \sim (V^{(2)}, \Psi^{(2)})$ near $\p^- \mathbb D$ also
$(\tilde V^{(1)}, \tilde \Psi^{(1)}) \sim (\tilde V^{(2)}, \tilde \Psi^{(2)})$ there. That is, there is $\tilde \U \in G^0(\mathbb D,p)$ such that
    \begin{align*}
(\tilde V^{(1)}, \tilde \Psi^{(1)}) = (\tilde V^{(2)}, \tilde \Psi^{(2)}) \cdot \tilde \U, \quad \text{near $\p^- \mathbb D$}.
    \end{align*}
As both $\tilde V^{(1)}$ and $\tilde V^{(2)}$ are in the temporal gauge, $\tilde \U$ does not depend on time and we may define $(\hat V^{(1)}, \hat \Psi^{(1)}) = (\tilde V^{(2)}, \tilde \Psi^{(2)}) \cdot \tilde \U$ in the whole $\mathbb D$. Now both $(\hat V^{(1)}, \hat \Psi^{(1)})$ and $(\tilde V^{(1)}, \tilde \Psi^{(1)})$ satisfy the Yang--Mills--Higgs equations in $\mathbb D$. They are also both in the temporal gauge and coincide near $\p^- \mathbb D$. 
Passing to the reduced equations (\ref{YM_red2})--(\ref{H_red2})
and using \cite[Lemma 14]{CLOP2}, we see that they coincide in $\mathbb D$.
Therefore $(\tilde V^{(1)}, \tilde \Psi^{(1)}) \sim (\tilde V^{(2)}, \tilde \Psi^{(2)})$ in $\mathbb D$ and hence also $(V^{(1)}, \Psi^{(1)}) \sim (V^{(2)}, \Psi^{(2)})$ there.

Let us now turn to claim (ii). 
As $V^{(1)}$ and $V^{(2)}$ are in the temporal gauge, $\U$
does not depend on time and is well-defined and in all $\mathbb D$. Moreover, $\U = \id$ in $\mho$.
We define $(\tilde V^{(1)}, \tilde \Psi^{(1)}) =  (V^{(2)}, \Psi^{(2)})\cdot \U$. Then $(\tilde V^{(1)}, \tilde \Psi^{(1)}) = (V^{(1)}, \Psi^{(1)})$ near $\p^- \mathbb D$, and we proceed to show that the same holds in the whole of $\mathbb D$.

As $(V^{(2)}, \Psi^{(2)})$ satisfies the Yang--Mills--Higgs equations with vanishing right-hand side in $\mathbb D \setminus \mho$, 
the same holds for $(\tilde V^{(1)}, \tilde \Psi^{(1)})$, due to the gauge equivalence.
As $\U = \id$ in $\mho$, we see that
 $(\tilde V^{(1)}, \tilde \Psi^{(1)})$ satisfies the Yang--Mills--Higgs equations with the same right-hand side in $\mho$ as
$(V^{(j)}, \Psi^{(j)})$, $j=1,2$.
Moreover, $\tilde V^{(1)}$ is in the temporal gauge since $\U$ does not depend on $t$.
Hence $(\tilde V^{(1)}, \tilde \Psi^{(1)})$ and $(V^{(1)}, \Psi^{(1)})$ are two solutions
to the reduced Yang--Mills--Higgs equations (\ref{YM_red2})--(\ref{H_red2}), with the same right-hand side. It follows again from \cite[Lemma 14]{CLOP2} that $(\tilde V^{(1)}, \tilde \Psi^{(1)}) = (V^{(1)}, \Psi^{(1)})$ in $\mathbb D$.
\end{proof} 
 
\subsection{Relative Lorenz gauge}\label{sec_rel_lor}

We begin by writing \eqref{eq:ymhs1}--\eqref{eq:ymhs2} 
in terms of the perturbation fields 
    \begin{align*}
W = V - A, \quad \Upsilon = \Psi - \Phi.
    \end{align*}

\begin{lemma}\label{lem_ymh_pert}
Suppose that $(A, \Phi)$ satisfies \eqref{eq:ymh1}--\eqref{eq:ymh2}.
Then $(V, \Psi)$ satisfies \eqref{eq:ymhs1}--\eqref{eq:ymhs2}
if and only if the perturbation field $(W, \Upsilon)$
satisfies
	\begin{align}
		D_{A}^*D_{A}W  +\star[W,\star F_{A}]+\mathcal{N}_{A}(W)+\sum_{i=1}^{3}\M_{A,\Phi}^{i}(W,\Upsilon)&=J,
	\label{eq:perturbed YMH1 prelim} \\	
		\Box_{A,\rho} \Upsilon+ \sum_{i=1}^{3} 	\mathcal{N}_{A, \Phi}^i(W, \Upsilon)&=\mathcal{F},
	\label{eq:perturbed YMH2 prelim}
	\end{align}
where 
\begin{align*} \mathcal N_{A}(W) &= \frac{1}{2} D_A^\ast [W, W] + \star [W, \star D_A W] + \frac{1}{2} \star [W, \star [W, W]], \\
	\M_{A,\Phi}^{1}(W,\Upsilon)	&=\J_{\rho}(d_{A}\Upsilon,\Phi)+\J_{\rho}(d_{A}\Phi,\Upsilon)+\J_{\rho}(\rho_{*}(W)\Phi,\Phi),\\
	\M_{A,\Phi}^{2}(W,\Upsilon) 	&=\J_{\rho}(d_{A}\Upsilon,\Upsilon)+\J_{\rho}(\rho_{*}(W)\Upsilon,\Phi)+\J_{\rho}(\rho_{*}(W)\Phi,\Upsilon),\\
	\M_{A,\Phi}^{3}(W,\Upsilon) 	&=\J_{\rho}(\rho_{*}(W)\Upsilon,\Upsilon),\\
	\mathcal{N}_{A, \Phi}^1(W, \Upsilon) &= 
d_{A}^*(\rho_{*}(W)\Phi)+\star(\rho_{*}(W)\wedge\star d_{A}\Phi)
+ 2 \Re \langle \Phi, \Upsilon \rangle_{\mathcal W} \Phi + \mathcal{V}'(|\Phi|^2) \Upsilon,
\\
	\mathcal{N}_{A, \Phi}^2(W, \Upsilon) &= \star(\rho_{*}(W)\wedge\star \rho_{*}(W)\Phi) 
+ 
d_{A}^*(\rho_{*}(W)\Upsilon)+\star(\rho_{*}(W)\wedge\star d_{A}\Upsilon)
\\&\qquad + 2 \Re \langle \Phi, \Upsilon \rangle_{\mathcal W} \Upsilon + |\Upsilon|^2 \Phi,
\\
	\mathcal{N}_{A, \Phi}^3(W, \Upsilon) &= \star(\rho_{*}(W)\wedge\star \rho_{*}(W)\Upsilon) 
+ |\Upsilon|^2 \Upsilon.
\end{align*}
\end{lemma}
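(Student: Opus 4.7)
The plan is to carry out a direct substitution of $V = A + W$ and $\Psi = \Phi + \Upsilon$ into the perturbed system \eqref{eq:ymhs1}--\eqref{eq:ymhs2}, expand everything in powers of the perturbation $(W,\Upsilon)$, and use the homogeneous equations \eqref{eq:ymh1}--\eqref{eq:ymh2} for $(A,\Phi)$ to cancel the background contributions. The first ingredient is the collection of algebraic identities governing how the curvature and covariant derivatives shift under $V = A+W$: directly from the definitions one finds
\[
F_V = F_A + D_A W + \tfrac12 [W, W], \qquad D_V = D_A + [W,\,\cdot\,],
\]
and hence, using $D_V^{*} = \star D_V \star$, one obtains $D_V^{*} = D_A^{*} + \star [W, \star\,\cdot\,]$. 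Analogously $d_V = d_A + \rho_*(W)\wedge\,\cdot\,$ and $d_V^{*} = d_A^{*} + \star(\rho_*(W)\wedge \star\,\cdot\,)$.

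Next I would expand $D_V^{*} F_V$ and group the resulting terms by their order in $W$. The $W$-linear piece is $D_A^{*} D_A W + \star[W,\star F_A]$, while the quadratic and cubic remainders, namely $\tfrac12 D_A^{*}[W,W]$, $\star[W,\star D_A W]$ and $\tfrac12\star[W,\star[W,W]]$, assemble precisely into $\mathcal N_A(W)$ as defined in the statement. In parallel I would expand $\J_\rho(d_V\Psi,\Psi)$ by bilinearity, writing $d_V\Psi = d_A\Phi + d_A\Upsilon + \rho_*(W)\wedge\Phi + \rho_*(W)\wedge\Upsilon$ and $\Psi = \Phi+\Upsilon$; collecting by total degree in $(W,\Upsilon)$ reproduces exactly $\J_\rho(d_A\Phi,\Phi) + \sum_{i=1}^{3}\M_{A,\Phi}^{i}(W,\Upsilon)$. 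Adding the two expansions and using \eqref{eq:ymh1} to cancel the background combination $D_A^{*}F_A + \J_\rho(d_A\Phi,\Phi)$ yields \eqref{eq:perturbed YMH1 prelim}.

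I would treat the Higgs equation by the same template. Expanding $d_V^{*}d_V\Psi$ via the formulas for $d_V$ and $d_V^{*}$, and writing $\mathcal V'(|\Psi|^{2})\Psi = (|\Phi|^{2} + 2\Re\langle\Phi,\Upsilon\rangle_{\mathcal W} + |\Upsilon|^{2} - 1)(\Phi+\Upsilon)$ via $\mathcal V'(s) = s - 1$, the degree-zero piece $d_A^{*}d_A\Phi + \mathcal V'(|\Phi|^{2})\Phi$ vanishes by \eqref{eq:ymh2}. The remaining degree-one terms recombine into $\Box_{A,\rho}\Upsilon + \mathcal N_{A,\Phi}^{1}(W,\Upsilon)$, the degree-two terms assemble into $\mathcal N_{A,\Phi}^{2}$, and the only degree-three contributions are $\star(\rho_*(W)\wedge\star\rho_*(W)\Upsilon)$ and $|\Upsilon|^{2}\Upsilon$, which together form $\mathcal N_{A,\Phi}^{3}$.

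No substantive obstacle is expected; the argument is essentially a bookkeeping exercise and the converse direction is immediate from reversing the substitution. The main points deserving care are the correct treatment of the Hodge star when forming the adjoints of the shifted covariant derivatives, and tracking combinatorial factors so that, for instance, the $\tfrac12[W,W]$ in $F_V$ together with the action of $\star[W,\star\,\cdot\,]$ on the linear piece $D_A W$ produce the quadratic terms of $\mathcal N_A(W)$ with exactly the coefficients recorded, and similarly for the interplay between $\rho_*(W)\wedge\Phi$ and $\rho_*(W)\wedge\Upsilon$ inside the two $\J_\rho$ slots.
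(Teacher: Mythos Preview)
Your proposal is correct and follows essentially the same approach as the paper: the paper also substitutes $V=A+W$, $\Psi=\Phi+\Upsilon$, expands $D_V^*F_V$ (citing \cite[(20)--(21)]{CLOP2} rather than writing it out), expands $\J_\rho(d_V\Psi,\Psi)$ by bilinearity, expands $d_V^*d_V\Psi$ via the shifted derivative formulas you wrote, and expands $\mathcal V'(|\Psi|^2)\Psi$ as a cubic in $\Upsilon$, then cancels the background terms using \eqref{eq:ymh1}--\eqref{eq:ymh2}. Your version is simply a bit more explicit about the curvature expansion than the paper's.
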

\begin{proof}
Let us first consider the Yang--Mills channel \eqref{eq:ymhs1}. On the one hand, we know, see \cite[(20)--(21)]{CLOP2}, that
 \[D_{V}^*F_{V}=D_{A}^*F_{A}+D_{A}^*D_{A}W+\star [W,\star F_{A}]+\mathcal N_{A}(W).\]
 On the other hand, it follows from the bilinearity of $\J_{\rho}$ that
 $$\J_{\rho}(d_{V}\Psi,\Psi)   =\J_{\rho}(d_{A}\Phi,\Phi)+\M_{A,\Phi}^{1}(W,\Upsilon)+\M_{A,\Phi}^{2}(W,\Upsilon)+\M_{A,\Phi}^{3}(W,\Upsilon),$$
 Combining these calculations allows us to transform \eqref{eq:ymhs1} to \eqref{eq:perturbed YMH1 prelim}.

Next we turn to the Higgs channel \eqref{eq:ymhs2}. We recall that \begin{align*}d_{V}\Psi&=d_{A}\Psi+\rho_{*}(W) \wedge \Psi, \\ d_{V}^*(\cdot)&=d_{A}^*(\cdot)+\star (\rho_{*}(W)\wedge\star(\cdot) ).\end{align*}
Hence \[d_{V}^*d_{V}\Psi=
d_{A}^*d_{A}\Psi
+d_{A}^*(\rho_{*}(W)\Psi)+\star(\rho_{*}(W)\wedge\star d_{A}\Psi)
+\star(\rho_{*}(W)\wedge\star \rho_{*}(W)\Psi).\]
Substitution $\Psi = \Upsilon + \Phi$
gives the terms containing $W$ in $\mathcal N_{A,\Phi}^i$, $i=1,2,3$.
To finish the proof, we recall that $\mathcal{V}(s) = s^{2}/2-s$. 
Thus 
    \begin{align*}
\mathcal V'(|\Psi|^2) \Psi 
- 
\mathcal V'(|\Phi|^2) \Phi 
= -\Upsilon + |\Psi|^2 \Psi - |\Phi|^2 \Phi.
    \end{align*}
Plugging in $\Psi = \Upsilon + \Phi$ and expanding this cubic polynomial gives the rest of the terms in $\mathcal N_{A,\Phi}^i$, $i=1,2,3$. 
\end{proof}
  
Recall that the relative Lorenz gauge condition reads $D_A^* W = 0$.
In the relative Lorenz gauge $D_A^* D_A W = \Box_{A,\Ad} W$,
and (\ref{eq:perturbed YMH1 prelim})--(\ref{eq:perturbed YMH2 prelim})
is a system of semilinear wave equations.
Together with suitable initial conditions, it has a unique solution when the source $(J, \mathcal F)$ is small and smooth enough.
However, if $(W, \Upsilon)$ is its solution then $(V,\Psi) = (W, \Upsilon) + (A, \Phi)$ solves the perturbed Yang--Mills--Higgs equations \eqref{eq:ymhs1}--\eqref{eq:ymhs2} if and only if $D_A D_A^\ast W=0$. This again follows from the compatibility condition in Lemma \ref{lem_comp_cond}.

\begin{lemma}\label{lem_undo_gauge}
Suppose that $(A, \Phi)$ satisfies \eqref{eq:ymh1}--\eqref{eq:ymh2}
and that $(W,\Upsilon)$ satisfies 
	\begin{align}
		\Box_{A,\Ad} W+\star[W,\star F_{A}]+\mathcal{N}_{A}(W)+\sum_{i=1}^{3}\M_{A,\Phi}^{i}(W,\Upsilon)&=J,
	\label{eq:perturbed YMH1} \\	
		\Box_{A,\rho} \Upsilon+ \sum_{i=1}^{3} 	\mathcal{N}_{A, \Phi}^i(W, \Upsilon)&=\mathcal{F},
	\label{eq:perturbed YMH2}
	\end{align}
together with vanishing initial conditions.
Suppose, furthermore, that 
    \begin{align}\label{comp_equiv}
D^*_{A} J + \star [W, \star J] = \J_\rho(\mathcal F, \Upsilon) + \J_\rho(\mathcal F, \Phi).            
    \end{align}
Then $W$ satisfies the relative Lorenz gauge condition 
$D_A^* W = 0$, and 
$V = A + W$ and $\Psi = \Phi + \Upsilon$ satisfy \eqref{eq:ymhs1}--\eqref{eq:ymhs2}.
\end{lemma}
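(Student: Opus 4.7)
The strategy is to establish that $D_A^* W \equiv 0$ in $\mathbb D$; once this is shown, the relation $\Box_{A,\Ad} W = D_A^* D_A W$ noted in the text immediately before the lemma converts \eqref{eq:perturbed YMH1} into \eqref{eq:perturbed YMH1 prelim}, and Lemma \ref{lem_ymh_pert} then gives that $(V, \Psi) = (A + W, \Phi + \Upsilon)$ satisfies \eqref{eq:ymhs1}--\eqref{eq:ymhs2}.

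To reach $D_A^* W \equiv 0$, the plan is to derive a homogeneous linear wave equation for $D_A^* W$ with vanishing Cauchy data. First I would apply $D_A^*$ to both sides of \eqref{eq:perturbed YMH1}. On the principal term, using the Weitzenböck-type relation between $\Box_{A,\Ad}$ and $D_A^* D_A + D_A D_A^*$ on $\g$-valued 1-forms, combined with $D_A^2 = [F_A, \cdot\,]$ and the second Bianchi identity $D_A F_A = 0$, one obtains $\Box_{A,\Ad}(D_A^* W)$ modulo terms linear in $D_A^* W$ and in $W$ of lower order. On the right-hand side, I would substitute the compatibility assumption \eqref{comp_equiv}, which reads
\[
D_A^* J \;=\; \J_\rho(\mathcal F, \Upsilon) + \J_\rho(\mathcal F, \Phi) - \star [W, \star J],
\]
and then eliminate $\mathcal F$ via \eqref{eq:perturbed YMH2} and $J$ via \eqref{eq:perturbed YMH1}.

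The heart of the calculation is the cancellation of all source contributions on the right-hand side. The $\J_\rho(\mathcal F, \Psi)$ terms produced by the compatibility identity should match, term by term, the contributions of $D_A^*\bigl(\sum_{i} \M_{A,\Phi}^i(W, \Upsilon)\bigr)$, together with those coming from $D_A^*\mathcal N_A(W)$ and $D_A^*\bigl(\star [W, \star F_A]\bigr)$; this is essentially an infinitesimal gauge-invariance identity carried out at the level of the perturbation, paralleling the derivation of Lemma \ref{lem_comp_cond} applied to $(V,\Psi)$. After these cancellations what remains is a homogeneous linear wave-type equation for $D_A^* W$ with smooth coefficients built from $(A, \Phi, W, \Upsilon)$. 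Since $(W, \Upsilon)$ has vanishing Cauchy data on $\p^- \mathbb D$, so does $D_A^* W$, and the uniqueness statement \cite[Lemma 14]{CLOP2} quoted earlier in the paper then forces $D_A^* W \equiv 0$ on $\mathbb D$.

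The main obstacle is the algebraic bookkeeping in this cancellation step: one has to push $D_A^*$ through each of the three pieces $\M_{A,\Phi}^i(W, \Upsilon)$ and through the three $\mathcal N_{A,\Phi}^i(W, \Upsilon)$ entering via $\mathcal F$, using the anti-symmetry of $\J_\rho$ (Remark \ref{rem_antisymmetry}), the equivariance of Lemma \ref{lemma:equivariance}, and the skew-Hermiticity of $\rho_*(X)$. This is the only point at which the detailed structure of the Yang--Mills--Higgs coupling enters the argument; once it is verified, the closing step is a standard Gronwall-type uniqueness for linear hyperbolic systems, as already packaged in \cite[Lemma 14]{CLOP2}.
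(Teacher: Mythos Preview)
Your overall strategy---derive a homogeneous linear wave equation for $D_A^* W$ with vanishing Cauchy data and invoke uniqueness---is the same as the paper's, but the paper reaches that wave equation by a much shorter route that avoids the term-by-term cancellation you describe.

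Instead of applying $D_A^*$ to \eqref{eq:perturbed YMH1} and unpacking $\mathcal N_A$, $\M_{A,\Phi}^i$, $\mathcal N_{A,\Phi}^i$, the paper first uses Lemma~\ref{lem_ymh_pert} to repackage \eqref{eq:perturbed YMH1} as
\[
D_A D_A^* W + D_V^* F_V + \J_\rho(d_V \Psi, \Psi) = J,
\]
the left-hand sides of \eqref{eq:perturbed YMH1} and \eqref{eq:perturbed YMH1 prelim} differing by exactly $D_A D_A^* W$. It then applies $D_V^*$ rather than $D_A^*$. The Yang--Mills term disappears by the identity $D_V^* D_V^* F_V = 0$, and a short duality calculation using the definition of $\J_\rho$, the skew-Hermiticity of $\rho_*$, and \eqref{eq:ymhs2} gives $D_V^* \J_\rho(d_V\Psi,\Psi) = \J_\rho(\mathcal F,\Psi)$. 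Since \eqref{comp_equiv} is exactly $D_V^* J = \J_\rho(\mathcal F,\Psi)$, one is left with $D_V^* D_A D_A^* W = 0$, the desired wave equation for $D_A^* W$.

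Your route is in fact equivalent: applying $D_A^*$ and then substituting $-\star[W,\star J]$ with $J$ replaced via \eqref{eq:perturbed YMH1} amounts precisely to applying $D_V^* = D_A^* + \star[W,\star\,\cdot\,]$ to the equation. But you do not recognize this, and instead propose to chase the cancellation through the explicit formulas for all six nonlinear terms; you acknowledge this as ``the main obstacle'' and do not carry it out. The paper's repackaging into $(V,\Psi)$ makes the analogy with Lemma~\ref{lem_comp_cond} exact and reduces the whole computation to three clean identities, so the bookkeeping you anticipate never materializes.
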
 
\begin{proof}
Let $Q \in C_0^\infty(M, \mathfrak g)$. 
The definitions of $D_V^\ast$ and $\J_\rho$ yield that
\begin{align*}
	&(D^*_{V}\J_{\rho}(d_{V}\Psi,\Psi),Q)_{L^2, \Ad}=(\J_{\rho}(d_{V}\Psi,\Psi),D_{V}Q)_{L^2, \Ad} =\Re (d_{V}\Psi,\rho_{*}(D_{V}Q)\Psi)_{L^2, E}  
\\&\qquad
=
\Re(d_{V}\Psi,d_{V}(\rho_{*}(Q)\Psi))_{L^2, E}-\Re(d_{V}\Psi,\rho_{*}(Q)(d_{V}\Psi))_{L^2, E}
\\&\qquad
=\Re(d_{V}^*d_{V}\Psi,\rho_{*}(Q)\Psi)_{L^2, E}-\Re(d_{V}\Psi,\rho_{*}(Q)(d_{V}\Psi))_{L^2, E}.
\end{align*}
By Lemma \ref{lem_ymh_pert}, equations (\ref{eq:perturbed YMH2}) 
and (\ref{eq:ymhs2}) are equivalent.
Invoking \eqref{eq:ymhs2} and the $G$-invariance of the metric on $W$, cf. Remark \ref{rem_antisymmetry}, in turn leads to
\begin{align*}
	(D^*_{V}\J_{\rho}(d_{V}\Psi,\Psi),Q)_{L^2, \Ad}
	&=\Re(\mathcal{F} - \mathcal{V}'(|\Psi|^{2})\Psi,\rho_{*}(Q)\Psi)_{L^2, E}-\Re(d_{V}\Psi,\rho_{*}(Q)(d_{V}\Psi))_{L^2, E}\\
	&=\Re(\mathcal{F},\rho_{*}(Q)\Psi)_{L^2, E}
=(\J_{\rho}(\mathcal{F},\Psi),Q)_{L^{2},\text{Ad}}.
\end{align*} 
Observe that (\ref{comp_equiv}) is equivalent with (\ref{eq:compatability for ymhs}).
Hence 
    \begin{align}\label{psi_compatibility}
D_V^* \J_\rho(d_V \Psi, \Psi) = \J_\rho(\mathcal F, \Psi) = D_{V}^*J.
    \end{align}

By Lemma \ref{lem_ymh_pert}, 
equations (\ref{eq:perturbed YMH1 prelim}) 
and (\ref{eq:ymhs1}) are equivalent. 
On the other hand, the left-hand side of (\ref{eq:perturbed YMH1 prelim}) and the left-hand side of (\ref{eq:perturbed YMH1}) differ by $D_A D_A^\ast W$.
Therefore, 
    \begin{align*}
D_A D_A^\ast W + D_{V}^*F_{V}+\J_{\rho}(d_{V}\Psi,\Psi)=J.
    \end{align*}
Using (\ref{psi_compatibility}) and $D_V^\ast D_V^\ast F_V = 0$, see e.g. \cite[Lemma 2]{CLOP2} for the latter fact,
we see that 
    \begin{align*}
D_V^* D_A D_A^\ast W = 0.
    \end{align*}
But this can be viewed as a wave equation for $D_A^\ast W$. As $W$ vanishes initially, it follows that $D_A^\ast W$ vanishes identically. Thus (\ref{eq:perturbed YMH1}) reduces to (\ref{eq:perturbed YMH1 prelim}), and $(V,\Psi)$ satisfies \eqref{eq:ymhs1}--\eqref{eq:ymhs2} by Lemma \ref{lem_ymh_pert}.
\end{proof} 

\begin{remark}
If $D_A^* W = 0$ then the expressions $\mathcal{N}_{A, \Phi}^j$, $j=1,2$,
simplify as follows
    \begin{align*}
	\mathcal{N}_{A, \Phi}^1(W, \Upsilon) &= 
2\star(\rho_{*}(W)\wedge\star d_{A}\Phi)
+ 2 \Re \langle \Phi, \Upsilon \rangle_{\mathcal W} \Phi + (|\Phi|^2 -1)\Upsilon,
\\
	\mathcal{N}_{A, \Phi}^2(W, \Upsilon) &= \star(\rho_{*}(W)\wedge\star \rho_{*}(W)\Phi) 
+ 
2\star(\rho_{*}(W)\wedge\star d_{A}\Upsilon)
\\&\qquad + 2 \Re \langle \Phi, \Upsilon \rangle_{\mathcal W} \Upsilon + |\Upsilon|^2 \Phi.
    \end{align*}
Likewise $\mathcal N_A$ simplifies slightly, cf. (29) in \cite{CLOP2}.
Lemma \ref{lem_undo_gauge} holds if $\mathcal{N}_{A, \Phi}^j$, $j=1,2$,
and $\mathcal N_A$ are replaced by the simplifications. 
\end{remark}

We view \eqref{comp_equiv} as an equation for $J_0$,
and consider only the spatial part 
	\begin{align}\label{def_spatial_part}
J - J_0 dx^0
	\end{align}
of $J$ as a source in \eqref{eq:perturbed YMH1}.
Now we are ready to prove that the perturbed Yang--Mills--Higgs system in the relative Lorenz gauge \eqref{eq:perturbed YMH1}--\eqref{eq:perturbed YMH2}, coupled with the compatibility condition \eqref{comp_equiv} and vanishing initial conditions, has a unique solution for a small source.

\begin{proposition}\label{prop:forward problem relative}
Write $M_1 = (-1,1) \times \R^3$ and let $k \geq 4$.
	Suppose the Yang--Mills--Higgs field $(A, \Phi) \in \Omega^1(M_1, \g) \times C^\infty(M_1, \mathcal{W})$ is bounded and has bounded derivatives. Then there is a neighbourhood $\mathcal{H}$ of the zero function in 
	\[H^{k + 2}(M_1, \g \oplus \g \oplus \g \oplus \W)\]
such that for all $(J_1, J_2, J_3, \mathcal{F}) \in \mathcal{H}$ the system \eqref{eq:perturbed YMH1}--\eqref{comp_equiv} has a unique solution $$(W, \Upsilon, J_0) \in H^{k + 1}(M_1, T^\ast M_1 \otimes \g \oplus \mathcal{W} \oplus \g)$$ 
vanishing for $t<-1$, and the map from $(J_1, J_2, J_3, \mathcal{F})$ to $(W, \Upsilon, J_0)$ is smooth.

Furthermore, the solutions have the following finite speed of propagation property. 
For $x_0 \in M$ and $\epsilon > 0$, define
	\begin{align*}
\mathbb{D}_{(x_0, \epsilon)} = \{\epsilon y + x_0 : y \in \mathbb{D}\}, \quad
\p^- \mathbb{D}_{(x_0, \epsilon)} = \{\epsilon y + x_0 : y \in \p^- \mathbb{D}\}.
	\end{align*}
If $(W, \Upsilon, J_0)$ solves \eqref{eq:perturbed YMH1}--\eqref{comp_equiv} with $(J_1, J_2, J_3, \mathcal F) = 0$
and if it vanishes near $\p^- \mathbb{D}_{(x_0, \epsilon)}$
then it vanishes on the whole of $\mathbb{D}_{(x_0, \epsilon)}$.
\end{proposition}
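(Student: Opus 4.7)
The plan is first to solve the compatibility condition \eqref{comp_equiv} for $J_0$ as a linear first-order ordinary differential equation in $t$, and then to treat \eqref{eq:perturbed YMH1}--\eqref{eq:perturbed YMH2} as a closed semilinear hyperbolic system for $(W,\Upsilon)$ with the so-determined $J_0$ inserted. In signature $(-,+,+,+)$ we have $D_A^\ast J = -\partial^\alpha J_\alpha - [A^\alpha, J_\alpha] = \partial_t J_0 - \partial^a J_a - [A^\alpha, J_\alpha]$, so \eqref{comp_equiv} takes the form
\[\partial_t J_0 = \partial^a J_a + [A^\alpha,J_\alpha] - \star[W,\star J] + \J_\rho(\mathcal F,\Upsilon) + \J_\rho(\mathcal F,\Phi),\]
which is linear in $J_0$ once $(W,\Upsilon,J_1,J_2,J_3,\mathcal F)$ is fixed. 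Since all sources are supported in $\mho\subset\mathbb D$ and we look for solutions vanishing for $t<-1$, I would impose $J_0|_{t=-1}=0$ and recover $J_0$ by variation of constants. For $k\geq 4$ the Sobolev multiplication $H^{k+1}\cdot H^{k+1}\hookrightarrow H^{k+1}$ on $M_1$ holds, and the resulting map $(W,\Upsilon,J_1,J_2,J_3,\mathcal F)\mapsto J_0$ is smooth from $H^{k+1}\times H^{k+2}$ into $H^{k+1}$; the one-derivative loss from $J_a$ accounts for the regularity gap between hypothesis and conclusion.

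Next I would substitute this $J_0$ back in and read \eqref{eq:perturbed YMH1}--\eqref{eq:perturbed YMH2} as a closed system with diagonal principal part $(\Box_{A,\Ad}W,\Box_{A,\rho}\Upsilon)$, a zeroth-order coupling through $\star[W,\star F_A]$ and through the linear parts of $\M_{A,\Phi}^1$ and $\mathcal N_{A,\Phi}^1$, and nonlinear terms polynomial of degree two and three in $(W,\Upsilon)$. Standard energy estimates for semilinear wave systems of this form, following the approach of \cite{CLOP2}, together with a Picard iteration on a small ball in $H^{k+1}(M_1)$, produce, for sufficiently small smooth sources, a unique solution vanishing for $t<-1$, depending smoothly on the data by either the implicit function theorem or the Picard scheme itself.

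For the finite speed of propagation claim, the ODE for $J_0$ propagates purely in the $t$-direction, so if $(W,\Upsilon)$ and $(J_1,J_2,J_3,\mathcal F)$ vanish near $\partial^-\mathbb D_{(x_0,\epsilon)}$ then so does $J_0$. For the wave system, the principal symbol equals the Minkowski wave operator $\xi^\alpha\xi_\alpha$, and a classical energy estimate on truncated backward light cones together with a continuity argument forces $(W,\Upsilon)$ to vanish throughout $\mathbb D_{(x_0,\epsilon)}$, and $J_0$ along with it. The main technical point to watch is the bookkeeping of the coupling through $J_0$, ensuring that the nonlinear terms stay in $H^{k+1}$ throughout the iteration and that the single derivative loss is confined to the ODE step; apart from this, the argument runs parallel to the pure Yang--Mills case of \cite{CLOP2}, to which it reduces when $\Phi$ and $\Upsilon$ are set to zero.
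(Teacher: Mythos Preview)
Your proposal is correct and follows essentially the same approach as the paper: view the compatibility condition \eqref{comp_equiv} as a first-order ODE in $t$ for $J_0$ (linear in $J_0$, with coefficients depending on $W$), couple it to the semilinear wave system \eqref{eq:perturbed YMH1}--\eqref{eq:perturbed YMH2}, and close by an implicit function theorem argument together with the energy estimates of \cite[Lemma 14]{CLOP2}, which also yields the finite speed of propagation. The only organizational difference is that the paper keeps $(W,\Upsilon,J_0)$ as a coupled triple---splitting into a linear operator $\mathcal P$ (wave plus decoupled transport) and a nonlinear remainder $\mathcal K$, then applying the implicit function theorem to $(W,\Upsilon,J_0)\mapsto (W,\Upsilon,J_0)-\mathcal S\mathcal K$---whereas you eliminate $J_0$ first and work with the resulting (mildly nonlocal in $t$) closed system for $(W,\Upsilon)$; both routes are equivalent and rely on the same ingredients.
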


\begin{proof}
This is proved in the same spirit as \cite[Proposition 3]{CLOP2}. We sketch the major steps.
Note that (\ref{comp_equiv}) can be viewed as the following ordinary differential equation for $J_0$,
    \begin{align}\label{eq_J0}
\p_t J_0 + [A_0, J_0] + [W_0, J_0] 
&= 
\p^j J_j + [A^j, J_j] + [W^j, J_j]
-\J_\rho(\mathcal F, \Upsilon) - \J_\rho(\mathcal F, \Phi),
    \end{align}
where $j=1,2,3$.
Similarly to Proposition \ref{prop_tempg_uniq}, we can show that the system (\ref{eq:perturbed YMH1})--(\ref{comp_equiv}) has at most one solution. Suppose that both $(W^{(j)}, \Upsilon^{(j)}, J_0^{(j)})$, $j=1,2$,
solve the system with the same source $(J_1, J_2, J_3, \mathcal F)$.
An inspection of the lower order terms shows that $(v,u)$, with
    \begin{align*}
v = (W^{(1)} - W^{(2)}, \Upsilon^{(1)} - \Upsilon^{(2)}),
\quad
u = J_0^{(1)} - J_0^{(2)},
    \end{align*}
satisfies a system of the form
    \begin{align*}
\Box v + X_1 v + X_2 u &= 0,
\\
\p_t u + Y_1 v + Y_2 u &= 0,
    \end{align*}
where $X_j$ and $Y_j$ are differential operators of order one and zero, respectively. The coefficients of $X_j$ and $Y_j$ depend on $(W^{(j)}, \Upsilon^{(j)}, J_0^{(j)})$, $j=1,2$, and also on the source $(J_1, J_2, J_3, \mathcal F)$. It is also important for us that $X_2 = -1$ is of zeroth order with respect to the time variable.   \cite[Lemma 14]{CLOP2} implies now that 
if $(W^{(j)}, \Upsilon^{(j)}, J_0^{(j)})$, $j=1,2$, vanish initially  then they coincide everywhere. 
In fact, \cite[Lemma 14]{CLOP2} gives right away also the stronger finite speed of propagation result concerning $\mathbb{D}_{(x_0, \epsilon)}$.

Let us now turn to existence of a solution.
We define two operators $\mathcal{P}$ and $\mathcal{K}$ to separate the linear terms from the nonlinear terms and source terms in \eqref{eq:perturbed YMH1}--\eqref{eq:perturbed YMH2},
 	\[\mathcal{P}(W, \Upsilon, J_0) = \left(\begin{array}{l}
	   \Box_{A, \Ad}W+\star[W,\star F_{A}] + \M_{A,\Phi}^{1}(W,\Upsilon) - J_0dx^0  \\	
	 	\Box_{A, \rho}\Upsilon + \mathcal{N}_{A,\Phi}^{1}(W,\Upsilon) \\
 	   \partial_0 J_0 + [A_0, J_0] 
\end{array}\right),\]
  	\[\mathcal{K}(W, \Upsilon, J, \mathcal{F}) =\left(\begin{array}{l}
  J_jdx^j-\mathcal{N}_{A}(W)-\sum_{i=2}^{3}\M_{A,\Phi}^{i}(W,\Upsilon)  \\	
  \mathcal{F}-\sum_{i=2}^{3}\mathcal{N}_{A,\Phi}^{i}(W,\Upsilon)\\
\p^j J_j + [A^j, J_j] + [W^j, J_j]
-\J_\rho(\mathcal F, \Upsilon + \Phi) 
- [W_0, J_0]
 \end{array}\right).\]

Writing $u = (W,\Upsilon, J_0)$, we consider the linear system  $\mathcal Pu = f$ with vanishing initial conditions at $t=-1$. 
As the equation for $J_0$ is decoupled from the rest of the system, it is easy to solve. The system induces a continuous map 
$\mathcal S f = u$, 
 \begin{align*} 
\mathcal S &: H^{k}(M_1; T^\ast M_1 \otimes \g) \times H^{k}(M_1;  \mathcal{W})  \times H^{k+1}(M_1; \g)  \\&\qquad \rightarrow  H^{k + 1}(M_1; T^\ast M \otimes \g\oplus\mathcal{W}\oplus\g).  \end{align*}

Now one can argue that the map $$\mathcal{T}(W, \Upsilon, J, \mathcal{F}) = (W, \Upsilon, J_0) - \mathcal{S} \mathcal{K}(W, \Upsilon, J, \mathcal{F})$$ has the partial derivative in $(W, \Upsilon, J_0)$ at $0$ equal to $\id$. Then the claim follows from the implicit function theorem. 
 \end{proof}

 \subsection{The source-to-solution map}
 
In this section we show how Remark \ref{rem_ss_map} can be made precise. 
 
   \begin{lemma}\label{lem_pass_temp_gauge}
Let $(A, \Phi)$ be a Yang--Mills--Higgs field in $\mathbb{D}$.
Then there is $(\tilde A', \tilde \Phi')$ in $\mathcal D_{(A,\Phi)}$
such that $\tilde A'$ is in the temporal gauge in $\mho$ 
and that \eqref{eq:ymh1}--\eqref{eq:ymh2} hold in $\mho$.
Moreover, for any such $(\tilde A', \tilde \Phi')$ there is $(\tilde{A}, \tilde{\Phi})$, a Yang--Mills--Higgs field in the temporal gauge in $\mathbb{D}$, 
satisfying $(\tilde{A}, \tilde{\Phi}) \sim (A, \Phi)$ 
and $(\tilde{A}, \tilde{\Phi})|_\mho = (\tilde A', \tilde \Phi')$.
 	\end{lemma}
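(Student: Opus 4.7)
The plan is to apply the temporal-gauge construction $\mathscr T$ from \eqref{temporal_U} in both assertions, using Proposition \ref{prop_tempg_uniq}(i) to upgrade a local gauge equivalence to a global one where needed. For the first assertion I would simply set $(\tilde A, \tilde \Phi) := \mathscr T(A, \Phi)$ on all of $\mathbb D$. The defining ODE \eqref{temporal_U} produces a gauge element $\mathbf U$ with $\mathbf U|_{\p^- \mathbb D} = \id$, and since $p \in \p^- \mathbb D$ this gives $\mathbf U \in G^0(\mathbb D, p)$, so $(\tilde A, \tilde \Phi) \sim (A, \Phi)$ throughout $\mathbb D$. The restriction $(\tilde A', \tilde \Phi') := (\tilde A, \tilde \Phi)|_\mho$ therefore lies in $\mathcal D_{(A, \Phi)}$, is temporal on $\mho$ by construction, and satisfies the Yang--Mills--Higgs equations there by the gauge invariance of \eqref{eq:ymh1}--\eqref{eq:ymh2}.

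For the second assertion, let $(\tilde A', \tilde \Phi')$ be any element of $\mathcal D_{(A, \Phi)}$ that is temporal on $\mho$ and satisfies \eqref{eq:ymh1}--\eqref{eq:ymh2} there. By the definition of $\mathcal D_{(A, \Phi)}$, it extends to some $(V, \Psi) \in C^3(\mathbb D)$ solving the Yang--Mills--Higgs equations on $\mathbb D \setminus \mho$ and gauge equivalent to $(A, \Phi)$ near $\p^- \mathbb D$. Together with the hypothesis that the equations also hold on $\mho$, this yields a Yang--Mills--Higgs field $(V, \Psi)$ on all of $\mathbb D$, and then Proposition \ref{prop_tempg_uniq}(i) promotes the local gauge equivalence with $(A, \Phi)$ to a global one. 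I would next put $(V, \Psi)$ into the temporal gauge by setting $(\tilde A, \tilde \Phi) := \mathscr T(V, \Psi)$; as in the first part, the intermediate gauge element lies in $G^0(\mathbb D, p)$, so $(\tilde A, \tilde \Phi) \sim (V, \Psi) \sim (A, \Phi)$ on $\mathbb D$ and $\tilde A$ is temporal throughout $\mathbb D$.

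The one point requiring care, and the main (mild) obstacle in the argument, is the matching $(\tilde A, \tilde \Phi)|_\mho = (\tilde A', \tilde \Phi')$, which amounts to showing that the gauge element $\mathbf U_V$ produced by $\mathscr T(V, \Psi)$ equals $\id$ on $\mho$. The geometric ingredient is that for any point of $\mho$ the integral curve of $\p_t$ through it has constant spatial coordinate $x$ with $|x|<\varepsilon_0$, so its intersection with $\mathbb D$ is entirely contained in $\overline{\mho}$ and starts on $\p^- \mathbb D$. Along this segment we have $V_0 = \tilde A'_0 = 0$ by the temporal gauge assumption on $\mho$, and integrating $\p_t \mathbf U_V = -V_0 \mathbf U_V$ from the initial condition $\mathbf U_V|_{\p^- \mathbb D} = \id$ forces $\mathbf U_V \equiv \id$ on $\mho$. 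Consequently $(\tilde A, \tilde \Phi)|_\mho = (V, \Psi)|_\mho = (\tilde A', \tilde \Phi')$, completing the construction.
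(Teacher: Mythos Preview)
Your proof is correct and follows essentially the same approach as the paper: take $\mathscr T(A,\Phi)|_\mho$ for the first assertion, and for the second, extend the given data to a Yang--Mills--Higgs field on $\mathbb D$, apply $\mathscr T$, and use Proposition~\ref{prop_tempg_uniq}(i) to pass from gauge equivalence near $\p^-\mathbb D$ to global gauge equivalence. The only cosmetic difference is that you invoke Proposition~\ref{prop_tempg_uniq}(i) before applying $\mathscr T$, whereas the paper does so afterward; either order works. Your explicit argument that the temporal-gauge element is the identity on $\mho$ (via the $\p_t$-integral curve lying in $\overline\mho$) spells out a step the paper leaves implicit.
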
 

Before giving the short proof, let us explain how the lemma is used. As $\mathcal D_{(A,\Phi)}$ can determine 
$(A, \Phi)$ only up to a gauge and as $\mathcal D_{(A,\Phi)} = \mathcal D_{(\tilde A,\tilde \Phi)}$, by replacing $(A,\Phi)$ with $(\tilde A,\tilde \Phi)$, we may assume without loss of generality that $A$ is in the temporal gauge and that $(A, \Phi)|_\mho$ is ``known'' in the sense that we have fixed a gauge equivalent copy of $(A,\Phi)$ in $\mho$. 

This replacement should not cause confusion, however, some care is needed when interpreting the results below. We will show that $\mathcal D_{(A,\Phi)}$ determines the broken light ray transform $\mathbf{S}_{z \gets y \gets x}^{A, \Ad \oplus \rho}$
and apply Corollary \ref{corollary:rhofaithful}
to show that $(A,\Phi)$ is then determined up to the action of a gauge transformation $\U$.
Although this gauge transformation satisfies $\U = \id$ in $\mho$,
 it does not follow that the gauge invariance is eliminated in $\mho$.
Rather, in $\mho$, the construction simply yields the gauge equivalent copy of $(A, \Phi)$ that we happened to fix. 
 	
\begin{proof}
To show the existence of $(\tilde{A}', \tilde{\Phi}')$,
we simply let $(\tilde{A}', \tilde{\Phi}') = \mathscr T(A, \Phi)|_\mho$.
Suppose now that $(\tilde A', \tilde \Phi') \in \mathcal D_{(A,\Phi)}$, $\tilde A'$ is in the temporal gauge in $\mho$ 
and that \eqref{eq:ymh1}--\eqref{eq:ymh2} hold in $\mho$.
Now the definition of $\mathcal D_{(A,\Phi)}$ implies that there is 
such a Yang--Mills--Higgs field $(B,\Xi)$ in $\mathbb D$ that $(\tilde A', \tilde \Phi') = (B,\Xi)|_\mho$ and that $(A, \Phi) \sim (B,\Xi)$ near $\p^- \mathbb D$. Let $(\tilde A, \tilde \Phi) = \mathscr T(B,\Xi)$. It follows from $\tilde A'$ being in the temporal gauge and $\tilde{A}' = B|_\mho$ that $(\tilde A', \tilde \Phi') = (\tilde A, \tilde \Phi)|_\mho$. Moreover, $(\tilde A, \tilde \Phi) \sim (A, \Phi)$ near $\p^- \mathbb D$. Finally, Proposition \ref{prop_tempg_uniq} implies that $(\tilde A, \tilde \Phi) \sim (A, \Phi)$ in the whole of $\mathbb D$.
\end{proof}
 	 
\begin{proposition} \label{prop : source-to-solution map}  
For any point $x_0 \in \mho$ there exist a neighbourhood $\mho_0 \subset \mho$ of $x_0$ and a neighbourhood $\mathcal{H}_0$ of the zero function in $H_0^7(\mho_0; \g \oplus \g \oplus \g \oplus \mathcal{W})$ such that the data set $\mathcal{D}_{(A, \Phi)}$ determines the following source-to-solution map $\mathbf{L}$ on $\mathcal H_0$, 
\begin{align*}\mathbf{L}(J_1, J_2, J_3, \mathcal{F}) =\mathscr{T}(V, \Psi)|_\mho  ,  \end{align*} 
where $V = W + A$, $\Psi = \Upsilon + \Phi$ and $(W, \Upsilon, J_0)$ solves the system \eqref{eq:perturbed YMH1}--\eqref{comp_equiv} for given $(J_1, J_2, J_3, \mathcal{F}) \in \mathcal{H}_0$.
\end{proposition}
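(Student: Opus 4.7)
The plan is to realize $\mathbf L$ via the forward problem in the relative Lorenz gauge (Proposition~\ref{prop:forward problem relative}), verify that its output, after passing to the temporal gauge, lies in $\mathcal D_{(A,\Phi)}$, and argue that every ingredient depends only on the data set. First I would invoke Lemma~\ref{lem_pass_temp_gauge} to replace $(A,\Phi)$ by a gauge equivalent Yang--Mills--Higgs field whose restriction to $\mho$ is a distinguished temporal-gauge element of $\mathcal D_{(A,\Phi)}$ and which is itself in the temporal gauge on all of $\mathbb D$. This neither changes $\mathcal D_{(A,\Phi)}$ nor the map $\mathbf L$ we wish to define. Shrinking around $x_0$, I would then choose $\mho_0 \subset \mho$ (together with the set $\mathcal H_0$ of admissible sources) small enough so that Proposition~\ref{prop:forward problem relative} applies after extending by zero outside $\mathbb D$.

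Given $(J_1,J_2,J_3,\mathcal F) \in \mathcal H_0$, Proposition~\ref{prop:forward problem relative} produces a unique small solution $(W,\Upsilon,J_0)$ of \eqref{eq:perturbed YMH1}--\eqref{comp_equiv} with vanishing Cauchy data at $t = -1$, depending smoothly on $(J_1,J_2,J_3,\mathcal F)$. Setting $V := A + W$ and $\Psi := \Phi + \Upsilon$, Lemma~\ref{lem_undo_gauge} tells me that $(V,\Psi)$ solves the perturbed Yang--Mills--Higgs system \eqref{eq:ymhs1}--\eqref{eq:ymhs2} with effective source $(J,\mathcal F) = (J_0\,dx^0 + J_j\,dx^j,\mathcal F)$. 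To put $(V,\Psi)|_\mho$ into the data set I need two things: (i) the effective source is supported in $\mho$, so that $(V,\Psi)$ solves the homogeneous YMH system on $\mathbb D \setminus \mho$; and (ii) $(V,\Psi) \sim (A,\Phi)$ near $\p^- \mathbb D$. Item (ii) is immediate from the finite speed of propagation statement in Proposition~\ref{prop:forward problem relative}: it forces $(W,\Upsilon)$ to vanish near $\p^- \mathbb D$, so the identity gauge works there.

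The hard part of the plan is (i), which hinges on a support argument for $J_0$. The right-hand side of the ODE~\eqref{eq_J0} for $J_0$ is a linear combination of terms each containing either a factor $J_j$ or a factor $\mathcal F$, hence is supported in $\mho_0$. Since \eqref{eq_J0} is a pure time ODE with vanishing initial data at $t = -1$, the support of $J_0$ is contained in the union of the timelike fibers through $\mho_0$. Because $\mho$ is the intersection of $\mathrm{int}(\mathbb D)$ with the spatial cylinder $\{|x| < \eps_0\}$, every such fiber lies in $\mho$. Combined with the support of the spatial part of $J$ and of $\mathcal F$, this gives (i). Together with (ii), I conclude $(V,\Psi)|_\mho \in \mathcal D_{(A,\Phi)}$.

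Finally, applying the temporal-gauge map $\mathscr T$ from \eqref{temporal_U} preserves membership in $\mathcal D_{(A,\Phi)}$, since its gauge element lies in $G^0(\mathbb D,p)$. I set $\mathbf L(J_1,J_2,J_3,\mathcal F) := \mathscr T(V,\Psi)|_\mho$; smoothness follows from Proposition~\ref{prop:forward problem relative} together with smooth dependence of $\U$ in \eqref{temporal_U}. The determination of $\mathbf L$ by $\mathcal D_{(A,\Phi)}$ is then apparent: the temporal-gauge representative of $(A,\Phi)$ used in the setup is extracted directly from $\mathcal D_{(A,\Phi)}$ via Lemma~\ref{lem_pass_temp_gauge}, and every subsequent step is a deterministic operation on this representative and the given source.
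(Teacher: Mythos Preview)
Your construction of $\mathbf L$ is fine, but the final paragraph, where you argue that $\mathcal D_{(A,\Phi)}$ determines $\mathbf L$, has a genuine gap. You say that once the temporal-gauge representative $(A,\Phi)|_\mho$ is extracted from the data, ``every subsequent step is a deterministic operation on this representative and the given source.'' But the subsequent step is solving the system \eqref{eq:perturbed YMH1}--\eqref{comp_equiv} on all of $M_1$, and this requires $(A,\Phi)$ on all of $\mathbb D$, not merely on $\mho$. Finite speed of propagation lets you compute $(W,\Upsilon,J_0)$ inside a small diamond $\mathbb D_{(x_0,\epsilon)}\subset\mho$ containing $\mho_0$, hence in the lower part $\mho_-$ of $\mho$; it does \emph{not} let you compute $\mathscr T(V,\Psi)$ in the upper part $\mho_+$, because waves leave $\mho$, interact with $(A,\Phi)$ outside $\mho$, and return.

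What the paper does instead is to characterize $\mathbf L(J_1,J_2,J_3,\mathcal F)$ intrinsically in terms of the data set: it is $\mathscr T(V',\Psi')$ for any $(V',\Psi')\in\mathcal D_{(A,\Phi)}$ satisfying three conditions (L1)--(L3), namely that $(V',\Psi')$ agrees with the computable $(V,\Psi)|_{\mho_-}$, and that the spatial part of $D_{V'}^*F_{V'}+\mathbb J_\rho(d_{V'}\Psi',\Psi')$ and all of $d_{V'}^*d_{V'}\Psi'+\mathcal V'(|\Psi'|^2)\Psi'$ vanish on $\mho_+$. These conditions are checkable from the data alone. The substantive work is then to show that any two elements of $\mathcal D_{(A,\Phi)}$ satisfying (L1)--(L3) have the same $\mathscr T$-image in $\mho$; this uses the pointed gauge group (to force the gauge relating them to be the identity in $\mho$ near $\p^-\mathbb D$), the temporal gauge uniqueness of Proposition~\ref{prop_tempg_uniq}(ii), and the compatibility condition to show that the residual $J_0$-components agree in $\mho_+$. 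Your argument supplies the existence half of this characterization (namely, $(V,\Psi)|_\mho$ itself satisfies (L1)--(L3)) but omits the uniqueness half, which is precisely what converts knowledge of $\mathcal D_{(A,\Phi)}$ into knowledge of $\mathbf L$.
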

\begin{proof}
We fix a point $x_0 = (t_0, x_0^1, x_0^2, x_0^3) \in \mho$ and then choose $\epsilon > 0$ small enough so that the diamond-like neighbourhood $\mathbb{D}_{(x_0, \epsilon)}$ of $x_0$
satisfies $\mathbb{D}_{(x_0, \epsilon)} \subset \mho$.
We denote by $\mho_0$ the interior of 
	\begin{align*}
\{ (t, x^1, x^2, x^3) \in \mathbb{D}_{(x_0, \epsilon)} : t < t_0 \}.
	\end{align*}

Let $\mathcal H$ be the neighbourhood of the zero function for $k = 5$ in Proposition \ref{prop:forward problem relative}
and let 
	\begin{align}\label{H7_sources}
(J_1, J_2, J_3, \mathcal{F}) \in H_0^7(\mho_0; \g\oplus\g\oplus\g\oplus\mathcal{W}) \cap \mathcal H.
	\end{align}
Write $(W, \Upsilon, J_0)$ for the solution of \eqref{eq:perturbed YMH1}--\eqref{comp_equiv} in $M_1$.
(Due to the finite speed of propagation property in Proposition \ref{prop:forward problem relative}, the fact that $(A,\Phi)$ is defined only on $\mathbb D$ is inconsequential.)
It follows from the finite speed of propagation that 
$(W, \Upsilon, J_0)$ vanishes near $\p^- \mathbb D_{(x_0, \epsilon)}$ and in
	\begin{align*}
\{ (t, x^1, x^2, x^3) \in \mathbb D \setminus \mho_0 : t < t_0 \}.
	\end{align*}
By Lemma \ref{lem_pass_temp_gauge} we may assume that $(A, \Phi)$
is known in $\mho$, and the finite speed of propagation implies then that 
$(W, \Upsilon, J_0)$ can be computed in $\mho_0$.

Writing
	\begin{align*}
\mho_- = \{ (t, x^1, x^2, x^3) \in \mho : t < t_0 \},
\quad
\mho_+ = \{ (t, x^1, x^2, x^3) \in \mho : t > t_0 \},
	\end{align*}
we will show that the source-to-solution map satisfies 
	\begin{align}\label{L_characterization}
\mathbf{L}(J_1, J_2, J_3, \mathcal{F}) =   \mathscr{T}(V', \Psi'),
	\end{align}
where $(V', \Psi') \in  \mathcal{D}_{(A, \Phi)}$ is such that
\begin{itemize}
\item[(L1)]  $(V', \Psi')$ coincides with $(V, \Psi)$  in $\mho_-$, 
\item[(L2)] the spatial component of $D^\ast_{V'} F_{V'} + \J_\rho(d_{V'}\Psi', \Psi')$  vanishes in $\mho_+$,
\item[(L3)] $d^\ast_{V'} d_{V'} \Psi' + \mathcal{V}'(|\Psi'|^2) \Psi'$ vanishes in $\mho_+$. 
\end{itemize}
Observe that $(V,\Psi)|_\mho$
satisfies (L1)--(L3). The smoothness assumption (\ref{H7_sources}) together with Proposition \ref{prop:forward problem relative} and the Sobolev embedding $H^6(M_1) \subset C^3(M_1)$ guarantee that $(V,\Psi) \in \mathcal{D}_{(A, \Phi)}$.  
Observe, furthermore, that (L2) and (L3) can be verified given any $(V', \Psi') \in \mathcal{D}_{(A, \Phi)}$. The argument above shows that 
$(V, \Psi)$ can be determined in $\mho_-$, and so (L1) can also be verified by using the data. 
Thus it remains to show \eqref{L_characterization}.

Let $(V', \Psi') \in \mathcal{D}_{(A, \Phi)}$ satisfy (L1)--(L3).
The definition of $\mathcal{D}_{(A, \Phi)}$ implies that 
there are $(\tilde V, \tilde \Psi)$ and $\mathbf u$ such that
$(\tilde V, \tilde \Psi) = (A, \Phi) \cdot \mathbf u$ near $\p^- \mathbb D$.
In $\mho_-$ near $\p^- \mathbb D$
	\begin{align*}
(A, \Phi) \cdot \mathbf u = (\tilde V, \tilde \Psi) = (V', \Psi') = (V, \Psi) = (A, \Phi),
	\end{align*}
and it follows that $\mathbf u = \id$ there.
This is a consequence of the use of the pointed gauge group $G^0(\mathbb{D}, p)$, instead of the full gauge group $C^\infty(\mathbb{D}; G)$, for gauge equivalence. 
Indeed, due to $\mathbf u \in G^0(\mathbb{D}, p)$, there holds $\mathbf u(p) = \id$, and $A \cdot \mathbf u = A$ is equivalent with the differential equation $d\mathbf u = [\mathbf u, A]$. Integrating this equation along a curve starting from $p$ gives $\mathbf u = \id$ in $\mho$ near $\p^- \mathbb D$. 

Let us now pass to the temporal gauge. 
Let $\U$ be as in \eqref{temporal_U} so that $\mathscr T(V) = V \cdot \U$, and define $\tilde \U$ analogously so that 
$\mathscr T(\tilde V) = \tilde V \cdot \tilde \U$.
As $A$ is in the temporal gauge and as $\tilde V = V = A$ near $\p^- \mathbb D$, there holds $\tilde \U = \U = \id$ near $\p^- \mathbb D$.
In particular,
	\begin{align*}
\mathscr T(\tilde V, \tilde \Psi) 
= (\tilde V, \tilde \Psi) \cdot \tilde \U
= (A, \Phi) \cdot (\mathbf u \tilde \U)
= (V, \Psi) \cdot (\mathbf u \tilde \U)
= \mathscr T(V, \Psi) \cdot (\U^{-1}\mathbf u \tilde \U)
	\end{align*}
and $\U^{-1}\mathbf u \tilde \U = \id$ in $\mho$ near $\p^- \mathbb D$.
We will apply Proposition \ref{prop_tempg_uniq}
to $\mathscr T(\tilde V, \tilde \Psi)$ and $\mathscr T(V, \Psi)$.
This shows that $\U^{-1}\mathbf u \tilde \U$ does not depend on time and hence $\U^{-1}\mathbf u \tilde \U = \id$ in $\mho$.
Then \eqref{L_characterization} follows immediately,
and it remains to show that the assumptions of Proposition \ref{prop_tempg_uniq} hold. That is, we still need to show that 
$\mathscr T(\tilde V, \tilde \Psi)$ and $\mathscr T(V, \Psi)$
satisfy \eqref{eq:ymhs1}--\eqref{eq:ymhs2}
in $\mathbb D$ with the same right-hand side.

We write
	\begin{align*}
(V^{(1)}, \Psi^{(1)}) = \mathscr T(V, \Psi),
\quad
(V^{(2)}, \Psi^{(2)}) = \mathscr T(\tilde V,\tilde \Psi),
	\end{align*}
and denote by
\begin{align*}
J^{(j)} &= D_{V^{(j)}}^*F_{V^{(j)}}+\J_{\rho}(d_{V^{(j)}}\Psi^{(j)},\Psi^{(j)}),\\
\mathcal{F}^{(j)} &= d_{V^{(j)}}^*d_{V^{(j)}}\Psi^{(j)} +\mathcal{V}'(|\Psi^{(j)}|^{2})\Psi^{(j)},
 \end{align*} 
the right-hand side of \eqref{eq:ymhs1}--\eqref{eq:ymhs2}. 
There holds $(J^{(j)}, \mathcal F^{(j)}) = 0$, $j=1,2$, in $\mathbb D \setminus \mho$,
and $(J^{(1)}, \mathcal F^{(1)}) = (J^{(2)}, \mathcal F^{(2)})$
in $\mho_-$ since $(V^{(1)}, \Psi^{(1)}) = (V^{(2)}, \Psi^{(2)})$ there due to (L1). 
Let us now restrict our attention to $\mho_+$.
It follows from (L2) and (L3) that $\mathcal F^{(j)} = 0$
and $J^{(j)} = J^{(j)}_0 dx^0$ for some $J^{(j)}_0$.
Lemma \ref{lem_comp_cond} implies then that $D_{V^{(j)}}^* J^{(j)} = 0$. This again is equivalent with 
	\begin{align*}
\p_t J_0^{(j)} + [V_0^{(j)}, J_0] = 0,
	\end{align*}
cf. \eqref{eq_J0}.
But $V^{(j)}$ is in the temporal gauge and thus $\p_t J^{(j)}_0 = 0$.
Recalling that $J_0^{(1)} = J_0^{(2)}$ in $\mho_-$,
we see that $J_0^{(1)} = J_0^{(2)}$ in $\mho_+$ as well.
\end{proof}

 \section{Linearization of the perturbed Yang--Mills--Higgs system} \label{sec:linearization}
 
To recover the background Yang--Mills--Higgs field we will use a modification of the Kurylev--Lassas--Uhlmann multiple linearization scheme. In this section, we shall calculate the threefold linearization of the perturbed Yang--Mills--Higgs system \eqref{eq:perturbed YMH1}--\eqref{eq:perturbed YMH2} in the relative Lorenz gauge. To simplify notations, we shall omit the subscript $\mathcal{W}$ of $\langle \cdot, \cdot \rangle_\mathcal{W}$ hereafter.

  \subsection{Threefold linearization}
  
Given a Yang--Mills--Higgs field $(A, \Phi)$, we consider \eqref{eq:perturbed YMH1}--\eqref{eq:perturbed YMH2} with $J = \sum_{k=1}^3\epsilon_{(k)} J_{(k)}$ and $\mathcal{F} = \sum_{k=1}^3\epsilon_{(k)} \mathcal{F}_{(k)}$ for small $\epsilon_{(k)} \in \R$ and sections $J_{(k)}$ and $\mathcal{F}_{(k)}$ of $T^* M \otimes \g$ and $E$, respectively.
  For the threefold linearization, we consider the following derivatives of the perturbation field $(W, \Upsilon)$ in $\epsilon = (\epsilon_{(1)}, \epsilon_{(2)}, \epsilon_{(3)})$ at $\epsilon = 0$,
  \begin{eqnarray}\label{def_Y} \displaystyle
  	W_{(k)} = \frac{\partial W}{\partial \epsilon_{(k)}}\bigg|_{\epsilon = 0},
  	& \displaystyle W_{(kl)} = \frac{\partial^2 W}{\partial \epsilon_{(k)}\partial \epsilon_{(l)}}\bigg|_{\epsilon = 0},&
  	W_{(123)} = \frac{\partial^3 W}{\partial \epsilon_{(1)}\partial \epsilon_{(2)}\partial \epsilon_{(3)}}\bigg|_{\epsilon = 0};
  	\\ \label{def_Xi} \displaystyle
  	\Upsilon_{(k)} = \frac{\partial \Upsilon}{\partial \epsilon_{(k)}}\bigg|_{\epsilon = 0},
  	&
  	\displaystyle \Upsilon_{(kl)} = \frac{\partial^2 \Upsilon}{\partial \epsilon_{(k)}\partial \epsilon_{(l)}}\bigg|_{\epsilon = 0},
  	&
  	\Upsilon_{(123)} = \frac{\partial^3 \Upsilon}{\partial \epsilon_{(1)}\partial \epsilon_{(2)}\partial \epsilon_{(3)}}\bigg|_{\epsilon = 0}.    \end{eqnarray}
  
  Differentiating \eqref{eq:perturbed YMH1}--\eqref{eq:perturbed YMH2} results in the following system of linear wave equations for $(W_{(k)}, \Upsilon_{(k)})$
  \begin{align}
\label{eqn : linearized 1-1}
  	\Box_{A,\text{Ad}}W_{(k)} + \J_{\rho}(d_{A}\Upsilon_{(k)},\Phi) + Z_{(k)} & = J_{(k)},
\\\label{eqn : linearized 1-2}
  	\Box_{A,\rho}\Upsilon_{(k)} + \mathcal{Z}_{(k)} & = \mathcal{F}_{(k)},
  \end{align}  where the zeroth order terms take the form
  \begin{eqnarray*}
  	Z_{(k)} &=& \star[W_{(k)},\star F_{A}]+\J_{\rho}(d_{A}\Phi,\Upsilon_{(k)})+\J_{\rho}(\rho_{*}(W_{(k)})\Phi,\Phi),\\
  	\mathcal{Z}_{(k)} &=& 2\star(\rho_{*}(W_{(k)})\wedge\star d_{A}\Phi) + \mathcal{V}' (|\Phi|^2)    \Upsilon_{(k)}     + 2  \langle \Phi,  \Upsilon_{(k)} \rangle \Phi.
  \end{eqnarray*}
It turns out that zeroth order terms that are linear in the variables  (\ref{def_Y})--(\ref{def_Xi}) play no role in our subsequent computations. For this reason, we will not give the explicit form of these terms in the computations below.

 Differentiating \eqref{eq:perturbed YMH1}--\eqref{eq:perturbed YMH2}  twice, we obtain a system of linear wave equations for $(W_{(kl)},\Upsilon_{(kl)})$, 
    \begin{align}\label{eqn : linearized 2-1} 
  	\Box_{A,\text{Ad}}W_{(kl)} +\J_{\rho}(d_{A}\Upsilon_{(kl)},\Phi) +Z_{(kl)} &=N_{(kl)}, 
\\\label{eqn : linearized 2-2}
	\Box_{A,\rho}\Upsilon_{(kl)}+\mathcal{Z}_{(kl)}&=\mathcal{N}_{(kl)},
    \end{align}
  where $Z_{(kl)}$ and $\mathcal{Z}_{(kl)}$ are linear and zeroth order in $(W_{(kl)},\Upsilon_{(kl)})$,
and $N_{(kl)}$ and $\mathcal{N}_{(kl)}$ contain the nonlinear terms.
It turns out that the quadratic terms with no extra derivative play no role in the computations below. For this reason, we write 
    \begin{align}\label{NR_kl}
N_{(kl)} = \tilde N_{(kl)} + R_{(kl)}, 
\quad
\mathcal N_{(kl)} = \tilde{\mathcal N}_{(kl)} + \mathcal R_{(kl)}, 
    \end{align}
where the leading terms are
    \begin{align*}
\tilde{N}_{(kl)} 
&= 
-\frac{1}{2} d^\ast [W_{(k)}, W_{(l)}] - \star[W_{(k)}, \star d W_{(l)}] - \frac{1}{2} d^\ast [W_{(l)}, W_{(k)}] - \star[W_{(l)}, \star d W_{(k)}]  
\\&\qquad
- \J_{\rho}(d\Upsilon_{(k)},\Upsilon_{(l)})  - \J_{\rho}(d\Upsilon_{(l)},\Upsilon_{(k)}), 
\\
\tilde{\mathcal{N}}_{(kl)} 
&=  
- 2\star(\rho_{*}(W_{(k)})\wedge\star d\Upsilon_{(l)}) - 2\star(\rho_{*}(W_{(l)})\wedge\star d\Upsilon_{(k)}), 
    \end{align*}
and the residual terms are
    \begin{align*}
R_{(kl)} 
&= 
-\J_{\rho}(\rho_{*}(W_{(k)})\Upsilon_{(l)},\Phi)
-\J_{\rho}(\rho_{*}(W_{(k)})\Phi,\Upsilon_{(l)})
\\&\qquad
-\J_{\rho}(\rho_{*}(W_{(l)})\Upsilon_{(k)},\Phi)
-\J_{\rho}(\rho_{*}(W_{(l)})\Phi,\Upsilon_{(k)})
\\&\qquad
- 2 \langle  \Phi,   \Upsilon_{(l)} \rangle    \Upsilon_{(k)}  
- 2 \langle  \Phi,   \Upsilon_{(k)} \rangle    \Upsilon_{(l)}   
- 2 \langle \Upsilon_{(l)},  \Upsilon_{(k)} \rangle \Phi, 
\\  
\mathcal{R}_{(kl)} 
&= 
- \star(\rho_{*}(W_{(k)})\wedge\star \rho_{*}(W_{(l)})\Phi) 
- \star(\rho_{*}(W_{(l)})\wedge\star \rho_{*}(W_{(k)})\Phi).
  \end{align*}
  
  The third derivatives of \eqref{eq:perturbed YMH1}--\eqref{eq:perturbed YMH2} address $(W_{(123)}, \Upsilon_{(123)})$,
  \begin{align}\label{eqn : linearized 3-1}
  	\Box_{A,\text{Ad}}W_{(123)}+\J_{\rho}(d_{A}\Upsilon_{(123)},\Phi) + Z_{(123)}  &=N_{(123)}, 
\\\label{eqn : linearized 3-2}
  	\Box_{A,\rho}\Upsilon_{(123)}    + \mathcal{Z}_{(123)} &=\mathcal{N}_{(123)},
  \end{align} 
  where $Z_{(123)}$ and $\mathcal{Z}_{(123)}$ are linear and zeroth order in $(W_{(123)},\Upsilon_{(123)})$,
and the nonlinear terms take the form
    \begin{align}\label{NR_123}
N_{(123)} = \tilde N_{(123)} + R_{(123)}, 
\quad
\mathcal N_{(123)} = \tilde{\mathcal N}_{(123)} + \mathcal R_{(123)}, 
    \end{align}
where, denoting by $S_3$ the set of permutations on $\{1, 2, 3\}$,
  \begin{eqnarray*}
  	\tilde{N}_{(123)} &=& -\frac{1}{2} \sum_{\pi \in S_3} \bigg( \frac{1}{2} d^\ast [W_{(\pi(1)\pi(2))}, W_{(\pi(3))}] + \frac{1}{2} d^\ast [W_{(\pi(1))}, W_{(\pi(2)\pi(3))}] \\
  	&&         + \star[W_{(\pi(1)\pi(2))}, \star d W_{(\pi(3))}] + \star[W_{(\pi(1))}, \star d W_{(\pi(2)\pi(3))}]\\
  	&&    + \J_{\rho}(d\Upsilon_{(\pi(1)\pi(2))},\Upsilon_{(\pi(3))})  + \J_{\rho}(d\Upsilon_{(\pi(1))},\Upsilon_{(\pi(2)\pi(3))}) \\
  	&&        + 2\star[W_{(\pi(1))}, \star[W_{(\pi(2))}, W_{(\pi(3))}]] + 2\J_{\rho}(\rho_{*}(W_{(\pi(1))})\Upsilon_{(\pi(2))},\Upsilon_{(\pi(3))})\bigg),  \\
  	\tilde{\mathcal{N}}_{(123)} &=& -  \sum_{\pi \in S_3} \bigg(
  	\star\Big(\rho_{*}(W_{(\pi(1)\pi(2))})\wedge\star d\Upsilon_{(\pi(3))}\Big)   +  \star\Big(\rho_{*}(W_{(\pi(1))})\wedge\star d\Upsilon_{(\pi(2)\pi(3))}\Big)\\
  	&& +  \star\Big(\rho_{*}(W_{(\pi(1))})\wedge\star \rho_{*}(W_{(\pi(2))})\Upsilon_{(\pi(3))}\Big)   +  \langle \Upsilon_{(\pi(1))}, \Upsilon_{(\pi(2))} \rangle \Upsilon_{(\pi(3))} \bigg),
  \end{eqnarray*}
and
  	 	\begin{align*}
R_{(123)} 
&=
-\frac{1}{2} \sum_{\pi \in S_3} \bigg( 
\J_{\rho}(\rho_{*}(W_{(\pi(1)\pi(2))})\Upsilon_{(\pi(3))},\Phi) + \J_{\rho}(\rho_{*}(W_{(\pi(1))})\Upsilon_{(\pi(2)\pi(3))},\Phi) 
\\&\qquad
+ \J_{\rho}(\rho_{*}(W_{(\pi(1)\pi(2))})\Phi,\Upsilon_{(\pi(3))}) + \J_{\rho}(\rho_{*}(W_{(\pi(1))})\Phi,\Upsilon_{(\pi(2)\pi(3))})
)\bigg),
\\
\mathcal{R}_{(123)} 
&=  
-\frac{1}{2} \sum_{\pi \in S_3} \bigg( 
\star\Big(\rho_{*}(W_{(\pi(1)\pi(2))})\wedge\star \rho_{*}(W_{(\pi(3))})\Phi \Big)
\\&\qquad 
+ \star\Big(\rho_{*}(W_{(\pi(1))})\wedge\star \rho_{*}(W_{(\pi(2)\pi(3))})\Phi\Big)
+ 4 \langle \Upsilon_{(\pi(1)\pi(2))},  \Upsilon_{(\pi(3))} \rangle \,\Phi 
\\&\qquad 
+ 4 \langle  \Phi,   \Upsilon_{(\pi(1))} \rangle    \Upsilon_{(\pi(2)\pi(3))}  
+ 4  \langle  \Phi,   \Upsilon_{(\pi(1)\pi(2))} \rangle    \Upsilon_{(\pi(3))} \bigg).
  \end{align*} 
Here $R_{(123)}$ and $\mathcal R_{(123)}$
are quadratic in the variables  (\ref{def_Y})--(\ref{def_Xi})
and contain no derivatives. 
The cubic terms, all of which are of zeroth order in the first place, will be important later and are thus included in 
$\tilde N_{(123)}$ and $\tilde{\mathcal N}_{(123)}$.

\subsection{The threefold linearization and the source-to-solution map}
  
We aim to recover the broken light ray transforms
$\mathbf{S}_{z \gets y \gets x}^{A, \Ad}$
and $\mathbf{S}_{z \gets y \gets x}^{A, \rho}$
on $\mathbb S^+(\mho)$, as defined in Section \ref{sec_S}.
Write 
    \begin{align*}
\cdot^\ast : T_x M \rightarrow T^\ast_x M, \quad
\cdot_\ast : T_x^* M \rightarrow T_x M
    \end{align*}
for the tangent-cotangent isomorphisms induced by the Minkowski metric. Let 
    \begin{align}\label{points_xyz}
(x,y,z) \in \mathbb S^+(\mho).
    \end{align}
After a rotation in the spatial coordinates, we may assume that 
$(x-y)^\ast$, the covector version of $x-y$, is collinear with
    \begin{align}
\label{eqn : xi1} \xi_{(1)} = (1, 1, 0, 0)
    \end{align}
and that $(y-z)^*$ is collinear with 
    \begin{align}
\label{eqn : eta} \eta = (1, -a(r), r, 0),
    \end{align}
where $a(r) = \sqrt{1 - r^2}$ for some $-1 < r < 1$. 
Define for small $s > 0$
    \begin{align}
\label{eqn : xi2} \xi_{(2)} &= (1, a(s), s, 0) \\ \label{eqn : xi3} \xi_{(3)} &= (1, a(s), -s, 0)
    \end{align}
and choose $x_{(k)}$, $k=2,3$, so that $(x_{(k)} - y)^*$ is collinear with $\xi_{(k)}$ and $x_{(k)} \to x$ as $s \to 0$.
Moreover, write $x_{(1)} = x$.

For small $s > 0$ and $\epsilon = (\epsilon_{(1)}, \epsilon_{(2)}, \epsilon_{(3)})$ near $0 \in \mathbb{R}^3$, we apply the source-to-solution map $\mathbf{L}$ to the following sources    
   \begin{align}\label{eqn : source for StS}(J_1, J_2, J_3, \mathcal{F}) &= (J_1(\epsilon, s), J_2(\epsilon, s), J_3(\epsilon, s), \mathcal{F}(\epsilon, s)), \\ \notag J_j(\epsilon, s) &= \epsilon_{(1)} J_{(1), j}(s) + \epsilon_{(2)} J_{(2), j}(s) + \epsilon_{(3)} J_{(3), j}(s),\\ \notag \mathcal{F}(\epsilon, s) &= \epsilon_{(1)} \mathcal{F}_{(1)}(s) + \epsilon_{(2)} \mathcal{F}_{(2)}(s) + \epsilon_{(3)} \mathcal{F}_{(3)}(s), \\ \notag J_{(k), j}(s) &= b_{(k), j} \chi_{(k)} \delta_{x_{(k)}}, \quad k = 1, 2, 3, j = 1, 2, 3,\\ \notag \mathcal{F}_{(k)}(s) &= \upsilon_{(k)} \chi_{(k)} \delta_{x_{(k)}}, \quad k = 1, 2, 3,\end{align} 
   where 
   \begin{itemize}
   	\item $b_{(k), j} \in \mathfrak{g}$ and $\upsilon_{(k)} \in \mathcal{W}$, for $k = 1, 2, 3$ and $j = 1, 2, 3$;
   	\item $\delta_{x_{(k)}}$ is the Dirac delta distribution at $x_{(k)}$, $k = 1, 2, 3$;
   	
   	\item $\chi_{(1)}$ is a microlocal cut-off near $(x_{(1)}, - \xi_{(1)})$, whilst $\chi_{(k)}$ is a microlocal cut-off near $(x_{(k)},   \xi_{(k)})$, $k = 2, 3$; 
   	\item the principal symbol $\sigma[\chi_{(k)}]$ is positively homogeneous of degree $q$ with $q \leq -9$;
   	\item $\mho_{(k)} \cap \mathcal J^+(\mho_{(l)}) = \emptyset$ for all $k \ne l$, where $\mho_{(k)} \subset \mho$ is a neighbourhood  of $x_{(k)}$ containing the supports of $J_{(k),j}$, $j=1,2,3$, $\mathcal F_{(k)}$ and 
   	\begin{align*}
   		\mathcal J^+(\mho_{(k)}) = \{y \in \R^{1+3} :
   		\text{$x < y$ or $x = y$ for some $x \in \mho_{(k)}$} \}; \end{align*}
   	
   	\item $\hat \mho_{(k)} \cap \Gamma_{(l)} = \emptyset$ for all $k \ne l$, where
   	\begin{align*}
   		\hat \mho_{(k)} &= \{(t,x') \in \R^{1+3}: (\tilde t, x') \in \mho_{(k)} \text{ for some $\tilde t \le t$} \},
   		\\
   		\Gamma_{(k)} &= \{x_{(k)} - t\xi_* : t \ge 0,\ (x_{(k)}, \xi) \in \WF(\chi_{(k)}) \}.
   	\end{align*}   	
   \end{itemize}

We shall draw an explicit link between 
the derivative 
    \begin{align*}
\partial_{\epsilon_{(1)}}\partial_{\epsilon_{(2)}}\partial_{\epsilon_{(3)}} \mathbf{L} (J_1, J_2, J_3, \mathcal{F})|_{\epsilon = 0} 
    \end{align*}
of the source-to-solution map $\mathbf{L}$ 
and the threefold linearization \eqref{eqn : linearized 3-1}--\eqref{eqn : linearized 3-2}, at the level of principal symbols at $(z,\eta)$.
Recall that $\mathbf{L}$ was obtained by solving the system \eqref{eq:perturbed YMH1}--\eqref{comp_equiv} 
and then taking the restriction of the perturbed field 
    \begin{align}\label{W2V}
(V, \Psi) = (A + W, \Phi + \Upsilon)
    \end{align}
to $\mho$ in the temporal gauge.
On the other hand, the threefold linearization \eqref{eqn : linearized 3-1}--\eqref{eqn : linearized 3-2} was derived 
from \eqref{eq:perturbed YMH1}--\eqref{eq:perturbed YMH2}
only, that is, the compatibility condition \eqref{comp_equiv}
was neglected in the derivation and $J_0$ was considered as a free variable.
Despite the fact that \eqref{comp_equiv} causes $J_0$ to depend on $\epsilon$ in a nonlinear way, we have 

\begin{lemma} \label{lemma : source reduction}
Suppose that $(W, \Upsilon, J_0)$ solves the system \eqref{eq:perturbed YMH1}--\eqref{comp_equiv} with the source $$(J_1, J_2, J_3, \mathcal{F})$$ as in \eqref{eqn : source for StS}, and denote the derivatives of $(W, \Upsilon)$ in $\epsilon = (\epsilon_{(1)}, \epsilon_{(2)}, \epsilon_{(3)})$ at $\epsilon = 0$ $$(W_{(k)}, \Upsilon_{(k)}, W_{(kl)}, \Upsilon_{(kl)},  W_{(123)}, \Upsilon_{(123)}),$$   as in \eqref{def_Y}--\eqref{def_Xi}. Then
\begin{itemize}
   		\item   $(W_{(k)}, \Upsilon_{(k)})$ solves \eqref{eqn : linearized 1-1}--\eqref{eqn : linearized 1-2} with sources $(J_{(k)}, \mathcal{F}_{(k)})$ 
where
    \begin{align*}
J_{(k)} = \partial_{\epsilon_{(k)}} J_0|_{\epsilon=0} dx^0
+ J_{(k),1} dx^1 + J_{(k),2} dx^2 + J_{(k),3} dx^3;
    \end{align*}
   		\item  
  		 $(W_{(kl)}, \Upsilon_{(kl)},  W_{(123)}, \Upsilon_{(123)})$ solve \eqref{eqn : linearized 2-1}--\eqref{eqn : linearized 3-2} modulo smooth remainder terms.	
\end{itemize}  
   \end{lemma}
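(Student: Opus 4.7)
My plan is to differentiate the coupled system \eqref{eq:perturbed YMH1}--\eqref{comp_equiv} in $\epsilon=(\epsilon_{(1)},\epsilon_{(2)},\epsilon_{(3)})$ at $\epsilon=0$ and organise the resulting expressions as the linearized equations \eqref{eqn : linearized 1-1}--\eqref{eqn : linearized 3-2} plus extra source contributions of the form $\partial^n_{\epsilon}J_0|_{\epsilon=0}\,dx^0$ on the right-hand side. These extra terms arise precisely because the compatibility condition \eqref{comp_equiv} makes $J_0$ a nonlinear functional of $\epsilon$ through the ODE \eqref{eq_J0}. The proof then reduces to showing that the $n=1$ contribution is precisely the time component of $J_{(k)}$ stated in the lemma, while the $n=2,3$ contributions are smooth.

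At first order, the vanishing initial conditions give $W|_{\epsilon=0}=0$ and $\Upsilon|_{\epsilon=0}=0$, so a single differentiation of \eqref{eq:perturbed YMH1}--\eqref{eq:perturbed YMH2} in $\epsilon_{(k)}$ at $\epsilon=0$ reproduces \eqref{eqn : linearized 1-1}--\eqref{eqn : linearized 1-2} with source $\partial_{\epsilon_{(k)}}(J,\mathcal F)|_{\epsilon=0}$; the spatial components of $J$ are linear in $\epsilon$ by construction and contribute $J_{(k),j}\,dx^j$, while the time component contributes $\partial_{\epsilon_{(k)}}J_0|_{\epsilon=0}\,dx^0$, obtained by linearizing \eqref{eq_J0}. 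This matches the prescription in the lemma. At second and third order, the higher derivatives $\partial^2_{\epsilon_{(k)}\epsilon_{(l)}}J_0|_{\epsilon=0}$ and $\partial^3_{\epsilon_{(1)}\epsilon_{(2)}\epsilon_{(3)}}J_0|_{\epsilon=0}$ solve ODEs in $t$ whose right-hand sides are bilinear, respectively trilinear, combinations of the first-order fields $W_{(k)},\Upsilon_{(k)},\partial_{\epsilon_{(k)}}J_0|_{\epsilon=0}$ paired with the prescribed $J_{(l),j},\mathcal F_{(l)}$ or the background $(A,\Phi)$.

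The central observation is that in every cross-pairing with $k\ne l$, one factor is supported in $\mathcal J^+(\mho_{(l)})$, since the wave fields $W_{(l)},\Upsilon_{(l)}$ at $\epsilon=0$ solve linear wave equations with sources confined to $\mho_{(l)}$ and vanishing Cauchy data, while the other factor is supported in $\mho_{(k)}$. The hypothesis $\mho_{(k)}\cap\mathcal J^+(\mho_{(l)})=\emptyset$ then forces such products to vanish pointwise. The surviving terms involving $\partial_{\epsilon_{(l)}}J_0|_{\epsilon=0}$, which is supported in $\hat\mho_{(l)}$ by integration of the defining ODE in $t$, are controlled by the wavefront hypothesis $\hat\mho_{(k)}\cap\Gamma_{(l)}=\emptyset$: this transversality rules out conormal interaction with the singular data emanating from $x_{(k)}$ and leaves only smooth contributions. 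I expect the main obstacle to be the careful enumeration of all terms generated by two- and three-fold differentiation of the couplings $[W_0,J_0]$, $[W^j,J_j]$ and $\J_\rho(\mathcal F,\Upsilon)$ in \eqref{eq_J0}, and verifying case by case that each falls into the vanishing or smooth category; the structure of the source \eqref{eqn : source for StS} together with the causal and microlocal separation hypotheses on the $\mho_{(k)}$ is specifically arranged to make this bookkeeping succeed.
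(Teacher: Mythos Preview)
Your proposal is correct and follows essentially the same route as the paper: differentiate \eqref{eq:perturbed YMH1}--\eqref{comp_equiv}, reduce the second claim to showing that $\vartheta_{(kl)}:=\partial^2_{\epsilon_{(k)}\epsilon_{(l)}}J_0|_{\epsilon=0}$ and $\vartheta_{(123)}$ are smooth, and verify this by combining the causal separation $\mho_{(k)}\cap\mathcal J^+(\mho_{(l)})=\emptyset$ with the microlocal separation $\hat\mho_{(k)}\cap\Gamma_{(l)}=\emptyset$. Two inputs the paper makes explicit that your sketch leaves implicit are worth flagging: first, one needs that $\vartheta_{(k)}$ is singular \emph{only at $x_{(k)}$}, which follows from propagation of singularities for $\partial_t+[A_0,\cdot]$ because its characteristic set $\{\tau=0\}$ is disjoint from the wavefront set of the source (support in $\hat\mho_{(k)}$ alone is not enough to conclude smoothness of $[W_{(l),0},\vartheta_{(k)}]$); second, the third-order step requires knowing that $W_{(kl)}$ can be singular only on $\Gamma_{(k)}\cup\Gamma_{(l)}$, which uses the product calculus for conormal distributions together with the observation that every lightlike covector in $N^*(K_{(k)}\cap K_{(l)})$ already lies in $N^*K_{(k)}\cup N^*K_{(l)}$.
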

   \begin{proof}
The first claim follows simply by differentiating \eqref{eq:perturbed YMH1} with respect to $\epsilon_{(k)}$.
Write
 \begin{eqnarray*}  
 	\vartheta_{(k)} = \frac{\partial J_0}{\partial \epsilon_{(k)}}\bigg|_{\epsilon = 0},
 	&
 	\displaystyle \vartheta_{(kl)} = \frac{\partial^2 J_0}{\partial \epsilon_{(k)}\partial \epsilon_{(l)}}\bigg|_{\epsilon = 0},
 	&
 	\vartheta_{(123)} = \frac{\partial^3 J_0}{\partial \epsilon_{(1)}\partial \epsilon_{(2)}\partial \epsilon_{(3)}}\bigg|_{\epsilon = 0}.    \end{eqnarray*}  
The second claim follows by differentiating \eqref{eq:perturbed YMH1} once we have shown that $\vartheta_{(kl)}$
and $\vartheta_{(123)}$ are smooth.

Recall that \eqref{comp_equiv} is equivalent with \eqref{eq_J0}.
Differentiating \eqref{eq_J0} gives 
    \begin{align}
\label{eqn : first derivative of gauge condition}
\partial_t \vartheta_{(k)} + [A_0, \vartheta_{(k)}]  
&= 
\partial^j J_{(k), j} + [A^j, J_{(k), j}] 
- \mathbb J_\rho(\mathcal F_{(k)}, \Phi),
\\\label{eqn : second derivative of gauge condition}
\partial_t \vartheta_{(kl)} + [A_0, \vartheta_{(kl)}]  
&=  
- [W_{(k), 0}, \vartheta_{(l)}] 
- [W_{(l), 0}, \vartheta_{(k)}]     
+ [W^j_{(l)}, J_{(k), j}] 
+ [W_{(k)}^j, J_{(l), j}]
\\\nonumber&\qquad  
- \mathbb J_\rho(\mathcal F_{(k)}, \Upsilon_{(l)})
- \mathbb J_\rho(\mathcal F_{(l)}, \Upsilon_{(k)}),
    \end{align}
and
    \begin{align}
\label{eqn : third derivative of gauge condition}
&\partial_t \vartheta_{(123)} + [A_0, \vartheta_{(123)}]  
\\\nonumber&\quad=\frac{1}{2} \sum_{\pi \in S_3} \bigg(
- [W_{(\pi(1)\pi(2)), 0}, \vartheta_{(\pi(3))}] 
- [W_{(\pi(1)), 0}, \vartheta_{(\pi(2)\pi(3))}]  
\\\nonumber&\qquad  
+ [W^j_{(\pi(1)\pi(2))}, J_{(\pi(3)), j}] 
- \mathbb J_\rho(\mathcal F_{(\pi(1))}, \Upsilon_{(\pi(2)\pi(3))})
\bigg).
    \end{align}

Let us show that $\vartheta_{(k)}$ is singular only at $x_{(k)}$. 
If we use the notation \[x = (t, x') = (x^0, x^1, x^2, x^3) \in \R^{1+3}, \quad \xi = (\tau, \xi') = (\xi_0, \xi_1, \xi_2, \xi_3) \in T^\ast_x \R^{1+3},\] the differential operator $\partial_t + [A_0, \cdot]$ has the characteristic variety $\{\tau = 0\} \subset T^\ast \R^{1+3}$.
The wavefront set of the right-hand side of (\ref{eqn : first derivative of gauge condition})
is contained in a small conical neighborhood of $(x_{(k)}, \pm \xi_{(k)})$ in $T_{x_{(k)}}^* \R^{1+3}$, and hence it is  disjoint from $\{\tau = 0\}$.
It follows from H\"ormander's propagation of singularities theorem \cite[Th. 26.1.1]{Hormander-Vol4} that $\vartheta_{(k)}$ is singular only at $x_{(k)}$. 

The structure of $\vartheta_{(k)}$ can be described even more precisely. The right-hand side of (\ref{eqn : first derivative of gauge condition}) is a conormal distribution associated to $\{x_{(k)}\}$, with the wavefront set contained in 
	\begin{align}\label{WF_source}
\WF(\chi_{(k)}) \cap T_{x_{(k)}}^* \R^{1+3}.
	\end{align}
As $\partial_t + [A_0, \cdot]$ is elliptic on \eqref{WF_source},
$\vartheta_{(k)}$ is also a conormal distribution associated to $\{x_{(k)}\}$ and $\WF(\vartheta_{(k)})$ is contained in \eqref{WF_source}.

Recall that $(W_{(k)}, \Upsilon_{(k)})$ satisfies the wave equation \eqref{eqn : linearized 1-1}--\eqref{eqn : linearized 1-2}
together with the vanishing initial conditions. 
The source in \eqref{eqn : linearized 1-1}--\eqref{eqn : linearized 1-2} is a conormal distribution associated to $\{x_{(k)}\}$, with the wavefront set contained in \eqref{WF_source}.
It follows that, away from $x_{(k)}$, $(W_{(k)}, \Upsilon_{(k)})$ is a conormal distribution associated to 
    \begin{align}\label{def_K}
K_{(k)} &= \{x_{(k)} - t\xi_* : t > 0,\ \xi \in C_{(k)},\ \xi_\alpha \xi^\alpha = 0 \},
    \end{align}
where $C_{(k)} \subset T_{x_{(k)}}^* \R^{1+3}$ is any conical neighbourhood of the set \eqref{WF_source}.
Moreover, away from $x_{(k)}$, $(W_{(k)}, \Upsilon_{(k)})$
is singular only on $\Gamma_{(k)}$.
The structure of $(W_{(k)}, \Upsilon_{(k)})$ is more complicated near $x_{(k)}$ but its wavefront set is contained in the union of $\Gamma_{(k)}$ and the set \eqref{WF_source}.
We refer to \cite{CLOP}, see Theorem 3, together with Section 2.4 and Appendix B, for a detailed discussion. 

Let us now consider $\vartheta_{(kl)}$.
As $\vartheta_{(k)}$ satisfies \eqref{eqn : first derivative of gauge condition} together with the vanishing initial condition, 
there holds 
    \begin{align*}
\supp(\vartheta_{(k)}) \subset \hat \mho_{(k)} \subset \mathcal J^+(\mho_{(k)}).
    \end{align*}
Hence $\mho_{(k)} \cap \mathcal J^+(\mho_{(l)}) = \emptyset$ implies that the terms 
$[W^j_{(l)}, J_{(k), j}]$ and 
$\mathbb J_\rho(\mathcal F_{(k)}, \Upsilon_{(l)})$
on the right-hand side of \eqref{eqn : second derivative of gauge condition} vanish. The same holds for 
$[W_{(k)}^j, J_{(l), j}]$ and $\mathbb J_\rho(\mathcal F_{(l)}, \Upsilon_{(k)})$
by symmetry, and we have 
    \begin{align}
\label{eqn : second derivative of gauge condition simplified}
\partial_t \vartheta_{(kl)} + [A_0, \vartheta_{(kl)}]  
&=  
- [W_{(k), 0}, \vartheta_{(l)}] 
- [W_{(l), 0}, \vartheta_{(k)}].
    \end{align}
As $W_{(l)}$
is singular only on $\Gamma_{(l)}$, $\hat \mho_{(k)} \cap \Gamma_{(l)} = \emptyset$, we deduce that $W_{(l), 0}$ is smooth in the support of $\vartheta_{(k)}$.
The opposite holds as well, that is, 
$\vartheta_{(k)}$ is smooth in the support of $W_{(l), 0}$.
This follows from $\vartheta_{(k)}$ being singular only at $x_{(k)}$
and $\mho_{(k)} \cap \mathcal J^+(\mho_{(l)}) = \emptyset$. 
In particular, the term $[W_{(l), 0}, \vartheta_{(k)}]$
is smooth. The same holds for $[W_{(k), 0}, \vartheta_{(l)}]$ by symmetry. We conclude that the right-hand side of \eqref{eqn : second derivative of gauge condition simplified} is smooth, and so is then $\vartheta_{(kl)}$.

We have shown that $\vartheta_{(kl)}$ is smooth, 
and turn now to $\vartheta_{(123)}$.
As $\vartheta_{(kl)}$ satisfies \eqref{eqn : second derivative of gauge condition simplified} together with the vanishing initial condition, 
there holds 
    \begin{align*}
\supp(\vartheta_{(kl)}) \subset \hat \mho_{(k)} \cup \hat \mho_{(l)}.
    \end{align*}
Now $\hat \mho_{(k)} \cap \Gamma_{(l)} = \emptyset$,
with $k=\pi(2),\pi(3)$ and $l = \pi(1)$, implies that 
the term 
$[W_{(\pi(1)), 0}, \vartheta_{(\pi(2)\pi(3))}]$ 
on the right-hand side of \eqref{eqn : third derivative of gauge condition}
is smooth.
Moreover, as both $N_{(kl)}$ and $\mathcal N_{(kl)}$
are supported in $\mathcal J^+(\mho_{(k)}) \cap \mathcal J^+(\mho_{(l)})$, it follows from 
\eqref{eqn : linearized 2-1}--\eqref{eqn : linearized 2-2}, with $\vartheta_{(kl)}$ added on the right-hand side of \eqref{eqn : linearized 2-1}, and the vanishing initial conditions 
that $W_{(kl)}$ and $\Upsilon_{(kl)}$
are supported in $\mathcal J^+(\mho_{(k)}) \cup \mathcal J^+(\mho_{(l)})$.
Using $\mho_{(k)} \cap \mathcal J^+(\mho_{(l)}) = \emptyset$
with $k = \pi(3)$ and $l=\pi(1),\pi(2)$
we see that $[W^j_{(\pi(1)\pi(2))}, J_{(\pi(3)), j}]$ vanishes.
Similarly, we have that $\mathbb J_\rho(\mathcal F_{(\pi(1))}, \Upsilon_{(\pi(2)\pi(3))}) = 0$.
Recalling that $\vartheta_{(\pi(3))}$ is singular only at $x_{(3)}$, we see, furthermore, that it is smooth in the support of 
$W_{(\pi(1)\pi(2))}$.
That $[W_{(\pi(1)\pi(2)), 0}, \vartheta_{(\pi(3))}]$
is smooth follows then from 
$\hat \mho_{(k)} \cap \Gamma_{(l)} = \emptyset$,
with $k=\pi(3)$ and $l = \pi(1),\pi(2)$,
and the fact that 
$W_{(kl)}$ may be singular only on 
$\Gamma_{(k)} \cup \Gamma_{(l)}$. 
Before showing this fact, let us point out that, as all the terms on the right-hand side of \eqref{eqn : third derivative of gauge condition} are smooth, so is $\vartheta_{(123)}$.

We finish the proof by showing that $W_{(kl)}$ may be singular only on 
$\Gamma_{(k)} \cup \Gamma_{(l)}$.
Recall that $(W_{(k)}, \Upsilon_{(k)})$ is a conormal distribution associated to $K_{(k)}$ and singular only on $\Gamma_{(k)}$.
It follows from \cite[Th. 8.2.10]{H1} that
    \begin{align*}
\WF(N_{(kl)}) \subset N^* K_{(k)} \cup N^* K_{(l)}
\cup N^* (K_{(k)} \cap K_{(l)})
    \end{align*}
away from $x_{(k)}$ and $x_{(l)}$.
But all lightlike directions in $N^* (K_{(k)} \cap K_{(l)})$ are contained in $N^* K_{(k)} \cup N^* K_{(l)}$, and \cite[Th. 26.1.1]{Hormander-Vol4} 
implies that $W_{(kl)}$ can be singular only on 
$K_{(k)} \cup K_{(l)}$.
As $C_{(k)}$ is an arbitrary conical neighbourhood of $\WF(\chi_{(k)}) \cap T_{x_{(k)}}^* \R^{1+3}$, we see that $W_{(kl)}$ can be singular only on 
$\Gamma_{(k)} \cup \Gamma_{(l)}$.
   \end{proof}

We showed in \cite{CLOP}
that the future flowout of $H_{\sigma[\Box]}$ from
    \begin{align}\label{flowout_generator}
N^* (K_{(1)} \cap K_{(2)} \cap K_{(3)}) \cap \{\sigma[\Box] = 0\}
    \end{align}
is the conormal bundle of a smooth manifold away from $K_{(1)} \cap K_{(2)} \cap K_{(3)}$, and that $(z,\eta)$ belongs to this bundle. Here $K_{(k)}$ is as in \eqref{def_K}, $z$ as in (\ref{points_xyz}),
and $\eta$ as in (\ref{eqn : eta}).
Analogously to \cite{CLOP}, it follows that 
$W_{(123)}$ and $\Upsilon_{(123)}$, as in Lemma \ref{lemma : source reduction}, are conormal distributions associated to this manifold near the point $z$,
and we may consider their principal symbols at $(z,\eta)$.

   \begin{proposition}  \label{prop:from StS to linearized waves}
Let $(J_1, J_2, J_3, \mathcal{F})$ be the source in \eqref{eqn : source for StS} and let $W_{(123)}$ and $\Upsilon_{(123)}$ be as in Lemma \ref{lemma : source reduction}. Then, writing 
\begin{align*}
b_{(0)} &= 
- \frac{\eta_\beta}{\eta_0} \sigma[W_{(123), 0}](z, \eta)  + \sigma[W_{(123),\beta}](z, \eta), 
\\
\upsilon_{(0)} &= \sigma[\Psi_{(123)}](z, \eta), 
   		\end{align*}
there holds
    \begin{align*}
\sigma[\partial_{\epsilon_{(1)}}\partial_{\epsilon_{(2)}}\partial_{\epsilon_{(3)}} \mathbf{L}(J_1, J_2, J_3, \mathcal{F})|_{\epsilon = 0}](z, \eta) = (b_{(0)}, \upsilon_{(0)}).
    \end{align*}
   \end{proposition}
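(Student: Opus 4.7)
The plan is to reduce the triple derivative of $\mathbf L$ to the quantities $W_{(123)}, \Upsilon_{(123)}$ already analyzed in the threefold linearization, and then track how the temporal gauge transformation $\mathscr T$ acts on their principal symbols at the interaction point $(z,\eta)$. By Lemma~\ref{lemma : source reduction}, the triple derivative $\p_{\epsilon_{(1)}}\p_{\epsilon_{(2)}}\p_{\epsilon_{(3)}}(W,\Upsilon)|_{\epsilon=0}$ agrees with $(W_{(123)},\Upsilon_{(123)})$ modulo smooth remainders, so the task reduces to studying how $\mathscr T$ changes their symbols at $(z,\eta)$.

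First I analyze the gauge $\U$ defined by $\p_t \U = -V_0\U$, $\U|_{\p^-\mathbb D}=\id$. Invoking Lemma~\ref{lem_pass_temp_gauge}, I may assume $A$ is in the temporal gauge, so at $\epsilon = 0$ we have $V_0 = 0$ and $\U = \id$. Differentiating the ODE yields the cascade $\p_t U_{(k)} = -W_{(k),0}$, $\p_t U_{(kl)} = -W_{(kl),0} + Q_{(kl)}$, and $\p_t U_{(123)} = -W_{(123),0} + Q_{(123)}$, with vanishing initial data, where $Q_{(kl)}$ and $Q_{(123)}$ are quadratic and cubic polynomials in the lower-order $W_{(\cdot),0}$ and $U_{(\cdot)}$. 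The wavefront analysis from the proof of Lemma~\ref{lemma : source reduction} shows that $W_{(k)}, W_{(kl)}$ are singular only on $\Gamma_{(k)}$ and $\Gamma_{(k)}\cup\Gamma_{(l)}$ respectively, neither of which contains $z$. Since $\eta_0 = 1 \neq 0$, the operator $\p_t$ is elliptic at $(z,\eta)$, so $U_{(k)}$ and $U_{(kl)}$ are smooth at $z$, while $U_{(123)}$ inherits its singularity at $(z,\eta)$ solely from $W_{(123),0}$; elliptic inversion gives
\[
\sigma[U_{(123)}](z,\eta) = -\sigma[W_{(123),0}](z,\eta) / (\imath\eta_0).
\]

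Next I expand $\p_{\epsilon_{(1)}}\p_{\epsilon_{(2)}}\p_{\epsilon_{(3)}}\mathscr T(V,\Psi)|_{\epsilon=0}$ using Leibniz. For the Yang--Mills channel, $\tilde V_\beta = \U^{-1}\p_\beta\U + \U^{-1}V_\beta\U$; in all Leibniz partitions, any term containing a factor $W_{(k)}, W_{(kl)}, U_{(k)}$, or $U_{(kl)}$ is smooth at $z$ by the previous step, while the commutator $[A_\beta,U_{(123)}]$ sits at the order of $U_{(123)}$ and is therefore subprincipal relative to $W_{(123)}$. Only two partitions contribute at principal order: $\p_\beta U_{(123)}$ from $\U^{-1}\p_\beta \U$ and $W_{(123),\beta}$ from the middle factor of $\U^{-1}V_\beta \U$. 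Hence
\[
\sigma[\p_{\epsilon_{(1)}}\p_{\epsilon_{(2)}}\p_{\epsilon_{(3)}}\tilde V_\beta|_{0}](z,\eta) = \imath\eta_\beta\,\sigma[U_{(123)}](z,\eta) + \sigma[W_{(123),\beta}](z,\eta) = b_{(0)},
\]
which reduces to $0$ when $\beta=0$, consistent with $\tilde V_0\equiv 0$. For the Higgs channel, $\tilde\Psi = \rho(\U^{-1})\Psi$: the only two candidates for singularity at $(z,\eta)$ are $\Upsilon_{(123)}$ (all three $\epsilon$-derivatives landing on $\Psi$) and $-\rho_*(U_{(123)})\Phi$ (all three on $\rho(\U^{-1})$), and the latter is of the order of $U_{(123)}$, strictly lower than that of $\Upsilon_{(123)}$. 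At principal-symbol level the correction drops out, yielding $\sigma[\Upsilon_{(123)}](z,\eta) = \upsilon_{(0)}$.

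The main obstacle is the order bookkeeping in the Higgs component: the correction $-\rho_*(U_{(123)})\Phi$ is microlocally nonzero at $(z,\eta)$, and it is only harmless because it sits one order below $\Upsilon_{(123)}$ through the inversion of $\p_t$. Verifying that the remaining Leibniz terms (mixed products of lower-order $U$'s and $W$'s) are genuinely smooth at $z$ relies on $z\notin\Gamma_{(k)}$ together with the disjointness conditions $\mho_{(k)}\cap\mathcal J^+(\mho_{(l)})=\emptyset$ and $\hat\mho_{(k)}\cap\Gamma_{(l)}=\emptyset$ already exploited in the proof of Lemma~\ref{lemma : source reduction}.
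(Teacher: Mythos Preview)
Your proposal is correct and follows the paper's approach: pass to temporal gauge so that $\U|_{\epsilon=0}=\id$, show $U_{(k)},U_{(kl)}$ are smooth near $z$ via wavefront/propagation arguments, expand $\mathscr T(V,\Psi)$ by Leibniz, and compute the principal symbol using $\p_t U_{(123)}=-V_{(123),0}$ modulo smooth terms. Your explicit order-counting that $-\rho_*(U_{(123)})\Phi$ and $[A_\beta,U_{(123)}]$ sit one order below $\Upsilon_{(123)}$ and $W_{(123)}$ is in fact a bit more careful than the paper, which writes these subprincipal contributions into its symbol identity \eqref{dotV_YMH_symbol} and then tacitly drops them after the substitution step.
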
    
   \begin{proof}
Recall that $\mathbf{L}(J_1, J_2, J_3, \mathcal{F}) = \mathscr T(V, \Psi)|_\mho$,
where $V$ and $\Psi$ are given by (\ref{W2V}),
and that 
    \begin{align}\label{TVPsi}
\mathscr{T}(V, \Psi) = (\U^{-1}d\U + \U^{-1} V \U, \rho(\U^{-1}) \Psi),
    \end{align}
where $\U$ satisfies
\begin{eqnarray*}\partial_t \U = - V_0 \U, \quad \U|_{t = |x| - 1} = \id.   \end{eqnarray*} 
    By transforming to the temporal gauge, one can always assume that $A|_\mho$ is in the temporal gauge.
   Also due to $V|_{\epsilon=0} = A$, we have that
   $\U|_{\epsilon = 0} = \id$ in $\mho$.
   We write
   \begin{eqnarray*}
   	V_{(k)} = \frac{\partial V}{\partial \epsilon_{(k)}}\bigg|_{\epsilon = 0},
   	&
   	\displaystyle	V_{(kl)} = \frac{\partial^2 V}{\partial \epsilon_{(k)}\partial \epsilon_{(l)}}\bigg|_{\epsilon = 0},
   	&
   	V_{(123)} = \frac{\partial^3 V}{\partial \epsilon_{(1)}\partial \epsilon_{(2)}\partial \epsilon_{(3)}}\bigg|_{\epsilon = 0}, \\ 	\Psi_{(k)} = \frac{\partial \Psi}{\partial \epsilon_{(k)}}\bigg|_{\epsilon = 0},
   	&
   	\displaystyle	\Psi_{(kl)} = \frac{\partial^2 \Psi}{\partial \epsilon_{(k)}\partial \epsilon_{(l)}}\bigg|_{\epsilon = 0},
   	&
   	\Psi_{(123)} = \frac{\partial^3 \Psi}{\partial \epsilon_{(1)}\partial \epsilon_{(2)}\partial \epsilon_{(3)}}\bigg|_{\epsilon = 0},
   \end{eqnarray*} 
and
   \begin{eqnarray*}
   	U_{(k)} = \frac{\partial \U}{\partial \epsilon_{(k)}}\bigg|_{\epsilon = 0},
   	&\displaystyle
   	U_{(kl)} = \frac{\partial^2 \U}{\partial \epsilon_{(k)}\partial \epsilon_{(l)}}\bigg|_{\epsilon = 0},
   &
   	U_{(123)} = \frac{\partial^3 \U}{\partial \epsilon_{(1)}\partial \epsilon_{(2)}\partial \epsilon_{(3)}}\bigg|_{\epsilon = 0},\\ 
   	  	U_{(k)}^{-1} = \frac{\partial \U^{-1}}{\partial \epsilon_{(k)}}\bigg|_{\epsilon = 0},
   	&\displaystyle
   	U_{(kl)}^{-1} = \frac{\partial^2 \U^{-1}}{\partial \epsilon_{(k)}\partial \epsilon_{(l)}}\bigg|_{\epsilon = 0},
   	&
   	U_{(123)}^{-1} = \frac{\partial^3 \U^{-1}}{\partial \epsilon_{(1)}\partial \epsilon_{(2)}\partial \epsilon_{(3)}}\bigg|_{\epsilon = 0}.
   \end{eqnarray*}

Let us show that $U_{(k)}$ is smooth near $z$. As $A_0 = 0$, we have 
    \begin{align*}
\p_t U_{(k)} = -V_{(k),0}.
    \end{align*}
Recall that the wavefront set of $W_{(k)}$ 
is contained in $N^* K_{(k)} \cup C_{(k)}$ 
where $C_{(k)}$ is as in \eqref{def_K}.  
As $A$ is smooth, the same holds for $V_{(k)}$.
The characteristic set $\{\tau = 0\}$ of $\p_t$ 
is disjoint from $N^* K_{(k)} \cup C_{(k)}$, and it follows from 
\cite[Th. 26.1.1]{Hormander-Vol4} that $U_{(k)}$ can be singular only on $K_{(k)}$ or at $x_{(k)}$. In particular, $U_{(k)}$ is smooth near $z$. 

A similar argument shows that $U_{(kl)}$ is smooth near $z$.
Indeed,
    \begin{align*}
\p_t U_{(kl)} = -V_{(kl),0} - V_{(k),0} U_{(l)} - V_{(l),0} U_{(k)}.
    \end{align*}
From the proof of Lemma \ref{lemma : source reduction}
we know that 
    \begin{align*}
\WF(W_{(kl)}) \subset N^* K_{(k)} \cup N^* K_{(l)} \cup N^* (K_{(k)} \cap K_{(l)}) \cup C_{(k)} \cup C_{(l)}.
    \end{align*}
As the point $y$ in \eqref{points_xyz} is outside $\mho$,
we have
    \begin{align*}
\WF(W_{(kl)}|_\mho) \subset N^* K_{(k)} \cup N^* K_{(l)} \cup C_{(k)} \cup C_{(l)},
    \end{align*}
when the microlocal cut-offs in \eqref{eqn : source for StS} are small enough. As above, we see that $V_{(kl),0}$ does not give rise to any singularity in $U_{(kl)}$ near $z$. Arguing as in the proof of Lemma \ref{lemma : source reduction} we also see that 
    \begin{align*}
\WF(V_{(k),0} U_{(l)}|_\mho) \cup \WF(V_{(l),0} U_{(k)}|_\mho)
\subset N^* K_{(k)} \cup N^* K_{(l)} \cup C_{(k)} \cup C_{(l)},
    \end{align*}
and we conclude that $U_{(kl)}$ is smooth near $z$.
As $U_{(k)}$ and $U_{(kl)}$ are smooth near $z$, so are $U_{(k)}^{-1}$ and $U_{(kl)}^{-1}$.

Differentiating \eqref{TVPsi} and writing 
   $$
   (T, \Theta) = \frac{\partial^3 \mathscr T(V,\Psi)}{\partial \epsilon_{(1)}\partial \epsilon_{(2)}\partial \epsilon_{(3)}}\bigg|_{\epsilon = 0}
   $$
we get
   \begin{eqnarray*}
   	T& = &  d U_{(123)} + U^{-1}_{(123)} A +  A U_{(123)} + V_{(123)} +
   	\\ &&\frac 1 2 \sum_{\pi \in S_3} \bigg(   U^{-1}_{(\pi(1) \pi(2))} V_{(\pi(3))}    + U^{-1}_{(\pi(1))} V_{(\pi(2) \pi(3))}    +   V_{(\pi(1) \pi(2))} U_{(\pi(3))} \\& &+    V_{(\pi(1))} U_{(\pi(2) \pi(3))} + U^{-1}_{(\pi(1) \pi(2))} d U_{(\pi(3))} + U^{-1}_{(\pi(1))} d U_{(\pi(2) \pi(3))} \\ && + U^{-1}_{(\pi(1) \pi(2))} A U_{(\pi(3))}  + U^{-1}_{(\pi(1))} A U_{(\pi(2) \pi(3))} \bigg), \\
   	\Theta& = & \rho_\ast(U^{-1}_{(123)}) \Phi +  \Psi_{(123)}  + \frac{1}{2} \sum_{\pi \in S_3} \bigg(	\rho_\ast(U^{-1}_{(\pi(1)\pi(2))}) \Psi_{(\pi(3))} + \rho_\ast(U^{-1}_{(\pi(1))}) \Psi_{(\pi(2)\pi(3))} \bigg),
   \end{eqnarray*}
   where $U_{(123)}$ solves
   \begin{eqnarray*}
   	\p_t U_{(123)} =  - V_{(123),0}  - \frac 1 2 \sum_{\pi \in S_3} \bigg(V_{(\pi(1)\pi(2)),0} U_{(\pi(3))} + V_{(\pi(1)),0} U_{(\pi(2)\pi(3))}\bigg).
   \end{eqnarray*}
   In addition, $\U^{-1} \U = \id$ implies
   $$
   U^{-1}_{(123)} + \frac 1 2 \sum_{\pi \in S_3} \bigg(U^{-1}_{(\pi(1)\pi(2))} U_{(\pi(3))} +  U^{-1}_{(\pi(1))} U_{(\pi(2)\pi(3))} \bigg) + U_{(123)} = 0.
   $$
   Therefore, modulo smooth terms, there holds near $z$
 \begin{equation} 
 \label{dotV_YMH} \left\{ \begin{array}{l}
   	T  =     dU_{(123)}
   	- U_{(123)} A + A U_{(123)} + V_{(123)}, \\  \Theta = \rho_\ast(U^{-1}_{(123)}) \Phi +  \Psi_{(123)},\\
 \p_t U_{(123)} = -V_{(123),0},\\
 U_{(123)}^{-1} = -U_{(123)}.
   \end{array} \right.\end{equation}
Recall that near $z$ it holds that $(V_{(123)}, \Psi_{(123)})$ is a conormal distribution associated to the future flowout of (\ref{flowout_generator}).
   As the flowout is contained in the characteristic set of $\Box$, it is disjoint from the characteristic set $\{\tau = 0\}$ of $\p_t$. The third equation in \eqref{dotV_YMH} implies that $U_{(123)}$ is a conormal distribution associated to the same flowout near $z$.
   Then taking principal symbols in \eqref{dotV_YMH} gives for $\beta = 0,1,2,3$,
 \begin{equation}\label{dotV_YMH_symbol}
\left\{  
\begin{array}{l}
   	\sigma[T_\beta](z, \eta) = \imath \eta_\beta \sigma[U_{(123)}] (z, \eta) + \sigma[W_{(123),\beta}](z, \eta), \\  \sigma[\Theta](z, \eta) = -\sigma[\rho_\ast(U_{(123)})](z, \eta) \Phi +   \sigma[\Upsilon_{(123)}](z, \eta),
   	\\
   	\imath \eta_0  \sigma[U_{(123)}](z, \eta) = - \sigma[W_{(123),0}](z, \eta).
   \end{array} \right. \end{equation}  
  Finally, we conclude the proof by solving for $\sigma[U_{(123)}](z, \eta)$ in the last equation of \eqref{dotV_YMH_symbol}  and substituting in the first two equations of \eqref{dotV_YMH_symbol}.
   \end{proof}
    
\section{Recovery of the broken light ray transform in the adjoint representation}   \label{section:recoveryad}
As pointed out in Subsection \ref{subsection:outline}, we can recover $\mathbf{S}_{z \gets y \gets x}^{A, \Ad}$ by activating only the YM-channel. This strategy will decouple the field $W$ from the field $\Upsilon$ on the principal level such that one can employ the method established in \cite{CLOP2}.
The idea is to choose the source in \eqref{eqn : source for StS} so that $\upsilon_{(k)} = 0$ and $b_{(k)}$ is as in \cite{CLOP2}.
The equation \eqref{comp_equiv} differs by the terms 
$\mathbb J_\rho(\mathcal F, \Upsilon)$ and $\mathbb J_\rho(\mathcal F, \Phi)$ from the analogous equation in \cite{CLOP2}, cf. Lemma 4 there, however, both of them lead to the same equation for the principal symbols
    \begin{align*}
\sigma[\p_{\epsilon_{(k)}} J_0|_{\epsilon = 0}](x_{(k)}, \pm \xi_{(k)}),
\quad k=1,2,3.
    \end{align*}
More precisely, differentiating and computing the principal symbols in \eqref{comp_equiv}, we get
    \begin{align*}
\xi_{(k),0} \sigma[\p_{\epsilon_{(k)}} J_0|_{\epsilon = 0}](x_{(k)}, \pm \xi_{(k)}) = \xi_{(k)}^j b_{(k),j}.
    \end{align*}
Here we used the specific form of the source in \eqref{eqn : source for StS}.
Then, following \cite{CLOP2}, we choose $b_{(k),j} = 0$ for $j=1,3$
and let $b_{(k),2} = b_{(k)} \in \mathfrak g$.
Recalling that $\xi_{(1)}$, $\xi_{(2)}$ and $\xi_{(3)}$ are defined by \eqref{eqn : xi1}, \eqref{eqn : xi2} and \eqref{eqn : xi3},
we see that $J_{(k)}$ in Lemma \ref{lemma : source reduction} satisfies
    \begin{align*}
\sigma[J_{(k)}](x_{(k)}, \pm \xi_{(k)})
= b_{(k)} \omega_{(k)},
    \end{align*}
where
    \begin{align}\label{def_w}
\omega_{(1)} = dx^2,
\quad
\omega_{(k)} = (-1)^k s dx^0 + dx^2,
\quad
k=2,3.
    \end{align}

As the parallel transport equations (\ref{eq:imp})--(\ref{eq:imp2}) for the linearized system 
(\ref{eqn : linearized 1-1})--(\ref{eqn : linearized 1-2})
are of the upper triangular form and as $\upsilon_{(k)} = 0$, we see that 
    \begin{align}\label{symbol_at_y}
\sigma[(W_{(k)}, \Upsilon_{(k)})](y,\pm\xi_{(k)}) = (\alpha_{(k)} \P_{y \gets x_{(k)}}^{A,\Ad} b_{(k)} \omega_{(k)}, 0),
    \end{align}
where $\alpha_{(k)}$ is the same volume factor as in \cite[Section 8.2]{CLOP2}.
As $\Upsilon_{(k)}$ vanishes on the principal level, the nonlinear interaction terms 
    \begin{align*}
N_{(kl)}, \quad
\mathcal N_{(kl)}, \quad
N_{(123)}, \quad
\mathcal N_{(123)}
    \end{align*}
reduce to the same terms as in \cite{CLOP2}.
Taking $b_{(3)} = b_{(2)}$, the proof of  \cite[Proposition 8]{CLOP2} shows that, with the above choice of $J_2$,
    \begin{align*}
\sigma[\partial_{\epsilon_{(1)}}\partial_{\epsilon_{(2)}}\partial_{\epsilon_{(3)}} \mathbf{L}(0, J_2, 0, 0)|_{\epsilon = 0}](z, \eta)
    \end{align*}
determines $\mathbf{S}_{z \gets y \gets x}^{A, \Ad}[b_{(2)}, [b_{(1)}, b_{(2)}]]$. We claim that this is enough to determine 
$\mathbf{S}_{z \gets y \gets x}^{A, \Ad}$. To see this consider the following orthogonal decomposition with respect to the $\text{Ad}$-invariant inner product:
	\begin{align}\label{splitting}
\g=Z(\g)\oplus Z(\g)^{\perp}.
	\end{align}
For brevity we set $\g_{1}=Z(\g)^{\perp}$ and note that $\g_{1}$ is a Lie algebra with trivial centre. From the definitions it is straightforward to check that $\mathbf{S}_{z \gets y \gets x}^{A, \Ad}$ leaves $\g_{1}$ invariant and it is the identity on the centre $Z(\g)$. By \cite[Proposition 9]{CLOP2}, $\mathbf{S}_{z \gets y \gets x}^{A, \Ad}[b_{(2)}, [b_{(1)}, b_{(2)}]]$ determines $\mathbf{S}_{z \gets y \gets x}^{A, \Ad}$ on $\g_1$ and and hence we recover the whole $\mathbf{S}_{z \gets y \gets x}^{A, \Ad}$ as claimed.
We emphasize that even if $\Ad$ is faithful, we will be able to recover $A$ in $\mho$ only up to a gauge since 
we have actually replaced $A$ with its gauge equivalent copy, see the remarks after Lemma \ref{lem_pass_temp_gauge} and compare with Corollary \ref{corollary:rhofaithful}.

 \section{Perturbing coupled fields with abelian sources} \label{section:abelian}

We will now show how the abelian components in the Yang--Mills--Higgs system can be employed to recover $\mathbf{S}_{z \gets y \gets x}^{A, \rho}$ and eventually also $\Phi$.
The computations are first illustrated in the case that $G = U(1)$,
the general case being similar to this.

\subsection{The abelian case}\label{subsec : Maxwell-Higgs} 

As a warm-up let us consider the case that the gauge group $G$ is simply $U(1)$ and that $\dim \mathcal W = 1$. In this case, 
any linear representation is of the form 
    \begin{align}\label{def_rho_abelian}
\rho(e^{\imath \theta}) = e^{\imath n\theta}
    \end{align}
for some $n \in \mathbb Z$. Since we are assuming that $\rho_{*}$ has trivial kernel we must have $n\neq 0$.
We take $n = 1$ for simplicity, that is, $\rho(e^{\imath \theta})$ is multiplication by $e^{\imath \theta}$ on $\mathcal W$.

The gauge fields $A$ are viewed as the electromagnetic fields, and we emphasize their special features by writing $a$ in place of $A$. Note that $\Box_{a,\Ad} = \Box$, and that
the leading interaction terms reduce to 
\begin{align*}
	\tilde{N}_{(kl)} =& - \J_{\rho}(d\Upsilon_{(k)},\Upsilon_{(l)})  - \J_{\rho}(d\Upsilon_{(l)},\Upsilon_{(k)}) \\
	\tilde{\mathcal{N}}_{(kl)} =&  - 2\star(W_{(k)}\wedge\star d\Upsilon_{(l)}) - 2\star (W_{(l)}\wedge\star d\Upsilon_{(k)}), \\
		\tilde{N}_{(123)} =& -\frac{1}{2} \sum_{\pi \in S_3} \bigg( \J_{\rho}(d\Upsilon_{(\pi(1)\pi(2))},\Upsilon_{(\pi(3))})  + \J_{\rho}(d\Upsilon_{(\pi(1))},\Upsilon_{(\pi(2)\pi(3))}) \\
	&    + 2\J_{\rho}(W_{(\pi(1))}\Upsilon_{(\pi(2))},\Upsilon_{(\pi(3))})\bigg) \\
	\tilde{\mathcal{N}}_{(123)} =& -  \sum_{\pi \in S_3} \bigg(
	\star\Big(W_{(\pi(1)\pi(2))}\wedge\star d\Upsilon_{(\pi(3))}\Big)   +  \star\Big(W_{(\pi(1))}\wedge\star d\Upsilon_{(\pi(2)\pi(3))}\Big)\\
	& +  \star\Big(W_{(\pi(1))}\wedge\star W_{(\pi(2))}\Upsilon_{(\pi(3))}\Big)   +   \langle \Upsilon_{(\pi(1))}, \Upsilon_{(\pi(2))} \rangle   \Upsilon_{(\pi(3))} \bigg) .
\end{align*} 

As in the previous section, we want to simplify the computations by choosing some of the source terms in (\ref{eqn : source for StS}) to vanish. Here a convenient choice is to let $b_{(1),j} = 0$, $j=1,2,3$,
and $\upsilon_{(2)} = \upsilon_{(3)} = 0$.
We let $\upsilon_{(1)} \in \mathcal W$, and, similarly to above,  choose 
$b_{(k),j} = 0$ for $j=1,3$, $k=2,3$,
and let $b_{(k),2} = b_{(k)} \in \mathfrak g = \imath \R \setminus 0$ for $k=2,3$.
Then, for $k=2,3$, $J_{(k)}$ in Lemma \ref{lemma : source reduction} satisfies
    \begin{align*}
\sigma[J_{(k)}](x_{(k)}, \xi_{(k)})
= b_{(k)} \omega_{(k)},
    \end{align*}
where, see \eqref{def_w},
    \begin{align*}
\omega_{(k)} = (-1)^k s dx^0 + dx^2,
\quad
k=2,3.
    \end{align*}
To simplify the notation, we write $\omega_{(1)} = 0$ and $b_{(1)} = 0$.

Recall that near the point $y$, $(W_{(k)}, \Upsilon_{(k)})$
is a conormal distribution associated to the manifold $K_{(k)}$
defined by \eqref{def_K}. We will use the product calculus of conormal distributions due to Greenleaf-Uhlmann \cite[Lemma 1.1]{Greenleaf-Uhlmann-CMP-1993} (see also \cite[Lemma 2]{CLOP}) to compute the principal symbols of the leading interaction terms.
For this reason, we need to compute the coefficients $\kappa_{(k)}$ in the splitting
\begin{align*} & \eta = \eta_{(1)} + \eta_{(2)} + \eta_{(3)} \in N_y^\ast K_{(1)} \oplus N_y^\ast K_{(2)} \oplus N_y^\ast K_{(3)},\\ & \eta_{(k)} = \kappa_{(k)} \xi_{(k)} \quad \mbox{for $k = 1, 2, 3$}.\end{align*} 
They are    
   \begin{align*} 
   	\kappa_{(1)} = 1 - \frac{1 + a(r)}{1 - a(s)},
   	\quad
   	\kappa_{(2)} = \frac{1 + a(r)}{2(1 - a(s))} + \frac{1}{2} \frac{r}{s},
   	\quad
   	\kappa_{(3)} = \frac{1 + a(r)}{2(1 - a(s))} - \frac{1}{2} \frac{r}{s}.
   \end{align*}

Analogously to (\ref{symbol_at_y}), using the homogeneity in Proposition \ref{prop_homogeneity}, we have
    \begin{align*}
\sigma[(W_{(k)}, \Upsilon_{(k)})](y, \eta_{(k)}) 
= 
\alpha_{(k)} |\kappa_{(k)}|^{q-1} \mathbf{P}_{y\gets x_{(k)}}^{a,\Phi,\rho} (b_{(k)} \omega_{(k)},\upsilon_{(k)}),
    \end{align*}
where $q$ is the degree of homogeneity of $\sigma[\chi_{(k)}]$ in \eqref{eqn : source for StS}, and the volume factors $\alpha_{(k)}$ satisfy $\alpha_{(k)} \rightarrow \alpha_{(1)}$ for $k = 2, 3$ as $s \rightarrow 0$. See \cite[(65)]{CLOP} for the precise form of $\alpha_{(k)}$. Let us remark that the volume factors are not determined completely by Proposition \ref{prop_homogeneity} since the analysis of the initial condition for the transport equation \eqref{Lie_transport} is not included there. We refer to  \cite[Theorem 3 and appendices]{CLOP} for a detailed discussion of the initial condition. The analysis in \cite{CLOP} is based on using techniques from \cite{Melrose-Uhlmann-CPAM1979}. 

For $k=2,3$, $\upsilon_{(k)} = 0$ and this reduces to 
    \begin{align}\label{propagation_23}
\sigma[(W_{(k)}, \Upsilon_{(k)})](y, \eta_{(k)}) 
= 
\alpha_{(k)} |\kappa_{(k)}|^{q-1} (\omega_{(k)} b_{(k)}, 0),
    \end{align}
where we used the fact that $\mathbf{P}_{y\gets x_{(k)}}^{a,\Ad} b_{(k)} = b_{(k)}$ in the abelian case. 

To absorb all scalar factors, we introduce the notations,
\begin{align}\label{eqn:W_rescaled_symbol}
	\hat W_{(j)}  &=
	(\alpha_{(j)})^{-1} |\kappa_{(j)}|^{1-q} \sigma[W_{(j)}](y, \eta_{(j)}),
	\\	\hat \Upsilon_{(j)}  &=
	(\alpha_{(j)})^{-1} |\kappa_{(j)}|^{1-q} \sigma[\Upsilon_{(j)}](y, \eta_{(j)}). 
\end{align}
Moreover, we denote rescaled multiple fold linearized quantities,
\begin{align*} \begin{cases}
	\hat \bullet_{(kl)} =
  (\alpha_{(kl)})^{-1} |\kappa_{(k)}\kappa_{(l)}|^{1-q} \sigma[\bullet_{(kl)}](y, \eta_{(kl)}),
	\\ 
	\hat \bullet_{(123)} =
	 \alpha^{-1} |\kappa_{(1)}\kappa_{(2)}\kappa_{(3)}|^{1-q} \sigma[\bullet_{(123)}](y, \eta), 
\end{cases}\quad  \mbox{for any $\bullet \in \{W, \Upsilon, \tilde{N}, \tilde{\mathcal{N}}\}$,}\end{align*}
where  $\eta_{(kl)} = \eta_{(k)} + \eta_{(l)}$,
$\alpha_{(kl)} = \alpha_{(k)}\alpha_{(l)}$, and $\alpha = \iota \alpha_{(1)}\alpha_{(2)}\alpha_{(3)}$ with some $\iota \in \mathbb C \setminus 0$.
\HOX{G: I think before we used $\iota$ for $\sqrt{-1}$}
 
As $\mathbf{P}_{y\gets x_{(k)}}^{A,\Phi,\rho}$ is upper triangular, and as $\upsilon_{(2)} = \upsilon_{(3)} = 0$, there holds 
$\hat \Upsilon_{(2)} = \hat \Upsilon_{(3)} = 0$.
This again implies that $\hat{\tilde N}_{(kl)} = 0$ for all distinct $k,l=1,2,3$. 

\HOX{C: I gave up swapping $N$ and $\tilde{N}$. Not only is it dangerous but it also would be inconsistent with the YM paper, the argument in which we referred the reader to numerous times.}
The leading twofold interaction terms in the Higgs channel read 
$$\hat{\tilde{\mathcal{N}}}_{(kl)} = \left\{ \begin{array}{ll}
 	2 \imath \eta_{(1), \alpha}\hat{W}_{(2)}^\alpha \hat{\Upsilon}_{(1)} = - 2 \imath\kappa_{(1)} s b_{(2)} \hat{\Upsilon}_{(1)}  & k, l = 1,2,
 	\\2 \imath\eta_{(1), \alpha}\hat{W}_{(3)}^\alpha \hat{\Upsilon}_{(1)} = + 2 \imath\kappa_{(1)} s b_{(3)} \hat{\Upsilon}_{(1)} & k, l = 1, 3,
 	\\	0 & \mbox{otherwise}.
 \end{array}  \right.$$
Here we used 
    \begin{align}\label{eta_omega_12_13}
\eta_{(1), \alpha}\omega_{(2)}^\alpha 
= \kappa_{(1)} \xi_{(1), \alpha}\omega_{(2)}^\alpha 
= - \kappa_{(1)} s,
\quad
\eta_{(1), \alpha}\omega_{(3)}^\alpha 
= \kappa_{(1)} \xi_{(1), \alpha}\omega_{(3)}^\alpha 
= \kappa_{(1)} s.
    \end{align}
 
As $\hat{\tilde{\mathcal{N}}}_{(12)} \ne 0$ 
the remainder term $\mathcal R_{(12)}$ 
in (\ref{NR_kl}) does not affect the principal symbol
of $\mathcal{N}_{(12)}$ in a conical neighbourhood of $(y, \eta_{(12)})$.
Considering the principal symbols of $W_{(kl)}$ and $\Upsilon_{(kl)}$
on the same level, in other words, 
viewing them as components of 
    \begin{align*}
\sigma[(W_{(kl)}, \Upsilon_{(kl)})_{k,l=1,2,3}],
    \end{align*}
it follows that $\hat{W}_{(kl)} = 0$ and 
\HOX{L: what happens to $\imath$?
	
C : I no longer follow the convention in the YM paper but removed $1/\imath$ or $1/\imath^2$ in $\hat{\bullet}$.}
\begin{align*}  \hat{\Upsilon}_{(kl)} &= \left\{ \begin{array}{ll}  -2 \imath \kappa_{(1)} s \sigma[\Box]^{-1} (y, \eta_{(21)})  b_{(2)} \hat{\Upsilon}_{(1)}  & k, l = 1,2,
 		\\  + 2 \imath \kappa_{(1)} s \sigma[\Box]^{-1} (y, \eta_{(31)}) b_{(3)} \hat{\Upsilon}_{(1)} & k, l = 1, 3,
 		\\	0 & \mbox{otherwise}.
 	\end{array}  \right.
\end{align*}
Note that since $\eta_{(k)}$ and $\eta_{(l)}$ are both light-like, $\eta_{(kl)}$ cannot be light-like, i.e. 
    \begin{align*}
\sigma[\Box](y, \eta_{(kl)}) \neq 0.
    \end{align*}

Let us show next that $\hat{\tilde{N}}_{(123)} = 0$.
Note that $\hat \Upsilon_{(\pi(1)\pi(2))} \ne 0$
implies that $\pi(1) = 1$ or $\pi(3) = 1$.
But then $\hat \Upsilon_{(\pi(3))} = 0$. 
Thus the first term in $\hat{\tilde{N}}_{(123)}$ is zero.
The second term is zero for the same reason. 
Finally, the third term vanishes since $\hat \Upsilon_{(\pi(2))} = 0$
or $\hat \Upsilon_{(\pi(3))} = 0$.

We turn to $\hat{\tilde{\mathcal N}}_{(123)}$.
As $\hat W_{(kl)} = 0$ for all distinct $k,l = 1,2,3$,
the first term in the sum in $\hat{\tilde{\mathcal N}}_{(123)}$
vanishes. Also the last term vanishes since $\hat \Upsilon_{(2)} = \hat \Upsilon_{(3)} = 0$. We are left with
    \begin{align*}
\hat{\tilde{\mathcal N}}_{(123)}
&=
\hat{W}_{(2)}^\alpha \imath \eta_{(13), \alpha} \hat{\Upsilon}_{(13)} 
+ \hat{W}_{(3)}^\alpha \imath \eta_{(12), \alpha} \hat{\Upsilon}_{(12)} 
+2 \hat{W}_{(2)}^\alpha \hat{W}_{(3), \alpha}\hat{\Upsilon}_{(1)}
\\&=
( 
-2 \imath \kappa_{(1)} (\kappa_{(1)} + 2 \kappa_{(3)}) s^2 \sigma[\Box]^{-1} (y, \eta_{(13)})
- 2 \imath \kappa_{(1)} (\kappa_{(1)} + 2 \kappa_{(2)}) s^2
\sigma[\Box]^{-1} (y, \eta_{(12)})
\\&\qquad + 2 (1 + s^2)
) b_{(2)} b_{(3)} \hat \Upsilon_{(1)},
    \end{align*}
where we used (\ref{eta_omega_12_13}) and 
    \begin{align*}
\eta_{(2), \alpha}\omega_{(3)}^\alpha 
= 2\kappa_{(2)} s,
\quad
\eta_{(3), \alpha}\omega_{(2)}^\alpha 
= -2\kappa_{(3)} s,
\quad
\omega_{(2), \alpha}\omega_{(3)}^\alpha 
= 1 + s^2.
    \end{align*}
Using also 
    \begin{align*}
\sigma[\Box]^{-1}(y, \eta_{(1 k)}) = 
\frac{1}{2(a(r) + a(s)) \kappa_{(k)}}, \quad
k = 2,3,
    \end{align*}
we can verify that
    \begin{align*}
\hat{\tilde{\mathcal N}}_{(123)}
= 2 b_{(3)} b_{(2)} \hat{\Upsilon}_{(1)} + \mathcal O(s).
    \end{align*}
This quantity is non-vanishing for small enough $s$,
and it is straightforward to verify that the remainder terms
in (\ref{NR_123})
do not affect the principal symbol
$$
\sigma[(N_{(123)}, \mathcal N_{(123)})]
$$
in a conical neighbourhood of $(y, \eta)$.
To summarize, taking the limit $s \rightarrow 0$, there holds 
\begin{align}\label{threefold_symbol_at_y}
\lim_{s \to 0} (\hat{W}_{(123)}, \hat{\Upsilon}_{(123)})  =  \left(0,  2 b_{(3)} b_{(2)}\hat{\Upsilon}_{(1)}     \right).  
\end{align}

Recall that $(W_{(123)},\Upsilon_{(123)})$ are conormal distributions associated to the future flowout from \eqref{flowout_generator}. As the linear parts of \eqref{eqn : linearized 3-1}--\eqref{eqn : linearized 3-2} and \eqref{eqn : linearized 1-1}--\eqref{eqn : linearized 1-2}
coincide, modulo zeroth order terms, 
the principal symbol of $(W_{(123)}, \Upsilon_{(123)})$ is also propagated by 
$\P_{z \gets y}^{a, \Phi, \rho}$.
We omit again the detailed description of the volume factor $\alpha_{(0)} \in \C \setminus \{0\}$ below, and refer to \cite{CLOP} for details. We have
\HOX{Is there really 2 here? Where does - come from? A power of $\imath$ perhaps?

C: Fixed. Since only the $(1 + 1 + 1)$-fold terms matter, there should be no $\imath$.}
 \begin{align} 
\label{eqn : broken lightray EM}  \lim_{s \to 0} C_\alpha \sigma[W_{(1 2 3)}](z, \eta)  &=	  C_b \left(\P_{z\gets y}^{a, \Phi, \rho}\right)_{12} \P_{y\gets x }^{a, \rho} \upsilon_{(1)},
 	\\
\label{eqn : broken lightray H abelian}  \lim_{s \to 0} C_\alpha \sigma[\Upsilon_{(1 2 3)}](z, \eta)  &=    C_b     \P_{z \gets y}^{a, \rho} \P_{y \gets x}^{a, \rho} \upsilon_{(1)},  \end{align}  
where we employ the following shorthands  
\begin{align*}&C_\alpha = \alpha_{(0)}^{-1}   \alpha^{-1} |\kappa_{(1)}\kappa_{(2)}\kappa_{(3)}|^{1-q}, \quad  C_b  = 2 b_{(3)} b_{(2)}.\end{align*}  
Here $C_b \ne 0$ as $b_{(k)} \ne 0$ for $k=2,3$.
We can read from  \eqref{eqn : broken lightray H abelian} the broken light ray transform $\mathbf{S}^{a, \rho}_{z \gets y \gets x} \upsilon_{(1)} = \P_{z \gets y}^{a, \rho} \P_{y \gets x}^{a, \rho} \upsilon_{(1)}$.

The assumption $Z(\mathfrak{g}) \cap \ker \rho_\ast = \{0\}$ implies $\rho_\ast$ is faithful in the abelian case, hence Corollary \ref{corollary:rhofaithful} recovers $a$ from $\mathbf{S}^{a, \rho}_{z \gets y \gets x}$ up to a gauge transformation $\U$ satisfying $\U|_\mho = \id$. (But the gauge is not fully fixed in $\mho$ as we have already replaced $a$ with its gauge equivalent copy, see the remarks after Lemma \ref{lem_pass_temp_gauge}.)

As 
$\left(\P_{z\gets y}^{a, \Phi, \rho}\right)_{12} \P_{y\gets x }^{a, \rho} = \left(\P_{z\gets y}^{(a, \Phi) \cdot \U, \rho}\right)_{12} \P_{y\gets x }^{a \cdot \U, \rho}$
whenever $\U$ satisfies $\U|_\mho = \id$, 
we may fix a gauge equivalent copy $a \cdot \U$ of $a$, and replace $(a, \Phi)$ by $(a, \Phi) \cdot \U$.
In other words, as we know the gauge orbit of $a$ and $\left(\P_{z\gets y}^{a, \Phi, \rho}\right)_{12} \P_{y\gets x }^{a, \rho}$ depends only on the orbit, we simply choose one point on the orbit.
We will proceed to compute the corresponding $\Phi$.

As we can choose $\P_{y \gets x_{(1)}}^{a, \rho} \upsilon_{(1)}$  arbitrarily in virtue of the surjectivity of $\P_\gamma^{a, \rho}$, it suffices to reconstruct $\Phi$ from the knowledge of $\left(\P_{z\gets y}^{a, \Phi, \rho}\right)_{12}$.  
Recall the definition of  $\left(\P_{z\gets y}^{a, \Phi, \rho}\right)_{12}$, \[ \left(\P_{z\gets y}^{a, \Phi, \rho}\right)_{12}(v) =  \frac12 \P_{z \gets y}^{a, \Ad} \int_{t_y}^{t_z} \J_\rho(\dot{\gamma}_\beta(s) v, \rho(\U_\gamma^a(s))^{-1}\Phi(\gamma(s))) \, ds,\] where $\gamma$ is the parametrization of the line satisfying $\gamma(t_y) = y$ and $\gamma(t_z) = z$.
As $\P_{z \gets y}^{a, \Ad}$ is the identity map, and as $\dot \gamma$ is a constant, one can determine the quantity $$\dot{\gamma}_\beta \J_\rho\bigg(v, \int_{t_y}^{t_z}\rho(\U_\gamma^a(s))^{-1}\Phi(\gamma(s))\, ds\bigg).$$ Then the non-degeneracy of $\J_\rho$ further gives the integral \[\mathcal{I} = \int_{t_y}^{t_z}\rho(\U_\gamma^a(s))^{-1}\Phi(\gamma(s))\, ds.\] 
Finally, we recover
    \begin{align*}
\Phi(\gamma(t_y)) = - \rho(\U_\gamma^a(t_y)) \partial_{t_y} \mathcal{I}.
    \end{align*}

We emphasize that 
by Proposition \ref{prop:from StS to linearized waves} the symbols
\eqref{eqn : broken lightray EM} and \eqref{eqn : broken lightray H abelian}
are given by the threefold linearization of the source-to-solution map $\mathbf{L}$, which is determined by the data set $\mathcal{D}_{(a, \Phi)}$ as is proven in Proposition \ref{prop : source-to-solution map}.
In summary, we have proved the following special case of Theorem \ref{thm:main thm}
\begin{proposition}\label{prop:emh}
Suppose that $G = U(1)$, $\dim \mathcal W = 1$ and that \eqref{def_rho_abelian} holds with $n=1$.
If $(a, \Phi)$ and $(b, \Xi)$ 
satisfy \eqref{eq:ymh1} and \eqref{eq:ymh2}
in $\mathbb{D}$, then  $$\mathcal{D}_{(a, \Phi)} = \mathcal{D}_{(b, \Xi)} \Longleftrightarrow (a, \Phi) \sim (b, \Xi) \, \mbox{in $\mathbb{D}$}.$$
\end{proposition}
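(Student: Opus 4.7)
The ``if'' direction is immediate from the gauge invariance of the Yang--Mills--Higgs system together with the definition of $\mathcal D_{(a,\Phi)}$, so the plan focuses on the ``only if'' implication, and proceeds by chaining together the results already developed in this section.

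First I would apply Proposition \ref{prop : source-to-solution map} to pass from $\mathcal D_{(a,\Phi)} = \mathcal D_{(b,\Xi)}$ to equality of the associated source-to-solution maps near the zero source. Then, for an arbitrary triple $(x,y,z) \in \mathbb S^+(\mho)$ in the normalized form \eqref{eqn : xi1}--\eqref{eqn : xi3}, I would feed in the family of sources \eqref{eqn : source for StS} with the distinguished choice $b_{(1)} = 0$, $\upsilon_{(2)} = \upsilon_{(3)} = 0$, and $b_{(2)}, b_{(3)} \in \imath\R \setminus 0$, $\upsilon_{(1)} \in \mathcal W$ arbitrary, and apply Proposition \ref{prop:from StS to linearized waves} to read off $\sigma[W_{(123)}](z,\eta)$ and $\sigma[\Upsilon_{(123)}](z,\eta)$.

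The microlocal computation already carried out in Section \ref{subsec : Maxwell-Higgs} (culminating in \eqref{eqn : broken lightray EM}--\eqref{eqn : broken lightray H abelian}) then identifies these symbols, after taking $s \to 0$ and normalizing the scalar factors, with $\mathbf S^{a,\rho}_{z \gets y \gets x} \upsilon_{(1)}$ and with $\bigl(\P^{a,\Phi,\rho}_{z \gets y}\bigr)_{12} \P^{a,\rho}_{y \gets x} \upsilon_{(1)}$. Since $n \neq 0$, the representation $\rho$ is fully charged and $\rho_*$ is faithful, so Corollary \ref{corollary:rhofaithful} recovers $a$ from the first symbol up to a gauge transformation $\U \in G^0(\mathbb D, p)$ satisfying $\U|_\mho = \id$; after replacing $(b,\Xi)$ by $(b,\Xi) \cdot \U^{-1}$, I may assume $a = b$ on $\mathbb D$ and use the gauge representative already fixed in $\mho$ by Lemma \ref{lem_pass_temp_gauge}.

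With $a$ now known, $\P^{a,\rho}_{y \gets x}$ and $\P^{a,\Ad}_{z \gets y}$ are determined, and surjectivity of the parallel transport together with arbitrariness of $\upsilon_{(1)}$ lets me read the full operator $\bigl(\P^{a,\Phi,\rho}_{z \gets y}\bigr)_{12}$ from the second symbol. The non-degeneracy of $\J_\rho$, which is equivalent to $\rho$ being fully charged, then allows me to extract the integral $\mathcal I = \int_{t_y}^{t_z} \rho(\U^a_\gamma(s))^{-1} \Phi(\gamma(s))\, ds$ along the lightlike segment from $y$ to $z$; differentiating in $t_y$ and conjugating by $\rho(\U^a_\gamma(t_y))$ recovers $\Phi(y)$ pointwise. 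Since every $y \in \mathbb D \setminus \mho$ is the break point of some triple in $\mathbb S^+(\mho)$, this determines $\Phi$ on all of $\mathbb D$, matching $\Xi$ in the chosen gauge. The one nontrivial point in executing this plan is to confirm that the quadratic remainders $\mathcal R_{(kl)}$ and $\mathcal R_{(123)}$ in \eqref{NR_kl}--\eqref{NR_123} do not pollute the principal symbol calculation, but this is already forced by the vanishing $\hat\Upsilon_{(2)} = \hat\Upsilon_{(3)} = \hat W_{(kl)} = 0$ that the source choice manufactures.
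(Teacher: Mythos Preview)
Your proposal is correct and follows essentially the same approach as the paper: the proposition is stated in the paper as a summary of the computation carried out in Section~\ref{subsec : Maxwell-Higgs}, and your proof sketch recapitulates exactly that chain of reductions (data $\to$ source-to-solution map via Proposition~\ref{prop : source-to-solution map}, threefold symbols via Proposition~\ref{prop:from StS to linearized waves}, the specific source choice leading to \eqref{eqn : broken lightray EM}--\eqref{eqn : broken lightray H abelian}, recovery of $a$ via Corollary~\ref{corollary:rhofaithful}, and recovery of $\Phi$ by differentiating the weighted integral $\mathcal I$). Your closing remark about the remainders $\mathcal R_{(kl)}$, $\mathcal R_{(123)}$ not polluting the principal symbol is also the same point the paper makes in passing.
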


  \subsection{Completion of the recovery}\label{subsec : proof of main theorem}

We will now consider the case of general $G$ and $\rho$
satisfying the assumptions of Theorem \ref{thm:main thm}.
Let us follow the computation in the previous section as closely as possible, and choose again sources so that $b_{(1),j} = 0$, $j=1,2,3$,
and $\upsilon_{(2)} = \upsilon_{(3)} = 0$.
We let $\upsilon_{(1)} \in \mathcal W$, and 
$b_{(k),j} = 0$ for $j=1,3$, $k=2,3$.
Finally, the key idea is to take $b_{(k),2}$, $k=2,3$, in the center $Z(\mathfrak g)$, that is, we let $b_{(k),2} = b_{(k)} \in Z(\mathfrak g)$ be nonzero for $k=2,3$. Here we are using the assumption that $Z(\mathfrak g)$ is non-trivial.

Now $\upsilon_{(k)} = 0$ and $b_{(k)} \in Z(\mathfrak g)$ for $k=2,3$
imply again \eqref{propagation_23}. 
In particular, using the notation from the previous section,
$\hat W_{(k)} \in Z(\mathfrak g)$ for $k=2,3$.
This again implies that the commutator terms in 
$\hat{\tilde N}_{(kl)}$ vanish since at least one of the factors in each commutator is in the abelian component. 
The other terms vanish as well by the argument in the previous section, and we have $\hat{\tilde N}_{(kl)} = 0$ for all distinct $k,l = 1,2,3$.
Furthermore, $\hat{\tilde{\mathcal N}}_{(kl)}$ does not contain any commutator terms in the first place, and it is treated as in the previous section. 

We turn to the there-fold interaction terms. 
The first four commutator terms in $\hat{\tilde N}_{(123)}$
contain two-fold interaction terms, and they vanish since $\hat{\tilde N}_{(kl)} = 0$. Also, the final commutator term vanishes since $\hat W_{(k)} \in Z(\mathfrak g)$ for $k=2,3$.
Thus we are reduced to the same interactions as in the previous section, and see that \eqref{threefold_symbol_at_y} holds.
Hence 
 \begin{align*} 
 \lim_{s \to 0} C_\alpha \sigma[W_{(1 2 3)}](z, \eta)  &=	   C_b \left(\P_{z\gets y}^{A, \Phi, \rho}\right)_{12} \P_{y\gets x}^{A, \rho} \upsilon_{(1)},
\\
\notag  \lim_{s \to 0} C_\alpha \sigma[\Upsilon_{(1 2 3)}](z, \eta)  &=      C_b     \P_{z \gets y}^{A, \rho} \P_{y \gets x}^{A, \rho} \upsilon_{(1)},  \end{align*} where the shorthand $C_b$ and $C_\alpha$ are defined as in \eqref{eqn : broken lightray EM}--\eqref{eqn : broken lightray H abelian}. 
Here we used the fact that $b_{(2)}$ and $b_{(3)}$, contained in $C_b$, are in $Z(\g)$. 
It follows from fully chargedness that $C_b$ is non-singular, and we recover $\mathbf S_{z \gets y \gets x}^{A, \rho}$ as before.

Recall that we have already determined $\mathbf S_{z \gets y \gets x}^{A, \Ad}$, and hence we know $\mathbf S_{z \gets y \gets x}^{A, \Ad \oplus \rho}$. The assumption $Z(\g) \cap \mathrm{Ker} \rho_* = \{0\}$
implies that $\Ad \oplus \rho$ is faithful,
and $A$ is determined, up to a gauge transformation, due to Corollary \ref{corollary:rhofaithful}.
Finally, $\Phi$ is recovered analogously to the previous section using the fact that $\rho_{*}$ is fully charged (which is equivalent to $\J_{\rho}$ being non-degenerate).
This completes the proof of Theorem \ref{thm:main thm}.
	
	\bigskip
	
	\noindent {\bf Acknowledgements.} XC was supported by Natural Science Foundation of Shanghai grant 23JC1400501.   ML was supported by Reseach Council of Finland grants
	320113 and 312119.
	LO was supported by EPSRC grants EP/P01593X/1 and EP/R002207/1, by the ERC grant 101086697 (LoCal), and the Reseach Council of Finland, grants 347715 and 353096. 
	XC and GPP were supported by EPSRC grant EP/R001898/1. 
Views and opinions expressed are those of the authors only and do not necessarily reflect those of the European Union or the other funding organizations.

	\bigskip	\noindent {\bf Data Availability Statement.} Data sharing not applicable to this article as no datasets were generated or analysed during the current study.
	
	\bigskip	\noindent {\bf Conflict of Interest.} The authors have no conflicts of interest to declare that are relevant to the content of this article.

\bibliographystyle{abbrv}
\bibliography{main}

\begin{thebibliography}{10}

\bibitem{Alinhac1983}
S.~Alinhac.
\newblock Non-unicit\'e du probl\`eme de {C}auchy.
\newblock {\em Ann. of Math. (2)}, 117(1):77--108, 1983.

\bibitem{Cekic2017a}
M.~Ceki\'{c}.
\newblock Calder\'{o}n problem for connections.
\newblock {\em Comm. Partial Differential Equations}, 42(11):1781--1836, 2017.

\bibitem{Cekic2020}
M.~Ceki\'{c}.
\newblock Calder\'{o}n problem for {Y}ang-{M}ills connections.
\newblock {\em J. Spectr. Theory}, 10(2):463--513, 2020.

\bibitem{CLOP}
X.~Chen, M.~Lassas, L.~Oksanen, and G.~Paternain.
\newblock Detection of {H}ermitian connections in wave equations with cubic
  non-linearity.
\newblock {\em J. Eur. Math. Soc. (JEMS)}, 24(7):2191--2232, 2022.

\bibitem{CLOP2}
X.~Chen, M.~Lassas, L.~Oksanen, and G.~P. Paternain.
\newblock Inverse problem for the {Y}ang-{M}ills equations.
\newblock {\em Comm. Math. Phys.}, 384(2):1187--1225, 2021.

\bibitem{Duistermaat-Hormander-FIO2}
J.~J. Duistermaat and L.~H\"{o}rmander.
\newblock Fourier integral operators. {II}.
\newblock {\em Acta Math.}, 128(3-4):183--269, 1972.

\bibitem{Feizmohammadi2020}
A.~Feizmohammadi and L.~Oksanen.
\newblock An inverse problem for a semi-linear elliptic equation in
  {R}iemannian geometries.
\newblock {\em J. Differential Equations}, 269(6):4683--4719, 2020.

\bibitem{Feizmohammadi2022}
A.~Feizmohammadi and L.~Oksanen.
\newblock Recovery of zeroth order coefficients in non-linear wave equations.
\newblock {\em J. Inst. Math. Jussieu}, 21(2):367--393, 2022.

\bibitem{Greenleaf-Uhlmann-CMP-1993}
A.~Greenleaf and G.~Uhlmann.
\newblock Recovering singularities of a potential from singularities of
  scattering data.
\newblock {\em Comm. Math. Phys.}, 157(3):549--572, 1993.

\bibitem{Ha}
M.~J.~D. Hamilton.
\newblock {\em Mathematical gauge theory}.
\newblock Universitext. Springer, Cham, 2017.
\newblock With applications to the standard model of particle physics.

\bibitem{Hormander-Vol4}
L.~H\"{o}rmander.
\newblock {\em The analysis of linear partial differential operators. {IV}},
  volume 275 of {\em Grundlehren der Mathematischen Wissenschaften}.
\newblock Springer-Verlag, Berlin, 1985.

\bibitem{H1}
L.~H\"{o}rmander.
\newblock {\em The analysis of linear partial differential operators. {I}}.
\newblock Springer Study Edition. Springer-Verlag, Berlin, second edition,
  1990.

\bibitem{KLOU}
Y.~Kurylev, M.~Lassas, L.~Oksanen, and G.~Uhlmann.
\newblock Inverse problem for {Einstein}-scalar field equations.
\newblock {\em Duke Math. J.}, 171(16):3215--3282, 2022.

\bibitem{KLU}
Y.~Kurylev, M.~Lassas, and G.~Uhlmann.
\newblock Inverse problems for {L}orentzian manifolds and non-linear hyperbolic
  equations.
\newblock {\em Invent. Math.}, 212(3):781--857, 2018.

\bibitem{KOP}
Y.~Kurylev, L.~Oksanen, and G.~P. Paternain.
\newblock Inverse problems for the connection {L}aplacian.
\newblock {\em J. Differential Geom.}, 110(3):457--494, 2018.

\bibitem{Matti}
M.~Lassas.
\newblock Inverse problems for linear and non-linear hyperbolic equations.
\newblock In {\em Proceedings of the {I}nternational {C}ongress of
  {M}athematicians---{R}io de {J}aneiro 2018. {V}ol. {IV}. {I}nvited lectures},
  pages 3751--3771. World Sci. Publ., Hackensack, NJ, 2018.

\bibitem{Lassas2021}
M.~Lassas, T.~Liimatainen, Y.-H. Lin, and M.~Salo.
\newblock Inverse problems for elliptic equations with power type
  nonlinearities.
\newblock {\em J. Math. Pures Appl. (9)}, 145:44--82, 2021.

\bibitem{Lassas2017a}
M.~Lassas, G.~Uhlmann, and Y.~Wang.
\newblock Determination of vacuum space-times from the {E}instein-{M}axwell
  equations.
\newblock {\em Preprint arXiv:1703.10704}.

\bibitem{LUW}
M.~Lassas, G.~Uhlmann, and Y.~Wang.
\newblock Inverse problems for semilinear wave equations on {L}orentzian
  manifolds.
\newblock {\em Comm. Math. Phys.}, 360(2):555--609, 2018.

\bibitem{Melrose-Uhlmann-CPAM1979}
R.~B. Melrose and G.~A. Uhlmann.
\newblock Lagrangian intersection and the {C}auchy problem.
\newblock {\em Comm. Pure Appl. Math.}, 32(4):483--519, 1979.

\bibitem{Oksanen2020}
L.~Oksanen, M.~Salo, P.~Stefanov, and G.~Uhlmann.
\newblock Inverse problems for real principal type operators.
\newblock {\em Am. J. Math.}, 146(1):161--240, 2024.

\bibitem{Tataru}
D.~Tataru.
\newblock Unique continuation for solutions to {PDE}'s; between
  {H}\"{o}rmander's theorem and {H}olmgren's theorem.
\newblock {\em Comm. Partial Differential Equations}, 20(5-6):855--884, 1995.

\bibitem{Tetlow2022}
A.~Tetlow.
\newblock Recovery of a time-dependent {H}ermitian connection and potential
  appearing in the dynamic {S}chr\"{o}dinger equation.
\newblock {\em SIAM J. Math. Anal.}, 54(2):1347--1369, 2022.

\bibitem{Tong2017}
D.~Tong.
\newblock Line operators in the {S}tandard {M}odel.
\newblock {\em J. High Energy Phys.}, (7):104, front matter+14, 2017.

\bibitem{Uhlmann2018}
G.~Uhlmann and Y.~Wang.
\newblock Determination of space-time structures from gravitational
  perturbations.
\newblock {\em Comm. Pure Appl. Math.}, 73(6):1315--1367, 2020.

\end{thebibliography}

\ifoptionfinal{}{
}

\end{document}